\newcommand\cD{{\mathcal D}}
\newcommand\cF{{\mathcal F}}
\newcommand\cL{{\mathcal L}}
\newcommand\cO{{\mathcal O}}
\newcommand\cP{{\mathcal P}}
\newcommand\cX{{\mathcal X}}
\newcommand{\ka}{\mathfrak{a}}
\newcommand{\kb}{\mathfrak{b}}
\newcommand{\codim}{{\rm codim}}
\newcommand{\ch}{\mathrm{ch}}
\newcommand{\Td}{\mathrm{Td}}
\newcommand{\Vol}{\mathrm{Vol}}
\newcommand{\wvol}{\widetilde{\mathrm{Vol}}}
\newcommand{\ord}{\mathrm{ord}}
\newcommand{\Bc}{\mathrm{Bc}}
\newcommand{\wBc}{\widetilde{\Bc}}
\newcommand{\Val}{\mathrm{Val}}
\newcommand{\lct}{\mathrm{lct}}
\newcommand{\Hom}{\mathrm{Hom}}
\newcommand{\ssa}{{\mathfrak a}}
\newcommand{\ssb}{{\mathfrak b}}
\newcommand{\CC}{\mathbb {C}}
\newcommand{\FF}{{\mathbb F}}
\newcommand{\NN}{{\mathbb N}}
\newcommand{\PP}{{\mathbb P}}
\newcommand{\QQ}{{\mathbb Q}}
\newcommand{\RR}{{\mathbb R}}
\newcommand{\ZZ}{{\mathbb Z}}
\DeclareMathOperator{\Int}{int}
\DeclareMathOperator{\Aut}{Aut}
\DeclareMathOperator{\Ver}{Ver}
\DeclareMathOperator{\Proj}{Proj}
\DeclareMathOperator{\Ric}{Ric}
\DeclareMathOperator{\argmin}{argmin}
\DeclareMathOperator{\Fut}{Fut}
\theoremstyle{plain}
\newtheorem{theorem}{Theorem}[section]
\newtheorem{proposition}[theorem]{Proposition}
\newtheorem{lemma}[theorem]{Lemma}
\newtheorem{claim}[theorem]{Claim}
\newtheorem{corollary}[theorem]{Corollary}
\newtheorem{conjecture}[theorem]{Conjecture}
\newtheorem{problem}[theorem]{Problem}
\theoremstyle{definition}
\newtheorem{definition}[theorem]{Definition}
\newtheorem{remark}[theorem]{Remark}
\newtheorem{example}[theorem]{Example}
\def\K{K\"ahler }
\def\KE{K\"ahler--Einstein }
\newcommand{\beq}{\begin{equation}}
\newcommand{\eeq}{\end{equation}}
\newcommand{\bpf}{\begin{proof}}
\newcommand{\epf}{\end{proof}}
\newcommand{\bdefn}{\begin{definition}}
\newcommand{\edefn}{\end{definition}}
\newcommand{\bremark}{\begin{remark}}
\newcommand{\eremark}{\end{remark}}
\newcommand{\bconj}{\begin{conjecture}}
\newcommand{\econj}{\end{conjecture}}
\newcommand{\bcor}{\begin{corollary}}
\newcommand{\ecor}{\end{corollary}}
\newcommand{\blem}{\begin{lemma}}
\newcommand{\elem}{\end{lemma}}
\newcommand{\bclaim}{\begin{claim}}
\newcommand{\eclaim}{\end{claim}}
\newcommand{\bprob}{\begin{problem}}
\newcommand{\eprob}{\end{problem}}
\newcommand{\bprop}{\begin{proposition}}
\newcommand{\eprop}{\end{proposition}}
\newcommand{\bthm}{\begin{theorem}}
\newcommand{\ethm}{\end{theorem}}
\def\lb#1{\label{#1}}
\def\ra{\rightarrow}
\def\q{\quad}
\font\itten=cmti10
\title{Asymptotics of quantized barycenters of lattice polytopes \\
with applications to algebraic geometry
}
\author{Chenzi Jin, Yanir A. Rubinstein
\\
\\
{\itten with an appendix by
 Yaxiong Liu}
 \\ \\
 }
\date{June 2024}
\begin{document}

\maketitle

\begin{abstract}
This article addresses a combinatorial problem with
applications to algebraic geometry.
To a convex lattice polytope $P$ and each of its integer dilations
$kP$
one may associate the barycenter
of its lattice points. This sequence of $k$-quantized
barycenters converge to the
(classical) barycenter of the polytope considered as a convex body. 
A basic question arises: is there a complete asymptotic expansion
for this sequence? If so, what are its terms?
This article initiates the study of this question. 
First, we establish the existence of such an expansion
as well as determine the first two terms.
Second, for Delzant lattice polytopes we use toric algebra
to determine all terms using mixed volumes of virtual
rooftop polytopes, or alternatively in terms of higher
Donaldson--Futaki invariants. Third, for reflexive polytopes we show
the quantized barycenters are colinear to first order,
and actually colinear in the case of polygons. 
The proofs use Ehrhart theory, convexity arguments, and toric algebra.
As applications we derive the complete asymptotic expansion
of the Fujita--Odaka stability thresholds $\delta_k$ on arbitrary
polarizations on (possibly singular) toric varieties.
In fact, we show they are rational functions of $k$ for sufficiently large $k$. 
This gives the first
general result on Tian's stabilization problem for $\delta_k$-invariants for (possibly singular) toric Fanos: 
$\delta_k$ stabilize in $k$ if and only if they are all equal to $1$, and when smooth if and only if asymptotically Chow semistable (a condition stronger
than existence of a K\"ahler--Einstein metric).
We also relate
the asymptotic expansions to higher Donaldson--Futaki invariants
of test configurations motivated by Ehrhart theory, and unify in passing
previous results of Donaldson, Futaki, Ono, Sano, and Rubinstein--Tian--Zhang
on existence of canonical K\"ahler metrics, obstructions, 
and stability thresholds.

\end{abstract}

\tableofcontents

\section{Introduction}
\label{Introduction}

\subsection{A combinatorial problem}

This article initiates the study of a combinatorial problem:
{\it what is the asymptotic behavior of the quantized (or discrete) barycenters of lattice polytopes?}

To state this more precisely, let 
$$M\cong\ZZ^n$$ 
denote a lattice and $P\subset M\otimes_\ZZ\RR\cong\RR^n$
denote a convex lattice polytope, i.e., the vertex set
satisfies $\Ver P\subset M$. 
The {\it $k$-th quantized barycenter} (or $k$-th discrete barycenter) of $P$
is defined as the average of the lattice points in $kP$ divided by $k$:
$$
\Bc_k(P):=\frac1{k|kP\cap M|}\sum_{u\in kP\cap M}u, \q k\in\NN.
$$
The barycenter of $P$ 
is the first moment of the uniform probability measure on $P$,
considered as a convex body in 
$M_\RR:=M\otimes_\ZZ\RR\cong\RR^n$, or
$$
\Bc(P):=\frac1{|P|}\int_Pxdx\in M_\RR.
$$
The empirical measures $\frac1{|kP\cap M|}\sum_{u\in P\cap k^{-1}M}\delta_u$
converge weakly to $1_Pdx/|P|$ so
$$
\Bc(P)=
\lim_{k\ra\infty}\Bc_k(P)\in M_\RR.
$$
Does the sequence $\{\Bc_k(P)\}_{k\in\NN}$ encode
other canonical quantities associated to $P$ beside $\Bc(P)$? We pose the following:
\bprob
\lb{ExpansionProb}
Is there an asymptotic expansion 
$$
\Bc_k(P)=\sum_{i=0}^\infty a_i(P)k^{-i}?
$$
If so, what are the  coefficients $a_i(P)$?
\eprob

Problem \ref{ExpansionProb} is of purely combinatorial interest within Ehrhart theory.
Thus, the bulk of this article (Sections \ref{Bc_k lattice}, \ref{Bc_k reflexive}, and \ref{delta_k-invariant for toric del Pezzo surfaces}) 
can be read as a purely enumerative combinatorics paper, and 
to some extent also
Section \ref{Bc_k smooth} that is concerned with algebraic combinatorics. 
Sections \ref{delta_k section}--\ref{tc section} and the Appendix
are concerned with the algebraic geometric applications and can be read independently by the reader interested only in those (or skipped by the combinatorics-oriented reader).

\subsection{Relation to algebraic-geometry and stability thresholds}

At the same time, this article can be considered as the second in a series where we initiate the study of asymptotics
of algebro-geometric invariants arising from Kodaira embeddings using 
vector spaces of holomorphic sections of
powers of a line bundle $H^0(X,L^k)$. The asymptotic parameter is
the power  $k$ of the line bundle, where $1/k$ can be thought
of as Planck's constant in geometric quantization. 
The over-arching geometric motivation for the series initiated in \cite{JR1} is 
to obtain complete asymptotic expansions
for these invariants. This is of course motivated
by the foundational result of Catlin and Zelditch 
on the complete asymptotics
of the Bergman kernel\cite{Cat99,Zel98}, but the setting is quite different: the invariants
we are interested in are Tian's $\alpha_k$- and Fujita--Odaka's $\delta_k$-invariants \cite{Tian90,FO18}
and they are not directly defined in terms of the Bergman kernel. Rather, they 
are more related to singularities of divisors in the linear series $|kL|$ 
on $X$. These invariants play a fundamental role in both the minimal model program in algebraic geometry and in the study of canonical K\"ahler metrics in differential geometry. 

\begin{problem}
\lb{deltaProb}
Is there an asymptotic expansion 
$$\delta_k=\sum_{i=0}^\infty a_i(X,L)k^{-i}?$$
If so, what are the coefficients $a_i(X,L)$?
\end{problem}

By a theorem of Blum--Jonsson $\lim_k\delta_k=\delta$ \cite[Theorem 4.4]{BJ20},
with $\delta$ the basis log canonical threshold of Fujita--Odaka
\cite{FO18}.
Thus, if such an expansion exists then $a_0(X,L)=\delta$. 

Another motivation is {\it Tian's stabilization problem} which asks whether
such invariants ``stabilize", i.e., become eventually constant in $k$.
Tian asked this for his $\alpha_k$-invariants in the smooth Fano setting \cite[Question 1]{Tian90}, and later for all polarizations on possibly singular varieties \cite[Conjecture 5.3]{Tian12}, and one can extend his problem to the $\delta_k$-invariants.

\begin{problem}
\lb{TianProb}
Is $\delta_k=\delta$ for all sufficiently large $k$?
\end{problem}

One can consider our Problem \ref{deltaProb} as a refinement of Problem \ref{TianProb}.

We completely solve Problem \ref{TianProb} in the (possibly singular) toric Fano setting (Corollary \ref{asymChow}).
This shows there is a remarkable dichotomy between the $\alpha_k$-invariants and the $\delta_k$-invariants. The first paper in the series resolved Tian's stabilization problem for $\alpha_k$-invariants by showing that in the toric Fano setting $\alpha_k=\alpha$ for all $k$. Here we show that $\delta_k$-invariants behave very differently and {\it rarely stabilize,} 
but that if they do stabilize then they also satisfy $\delta_k=\delta$ for {\it all} $k\in\NN$, and when Fano also $\delta_k=\delta=1$.
This precisely
suggests that most likely there is a complete asymptotic expansion as conjectured
in Problem \ref{deltaProb}, and indeed we prove that is the case.

\medskip
Problems \ref{deltaProb} and \ref{TianProb} are intimately related to 
Problem \ref{ExpansionProb} as we explain next. 

\medskip

The relation between stability thresholds and barycenters has a rich and long history.
Let $\Omega\in H^2(M,\RR)$ be a \K class.
In 1983 Futaki introduced the eponymous invariant $F_\Omega(v)$ that associates to 
each holomorphic
vector field $v$ on $X$ a number; he proved that $F_\Omega$ must vanish on all holomorphic
vector fields in order for a constant scalar curvature \K metric to exist in $\Omega$
\cite{Fut83}.
In the case $X$ is toric it suffices to consider the $n$
vector fields $t_1,\ldots,t_n$ 
generated by the complex torus $(\CC^\star)^n$ acting on $X$. A 
beautiful theorem of Mabuchi states that (when suitably normalized)
$(F(t_1),\ldots,
F(t_n))\in\RR^n$ is the barycenter $\Bc(P_{X,\Omega})$ of the Delzant polytope 
$P_{X,\Omega}$ associated to 
$(X,\Omega)$ \cite[Theorem 5.3]{Mab87}.
Thus, Futaki's obstruction can be rephrased geometrically as 
$\Bc(P_{X,\Omega})=0$. 
Some thirty years later, Blum--Jonsson showed that Mabuchi's theorem
can be rephrased in terms of the Fujita--Odaka $\delta$ invariant \cite[Corollary 7.16]{BJ20}:
$$
\delta^{-1}=\max_{i=1,\ldots,d}\big[\langle\Bc(P_{X,\Omega}),v_i\rangle+b_i\big],
$$
where $\{v_1,\ldots, v_d\}$ are the primitive generators of the one-dimensional cones of the fan determining $X$
and $(b_1,\ldots, b_d)\in\RR^d$ determine the class $\Omega$, so that
$P_{X,\Omega}=\bigcap_{i=1}^d\left\{u\in M_\RR\,:\,
\left\langle u,v_i\right\rangle+b_i\ge0\right\}$. In particular, in the Fano
case $\Omega=-K_X$, $b_1=\ldots=b_d=1$, so $\delta\le1$ with equality if and only if $\Bc(P)=0$.
It was then realized by Rubinstein--Tian--Zhang that Blum--Jonsson's formula in
the Fano case extends also to quantized barycenters \cite[Corollary 7.1]{RTZ21},
\beq
\lb{RTZEq}
\delta_k^{-1}=\max_{i\in\{1,\ldots,d\}}\big[\langle\Bc_k(P_{X,-K_X}),v_i\rangle+1\big],
\eeq
and that similarly $\Bc_k(P_{X,-K_X})$ is essentially
a quantized Futaki character that obstructs the existence
of $k$-th balanced metrics \cite[Theorem 2.3]{RTZ21}.
The formula \eqref{RTZEq} is the link, at the heart of the present article, between quantized barycenters and stability thresholds. 
In an appendix it is shown that \eqref{RTZEq} extends to all polarizations \eqref{toric delta_k formula}.

Thus, the study of asymptotics of $\Bc_k(P)$ 
for lattice polytopes is essentially equivalent to
the study of the asymptotics of $\delta_k$ for (possibly singular) toric varieties.
We employ this link in both directions in this article. 
For general $P$ we derive a formula for $\Bc_k(P)$ which
answers the first part of Problem \ref{ExpansionProb} as well as 
determines $a_1(P)$; we then use this to obtain  
corresponding formula and asymptotics for $\delta_k$ for general (possibly 
very singular) toric varieties.
Restricting to toric manifolds we then use (as well as add some
entries to) the well-developed
dictionary between Riemann--Roch theory and Ehrhart theory to answer
Problem \ref{ExpansionProb} completely for {\it Delzant lattice polytopes} $P$, 
determining all $a_i(P)$ in terms of mixed volumes of virtual polytopes. 
Thirdly, restricting to Gorenstein Fano varieties amounts to working with
the sub-class of reflexive polytopes for which we refine the general expansion
and show that $\Bc_k(P)$ are colinear, to first order, with $\Bc(P)$.
This is not true for general $P$.
Finally, for toric del Pezzos we show how the combinatorial and algebraic
approach give two completely different proofs of the same  
formula for $\Bc_k(P)$ or $\delta_k$. In this dimension
$\Bc_k(P)$ are actually colinear with $\Bc(P)$. The beauty of this special case is
that either convexity arguments or Ehrhart theory allows to replace the otherwise mysterious Ehrhart polynomial
expressions by completely explicit formulas involving the lattice-normalized measure or intersection numbers.

In a forthcoming article, the third in the series, 
we unify the study of Tian's and Fujita--Odaka's invariants \cite{JR3}.

\section{Results}

Our overarching goal is to make this article accessible to both combinatorists
and algebraic geometers. As part of that, we carefully separate the sections dealing with
each.
Sections \ref{Bc_k lattice}--\ref{delta_k-invariant for toric del Pezzo surfaces} detail our combinatorial results on Problem \ref{ExpansionProb}.
Section \ref{delta_k section} describes our results on Problems \ref{deltaProb} and \ref{TianProb}. Section \ref{tc section} revisits Section \ref{Bc_k smooth}
in terms of test configurations and is relevant to both audiences.

\subsection{Results for general lattice polytopes}

To state our most general result on Problem \ref{ExpansionProb} we set-up some
basic terminology from Ehrhart theory.

A real Euclidean space $V$ of dimension $n$ and an $\RR$-basis 
$\{v_1,\ldots,v_n\}$ 
of $V$ define a lattice 
$$
\ZZ^n\cong M\equiv M(v_1,\ldots,v_n):=\bigg\{\sum_{i=1}^n a_iv_i\,:\, a=(a_1,\ldots,a_n)\in\ZZ^n\bigg\}
\subset V\cong\RR^n,
$$
and $\{v_1,\ldots,v_n\}$ is called a basis of $M$, and 
$M\otimes_\ZZ\RR=V\cong\RR^n$. Any other basis
of $M$ differs by an $n$-by-$n$ matrix with integer entries and determinant
$\pm1$ \cite[Theorem 21.1]{Gruber}.
The determinant of $M$ is the Euclidean volume of the unit (or fundamental) 
parallelotope
$\{\sum_{i=1}^na_iv_i\,:\, a\in[0,1)^n\}\subset M\otimes_\ZZ\RR$, denoted
$$
\det M=|\det[v_1\ldots v_n]|,
$$
independent of the choice of basis.

A convex lattice polytope $P\subset M_\RR$ is the convex
hull of a finite collection of lattice points, assumed
to be $n$-dimensional.

\begin{definition}
\label{normalized vol}
    Let $L$ be a $n$-dimensional lattice and $L_H$ an affine sublattice obtained by intersecting $L$ with a hyperplane $H$.

    \noindent $\bullet\;$ Let $\Vol$ denote the standard Lebesgue $(n-1)$-dimensional measure on $H$. Define the \emph{lattice-normalized measure on $H$} as the measure with respect to the affine sublattice $L_H$, i.e.,
    $$
    \wvol:=\frac{1}{\Vol(C)}\Vol,
    $$
    where $C$ is a fundamental parallelotope of the affine sublattice $L_H$. That is, $C$ is the parallelotope spaned by a basis of $L_H$. Note that while fundamental parallelotopes are not unique, their volumes are the same \cite[p. 358]{Gruber}.

    \noindent $\bullet\;$ Let $P$ be a lattice polytope. The {\it lattice-normalized measure on $\partial P$} is defined to be the measure whose restriction to any facet of $P$ is the normalized volume on that facet. Let $\wBc(\partial P)$ denote the barycenter of $\partial P$ with respect to this measure, i.e.,
    $$
        \wBc(\partial P)=\frac{1}{\wvol(\partial P)}\int_{\partial P}u\cdot d\wvol
    $$
\end{definition}

\bigskip
See Examples \ref{example1 of normalized vol} and \ref{example2 of normalized vol}
for some explicit calculation of the lattice-normalized measure.
This measure is important in Ehrhart theory, as it governs the first non-trivial
coefficient of the {\it Ehrhart polynomial}, a classical object recalled in the next result:

\begin{theorem}{\rm \cite{Ehr67a,Ehr67b}, \cite[Theorem 19.1]{Gruber},\cite[Theorem 12.2, Exercise 12.5]{MS05},\cite[Corollary 5.5]{BR07}}
\label{Ehrhart polynomial}
    Let $M$ be a lattice of dimension $n$. For any lattice polytope $P$, there is a polynomial
    $$
        E_P(k)=\sum_{i=0}^na_ik^i
    $$
    such that
    $$
        E_P(k)=\#(kP\cap M)
    $$
    for any $k\in\NN$. In particular,
    $$\begin{aligned}
        a_n&=\Vol(P),&a_{n-1}&=\frac{1}{2}\wvol(\partial P),&a_0&=1.
    \end{aligned}$$
\end{theorem}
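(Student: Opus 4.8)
The plan is to prove this classical statement by the standard route: reduce everything to a single lattice simplex by triangulating $P$, and then treat the simplex via a half-open parallelepiped decomposition of the cone over it. Concretely, I would first fix a triangulation of $P$ into simplices all of whose vertices lie in $P\cap M$ (e.g. a pulling triangulation; such triangulations always exist). Since scaling by an integer $k\ge 1$ preserves dimensions, $kP$ is the disjoint union of the relative interiors of the cells $k\tau$, so
$$
\#(kP\cap M)=\sum_{\tau}\#\big(\mathrm{relint}(k\tau)\cap M\big),
$$
the sum running over all faces $\tau$ of the triangulation, of every dimension. It therefore suffices to prove, for a lattice $d$-simplex $\tau$: that $L_\tau(k):=\#(k\tau\cap M)$ and $L_\tau^{\circ}(k):=\#(\mathrm{relint}(k\tau)\cap M)$ agree with polynomials of degree $d$; that their leading coefficients equal the $d$-volume of $\tau$ normalized by the induced affine sublattice $M\cap\mathrm{aff}(\tau)$; and that $L_\tau^{\circ}(k)=(-1)^{d}L_\tau(-k)$ together with $L_\tau(0)=1$.

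Next, for a lattice $d$-simplex $\tau=\mathrm{conv}(w_0,\dots,w_d)$ I would pass to the cone $C\subset M_\RR\times\RR$ generated by $(w_0,1),\dots,(w_d,1)$, whose lattice points with last coordinate $k$ are exactly $(k\tau\cap M)\times\{k\}$. Writing $C$ as the disjoint union of the lattice translates of the half-open parallelepiped $\Pi=\{\sum_{i}\lambda_i(w_i,1):0\le\lambda_i<1\}$ produces the Hilbert series
$$
\sum_{k\ge 0}L_\tau(k)\,t^{k}=\frac{h(t)}{(1-t)^{d+1}},\qquad h(t)=\sum_{p\in\Pi\cap(M\times\ZZ)}t^{\mathrm{ht}(p)},
$$
where $\mathrm{ht}$ is the last-coordinate function and $\deg h\le d$; hence $L_\tau$ is a polynomial of degree $d$ with leading coefficient $h(1)/d!$. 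Since $h(1)=\#(\Pi\cap(M\times\ZZ))$ is the number of lattice points in a fundamental parallelepiped, it equals the relative volume of $\Pi$, and a determinant computation identifies that with $d!$ times the normalized $d$-volume of $\tau$, giving the asserted leading coefficient. Running the same decomposition with the opposite half-open convention gives $L_\tau^{\circ}$ with numerator $t^{d+1}h(1/t)$, which yields simultaneously the reciprocity $L_\tau^{\circ}(k)=(-1)^{d}L_\tau(-k)$ and $L_\tau(0)=h(0)=1$, the origin being the only point of $\Pi$ at last coordinate $0$.

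Finally, I would assemble. Summing the simplex case over all faces shows $E_P(k):=\sum_{\tau}L_\tau^{\circ}(k)$ is a degree-$n$ polynomial that equals $\#(kP\cap M)$ for $k\ge 1$, with leading coefficient $\sum_{\dim\sigma=n}\Vol(\sigma)=\Vol(P)$, so $a_n=\Vol(P)$. Evaluating at $0$ and using $L_\tau^{\circ}(0)=(-1)^{\dim\tau}$ gives $a_0=E_P(0)=\sum_{\tau}(-1)^{\dim\tau}=\chi(P)=1$. For $a_{n-1}$, combining the triangulation identity with the simplex reciprocity gives Ehrhart--Macdonald reciprocity $\#(\mathrm{relint}(kP)\cap M)=(-1)^{n}E_P(-k)$, whence
$$
\#(\partial(kP)\cap M)=E_P(k)-(-1)^{n}E_P(-k)=2a_{n-1}k^{n-1}+2a_{n-3}k^{n-3}+\cdots;
$$
on the other hand $\partial P=\bigcup_{F}F$ with the facets $F$ meeting pairwise in dimension $\le n-2$, so inclusion--exclusion together with the leading-coefficient statement applied to each $F$ with its induced sublattice $M\cap\mathrm{aff}(F)$ shows the left-hand side has leading coefficient $\sum_{F}\wvol(F)=\wvol(\partial P)$. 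Comparing the two expressions gives $a_{n-1}=\tfrac12\wvol(\partial P)$.

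The main obstacle is the simplex step: proving that the cone over a lattice simplex tiles into lattice translates of a single half-open parallelepiped, and identifying the lattice-point count of that parallelepiped with its (suitably normalized) volume. Everything else is bookkeeping, the most delicate point being the correct handling of the induced affine sublattices $M\cap\mathrm{aff}(F)$ on facets, so that the ``lattice-normalized'' volumes are exactly the quantities appearing in the $a_{n-1}$ computation; one also uses the elementary fact that $P$ admits a lattice triangulation. An alternative avoiding cones --- interpolate $\#(kP\cap M)$ through $n+1$ values and bound finite differences to control the degree --- still reduces to the simplex case, so it does not genuinely save work.
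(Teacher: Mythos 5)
The paper does not actually prove Theorem \ref{Ehrhart polynomial}: it is quoted as a classical result, with the proof delegated to the cited references (Ehrhart, Gruber, Miller--Sturmfels, Beck--Robins). Your proposal is a correct outline of exactly the standard argument those references give: triangulate $P$ by lattice simplices, cone over each simplex, tile the cone by lattice translates of a half-open fundamental parallelepiped to obtain the Hilbert series $h(t)/(1-t)^{d+1}$ with $\deg h\le d$, read off polynomiality, the leading coefficient $h(1)/d!$, the constant term $h(0)=1$, and simplex reciprocity from the opposite half-open convention; then assemble, and extract $a_{n-1}$ by comparing the parity decomposition of $E_P(k)-(-1)^nE_P(-k)$ with the facet-by-facet count of $\partial(kP)\cap M$, normalized exactly as in Definition \ref{normalized vol}. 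All of this is sound, including the point you rightly flag about using the induced affine sublattice $M\cap\mathrm{aff}(F)$ on each facet. The one step you should not file under ``bookkeeping'' is the passage from simplex reciprocity to Ehrhart--Macdonald reciprocity for $P$ itself: to identify $(-1)^nE_P(-k)=\sum_\tau(-1)^{n-\dim\tau}L_\tau(k)$ with $\#(\mathrm{relint}(kP)\cap M)$ you must show that, for each lattice point $x\in kP$, the alternating sum $\sum_{\tau\,:\,x\in k\tau}(-1)^{n-\dim\tau}$ equals $1$ when $x$ is interior and $0$ when $x\in\partial(kP)$; this is the statement that links of interior (resp.\ boundary) faces of a triangulated $n$-ball are spheres (resp.\ balls), a genuine lemma rather than a formality. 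With that supplied, the proof is complete and matches the literature the theorem cites.
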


\medskip

Next, embed $M$ as a sublattice $M\times\{0\}$ of $M\times\ZZ$. 
For $c=(c_1,\ldots,c_n)\in M$ such that 
$$
P+c\subset\RR_+^n\times\{0\},
$$
and for each $i\in\{1,\ldots,n\}$, define
the convex lattice polytope of one dimension higher (``rooftop polytope"),
$$
P(c;i):=
\Big\{(u,h)\,:\, u=(u_1,\ldots,u_n)\in 
P+c,\; h\in[0,u_i] 
\Big\}
\subset M_\RR\times\RR.
$$
Denote 
\beq
\lb{BccompEq}
\Bc_k(P)=(\Bc_{k,1}(P),\ldots,\Bc_{k,n}(P))\in M_\RR\cong\RR^n.
\eeq

\medskip
Our first main result answers the first part of Problem \ref{ExpansionProb},
as well as determines the first non-trivial coefficient.

\bthm
\lb{FirstMainThm}
Let $M\cong\ZZ^n$ be a lattice and $P\subset M_\RR$ be an $n$-dimensional 
convex lattice polytope. For $c\in M$ such that $P+c\subset\RR_+^n$,
    \begin{equation}\label{translation}
        \Bc_{k,i}
        =\frac{E_{P(c;i)}(k)-E_P(k)}{kE_P(k)}-c
                =\frac{E_{P(c;i)}(k)-E_{P\times[0,c_i]}(k)}{kE_P(k)}.
    \end{equation}
    In particular, $\Bc_k(P)=\sum_{i=0}^\infty a_i(P)k^{-i}$ for some $a_i$
    that depend only on $P$. Moreover,
    $$
    a_0(P)=\Bc(P),\q
    a_1(P)=\frac{\wvol(\partial P)}{2\Vol(P)}(\wBc(\partial P)-\Bc(P)),
    $$
    where $\wvol(\partial P)$ is the mass of the lattice-normalized
    measure of $\partial P$, and $\wBc(\partial P)$ is
    its barycenter (see Definition \ref{normalized vol}). 
Finally, if $\Bc_k(P)$ is constant for $n+1$ values of $k$ then $a_i(P)=0$
for all $i\ge1$ and $\Bc_k(P)=\Bc(P)$ for all $k\in\NN$.
\ethm

Theorem  \ref{FirstMainThm} implies that if 
$\Bc(P)\not=\Bc_k(P)$ for {\it some} $k\in\NN$ then 
the same is true for {\it all sufficiently large} $k$.
In this context, it raises the following interesting problem:
\bprob
\lb{stabProb}
Assume that $P$ is a lattice polytope satisfying 
$\Bc(P)\not=\Bc_k(P)$ for some $k\in\NN$.
Does there exist such a $P$ for which $\{\Bc_{i}(P)\}_{i\in I}$
is a singleton for some subset $I\subset \NN$ with $|I|\ge2$?
\eprob

Theorem \ref{FirstMainThm}
says that if such a $P$ exists then $|I|\le n$. In Remark
\ref{rmk equivalence of Bc=0} we resolve Problem \ref{stabProb}
in dimension $n=2$.

\subsection{Results for Delzant lattice polytopes}\label{Delzant subsec}

In this part, we assume that $P$ belongs to the sub-class of Delzant polytopes.

\begin{definition}\label{defi: DelzantPolytope}
    A convex lattice polytope $P$ in $\mathbb{R}^n$ is {\it Delzant} if:
    \begin{enumerate}
        \item there are exactly $n$ edges meeting at each vertex $p$ (i.e., $P$ is {\it simple});
        \item the edges meeting at a vertex $p$ satisfy that each edge is of the form $p+t u_i$ with $u_i\in\mathbb{Z}^n$, $i=1,\dots,n$;
        \item the $u_1,\dots,u_n$ in (2) can be chosen to be a basis of $\mathbb{Z}^n$.
    \end{enumerate}
\end{definition}

Such polytopes are an important sub-class of lattice polytopes since there is a one-to-one correspondence between Delzant lattice polytopes and 
polarized toric manifolds. We collect in one statement 
that correspondence and in fact a more general one including
the singular and reflexive cases.

\begin{theorem}
\lb{toric-corresp}
{\rm \cite[Theorem 28.2]{dasilva}, \cite{Delzant}, \cite[\S1.5]{Ful93}, \cite[Proposition 2.2.23]{Bat94}}
    There is a one-to-one correspondence between lattice polytopes and polarized toric varieties. Specifically, let
    $$
        P=\bigcap_{i=1}^d\left\{u\in M_\RR\,:\,\left\langle u,v_i\right\rangle\geq-b_i\right\}
    $$
    be a lattice polytope, where each $v_i\in N:=\Hom(M,\ZZ)$ is chosen to be primitive, i.e., not a multiple of another lattice point, and $b_i\in\ZZ$. It determines a fan where $\{v_i\}_{i=1}^d$ is the set of primitive generators of the one-dimensional cones. This fan then determines a toric variety $X$, on which the irreducible toric divisors $\{D_i\}_{i=1}^d$ are in one-to-one correspondence with $\{v_i\}_{i=1}^d$. The divisor
    $$
        D:=\sum_{i=1}^db_iD_i
    $$
    is ample and Cartier. Let $L$ be the corresponding line bundle. Then the polarized variety $(X,L)$ is the one in correspondence with $P$.
    
    The variety $X$ is smooth if and only if $P$ is Delzant (Definition \ref{defi: DelzantPolytope}).

    The variety $X$ is Gorenstein Fano (i.e., $-K_X$ is Cartier and ample) with $L=-K_X$ if and only if $P$ is reflexive.
\end{theorem}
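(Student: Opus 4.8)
The plan is to recall the classical normal-fan construction (as packaged in the cited references) and verify the three parts—the bijection, the smoothness criterion, the reflexivity criterion—one at a time, keeping careful track of sign conventions. First, the bijection on objects. Given a full-dimensional lattice polytope $P=\bigcap_{i=1}^d\{u:\langle u,v_i\rangle\ge-b_i\}$ written irredundantly with each $v_i\in N:=\Hom(M,\ZZ)$ primitive, attach to each face $Q\preceq P$ the cone $\sigma_Q:=\mathrm{Cone}\{v_i:Q\subseteq F_i\}\subseteq N_\RR$, where $F_i$ is the $i$-th facet; the collection $\Sigma_P=\{\sigma_Q\}$ is the normal fan of $P$, a complete fan because $P$ is bounded and full-dimensional, with rays exactly the $v_i$ and maximal cones $\sigma_u$ indexed by the vertices $u$ of $P$. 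Let $X=X_{\Sigma_P}$ and let $D_i$ be the prime toric divisor of the ray $v_i$. Conversely, given $(X_\Sigma,L)$ with $L$ ample, choose a torus-linearization of $L$ (it exists and is unique up to a character), write its invariant divisor as $D=\sum b_iD_i$, and set $P_D:=\{u\in M_\RR:\langle u,v_i\rangle\ge-b_i\}$. The content is that these assignments are mutually inverse, which is the standard polytope--fan dictionary \cite{Ful93,Delzant,dasilva}.

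Second, I would prove that $D:=\sum_{i=1}^db_iD_i$ is Cartier and ample. Cartierness is local: on the affine chart $U_{\sigma_u}$ attached to a vertex $u$ of $P$ one has $\langle u,v_i\rangle=-b_i$ for every ray $v_i$ of $\sigma_u$, so the character $\chi^{-u}$ is a local equation for $D$; here it is essential that $u\in M$, i.e.\ that $P$ is a \emph{lattice} polytope. Ampleness follows from the toric Nakai/Kleiman criterion: the support function $\psi_D$ of $D$, determined by $\psi_D|_{\sigma_u}=\langle u,\cdot\rangle$, is strictly convex with respect to $\Sigma_P$ precisely because $P$ is the polytope cut out by the $v_i$ at heights $-b_i$ and has $\Sigma_P$ as its normal fan, and strict convexity of $\psi_D$ is equivalent to ampleness of $L=\mathcal{O}(D)$. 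Taking global sections then recovers $H^0(X,L^k)$ as spanned by $\{\chi^u:u\in kP\cap M\}$, which both closes the loop of the bijection and is the algebraic fact underlying the Ehrhart-theoretic identities used later in the paper.

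Third, the smoothness and reflexivity statements are read off locally. The charts $U_{\sigma_u}$ cover $X$, and $U_{\sigma_u}$ is smooth iff $\sigma_u$ is generated by part of a $\ZZ$-basis of $N$; $P$ is simple at $u$ exactly when $\sigma_u$ has $n$ rays $v_1,\dots,v_n$, and a simple vertex has $n$ emanating edges with primitive direction vectors $w_1,\dots,w_n\in M$ satisfying $\langle w_i,v_j\rangle=0$ for $i\ne j$ and $\langle w_i,v_i\rangle>0$. I would then invoke the elementary fact that for such dual $\QQ$-bases $\{v_i\}\subset N$, $\{w_i\}\subset M$ with all members primitive, $\{v_i\}$ is a $\ZZ$-basis iff $\{w_i\}$ is a $\ZZ$-basis: writing $w_i=\langle w_i,v_i\rangle\,v_i^\vee$ in the dual basis, primitivity of $w_i$ forces $\langle w_i,v_i\rangle=1$, and the converse is symmetric. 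The left-hand condition at every vertex is smoothness of $X$; the right-hand condition at every vertex is exactly conditions (2)--(3) of the Delzant property, while (1) is the simplicity already used. For the Gorenstein Fano part, one uses $-K_X=\sum_{i=1}^dD_i$ for any normal toric variety: thus $L=-K_X$ forces $b_1=\dots=b_d=1$, i.e.\ $P=\bigcap_i\{u:\langle u,v_i\rangle\ge-1\}$; by Step 2, Cartierness of $-K_X$ is then the statement that this polytope has vertices in $M$ (Gorenstein), ampleness is that it is full-dimensional with normal fan $\Sigma$ (Fano), and in that case $0\in\mathrm{int}(P)$ since $\langle 0,v_i\rangle=0>-1$. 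A lattice polytope with $0$ in its interior whose facets all lie at lattice distance $1$ from the origin—equivalently whose polar dual equals the convex hull of the primitive $v_i$, hence is again a lattice polytope—is precisely a reflexive polytope \cite[Proposition 2.2.23]{Bat94}, and a reflexive $P$ runs the argument backwards.

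The main obstacle I anticipate is Step 2: establishing the equivalence between ampleness and strict convexity of the toric support function (the toric Nakai criterion), together with the verification that the polytope-to-$(X,L)$ and $(X,L)$-to-polytope assignments are genuinely inverse—this last point relies on the equivariant linearizability of ample line bundles on toric varieties and on the identification of the invariant sections of $\mathcal{O}(D)$ with $\{\chi^u:u\in P\cap M\}$. Once that machinery is cited, Steps 1, 3, and the reflexive analysis are elementary convex and linear algebra.
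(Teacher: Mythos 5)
The paper does not prove this statement---it is quoted verbatim from the cited references---and your outline is precisely the standard normal-fan argument found there (Fulton \S1.5/\S3.4, Delzant, Batyrev): normal fan, Cartier data $m_{\sigma_u}=u$ at each vertex, ampleness via strict convexity of the support function, smoothness versus the Delzant condition by dualizing ray generators against primitive edge directions, and the Gorenstein Fano case via $-K_X=\sum_i D_i$; so it is essentially the expected proof and is correct. The only point worth tightening is that $L=-K_X$ only forces $D=\sum_i b_iD_i$ to be \emph{linearly equivalent} to $\sum_i D_i$, so a priori $P$ is a lattice translate of $\bigcap_i\{u:\langle u,v_i\rangle\ge-1\}$; the reflexive normalization ($0\in\Int P$, all facets at lattice height $1$) corresponds to choosing the invariant representative $\sum_i D_i$ itself, which is the convention implicit in the theorem's formulation of the correspondence.
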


To generalize this correspondence for divisors that are not necessarily ample, we use the notion of virtual polytopes \cite[Corollary 2]{PK93}.

\begin{definition}\label{virtual def}
\noindent
$\bullet\;$ Let $\cP$ denote the monoid of (possibly degenerate, i.e., of lower dimension) polytopes in $M_\RR$. Recall that a monoid is a set equipped with an associative multiplication with identity. Let $\cP^*$ denote the Grothendieck group of $\cP$, i.e., the group of equivalent classes of formal differences $P_1-P_2$ where $P_1-P_2\sim P_1'-P_2'$ if and only if $P_1+P_2'=P_1'+P_2$. The elements in $\cP^*$ are called {\it virtual polytopes}.

\noindent
$\bullet\;$     The notion of {\it mixed volumes} (including volumes) can be extended by multi-linearity to virtual polytopes. For $P_1,\ldots,P_k\in \cP^*$ and 
$m_1,\ldots,m_k\in\NN$ with $m_1+\ldots+m_k=n$, set
    $$
        V\left(P_1,m_1;\ldots;P_k,m_k\right):=V\left(P_1,\ldots,P_1,\ldots,P_k,\ldots,P_k\right),
    $$
    where each $P_i$ appears $m_i$ times.

    \noindent
$\bullet\;$ Let $X$ be a smooth toric variety (toric manifold). Let 
$$\{v_i\}_{i=1}^d$$ denote the primitive generators of the rays in the associated fan. Let $$D_i$$ denote the toric divisor corresponding to $v_i$ \cite[\S3.3]{Ful93}. For any ample toric divisor
    $$
        D=\sum_{i=1}^db_iD_i
    $$
    where $b_i\in\ZZ$, define the {\it polytope associated to $(X,D)$},
    $$
        P_D:=\bigcap_{i=1}^d\left\{u\in M_\RR\,:\,\left\langle u,v_i\right\rangle\geq-b_i\right\}.
    $$
    In general, since any divisor can be written as the difference of two ample divisors \cite[Example 1.2.10]{Laz04}, for any divisor $D$ there is an associated virtual polytope $P_D\in \cP^*$.
\end{definition}

There are efficient numerical algorithms to compute mixed volumes,
see, e.g., \cite{TV07,GMRV}.

\bigskip

Let $P$ be a Delzant lattice polytope, and let 
$(X,L)$ be the associated polarized toric manifold given by Theorem \ref{toric-corresp},
where
$$
    L=\sum_{i=1}^db_iD_i.
$$
Fix $v\in N$. Pick 
\beq\lb{qEq}
\hbox{$q\in\ZZ$ such that $\langle\cdot,v\rangle+q>0$ on $P$ (possible as $P$ is compact).}
\eeq
Consider the lattice polytope of one dimension higher,
\begin{align}
\lb{PvqEq}
    P_{v,q}&:=\left\{\left(u,h\right)\in\RR^n\times\RR\,:\,u\in P,\ 0\leq h\leq\left\langle u,v\right\rangle+q\right\}\\
    &\phantom{:}=\left\{\left(u,h\right)\in\RR^n\times\RR\,:\,\left\langle u,v_i\right\rangle\geq-b_i, i\in\{1,\ldots,d\},\ 0\leq h\leq\left\langle u,v\right\rangle+q\right\}.
\end{align}
It corresponds to a polarized toric manifold of dimension $n+1$
\begin{equation}\label{cXcL}
    \left(\overline{\cX},\overline{\cL}\right)
\end{equation}
(see Figures \ref{fan} and \ref{test configuration}). This polytope has $d+2$ facets. Under the correspondence
of Theorem \ref{toric-corresp},
the associated 
primitive generators of the rays in the fan are 
$$
(v_1,0),\ldots,(v_d,0),(0,1),(v,-1)\in N\times\ZZ,
$$
and we denote
\begin{equation}\label{cD def}
    \cD_1,\ldots,\cD_{d},\cD_{d+1},\cD_{d+2}
\end{equation}
the corresponding torus-invariant divisors. 
Rewrite \eqref{PvqEq},
$$
    P_{v,q}=\left\{\left(u,h\right)\in\RR^n\times\RR\,:\,\left\langle\left(u,h\right),\left(v_i,0\right)\right\rangle\geq-b_i,\ \left\langle\left(u,h\right),\left(0,1\right)\right\rangle\geq0,\ \left\langle\left(u,h\right),\left(v,-1\right)\right\rangle\geq-q\right\},
$$
By Definition \ref{virtual def}, 
$$
P_{v,q}=P_\cD,
$$
where
$$
    \cD:=\sum_{i=1}^db_i\cD_i+q\cD_{d+2},
$$
and
$$
\overline{\cL}\sim\cD.
$$
Define a virtual polytope
$$
    P_v':=P_{\sum\limits_{i=1}^db_i\cD_i}=P_{v,q}-qP_{\cD_{d+2}}.
$$

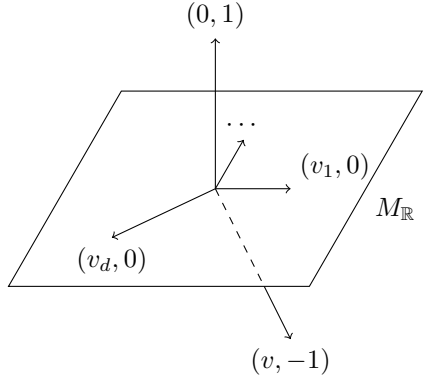
\begin{figure}
    \centering
    \begin{tikzpicture}
        \draw(-2-3/4,{-3*sqrt(3)/4})--(2-3/4,{-3*sqrt(3)/4})--(2+3/4,{3*sqrt(3)/4})node[midway,below right]{$M_\RR$}--(-2+3/4,{3*sqrt(3)/4})--cycle;
        \draw[->](0,0)--(1,0)node[above right]{$(v_1,0)$};
        \draw[->](0,0)--(3/8,{3*sqrt(3)/8})node[above]{$\cdots$};
        \draw[->](0,0)--(-1-3/8,{-3*sqrt(3)/8})node[below]{$(v_d,0)$};
        \draw[->](0,0)--(0,2)node[above]{$(0,1)$};
        \draw[dashed](0,0)--({3*sqrt(3)/8},{-3*sqrt(3)/4});
        \draw[->]({3*sqrt(3)/8},{-3*sqrt(3)/4})--(1,-2)node[below]{$(v,-1)$};
    \end{tikzpicture}
    \caption{\small The fan of the new toric manifold $\overline{\cX}$. Note that $X$ corresponds to the subfan consisting of the cones in $M_\RR\subseteq M_\RR\times\RR$. The cones above $M_\RR$ are spanned by the cones in $M_\RR$ and the ray generated by $(0,1)$, while the cones below $M_\RR$ are spanned by the cones in $M_\RR$ and the ray generated by $(v,-1)$.}\label{fan}
\end{figure}

\begin{figure}
    \centering
    \begin{tikzpicture}
        \draw(-2,1)--(-2,0)--(-.5-3/8,{-3*sqrt(3)/8})--(1-3/8,{-3*sqrt(3)/8})--(2,0)--(2,5)--(1-3/8,{4-3*sqrt(3)/8})--(-.5-3/8,{2.5-3*sqrt(3)/8})--cycle--(-.5+3/8,{2.5+3*sqrt(3)/8})--(1+3/8,{4+3*sqrt(3)/8})--(2,5);
        \draw(-.5-3/8,{-3*sqrt(3)/8})--(-.5-3/8,{2.5-3*sqrt(3)/8});
        \draw(1-3/8,{-3*sqrt(3)/8})--(1-3/8,{4-3*sqrt(3)/8});
        \draw(-23/16,{7/8-3*sqrt(3)/16})node{$\cD_1$};
        \draw(-1/8,{13/8-3*sqrt(3)/8})node{$\cdots$};
        \draw(21/16,{9/4-3*sqrt(3)/16})node{$\cD_d$};
        \draw(0,3)node[above left]{$\cD_{d+2}$};
        \draw(4,0)--(7,0)node[midway,below]{$\cD_{d+1}$}--(7,4)--(4,4)node[midway,above]{$\cD_{d+2}$}--cycle;
        \draw[->](7.5,2)--(8.5,2)node[midway,above]{$\pi$};
        \draw(9,0)node[right]{$0$}--(9,4)node[midway,right]{$\PP^1$}node[right]{$\infty$};
    \end{tikzpicture}
    \caption{The projection $\pi:\overline{\cX}\to\PP^1$.}\label{test configuration}
\end{figure}

\bigskip 
Our second main result is the complete resolution of Problem 
\ref{ExpansionProb} when $P$ is Delzant:

\bigskip 
\begin{theorem}\label{smooth theorem}
    Let $P\subseteq M\otimes\RR$ be a Delzant lattice polytope. Then for any $v\in N$,
    $$
        \left\langle\Bc_k\left(P\right),v\right\rangle=\frac{\sum\limits_{j=0}^nc'_{j+1}k^j}{\sum\limits_{j=0}^na_jk^j}=\sum_{j=0}^\infty k^{-j}\sum_{i=0}^jc'_{n+1-j+i}\left(\frac{\delta_{i0}}{a_n}+\sum_{\ell=1}^i\frac{\left(-1\right)^\ell}{a_n^{\ell+1}}\sum_{\sum\limits_{m=1}^\ell i_m=i-\ell}\prod_{m=1}^\ell a_{n-i_m-1}\right),
    $$
    where
    \begin{align*}
        a_j&=\sum_{\sum\limits_{i=1}^d\ell_i=n-j}\frac{n!B_{\ell_1}\cdots B_{\ell_d}}{j!\ell_1!\cdots\ell_d!}V\left(P,j;P_{D_1},\ell_1;\ldots;P_{D_d},\ell_d\right),\\
        c'_j&=\sum_{\sum\limits_{i=1}^d\ell_i=n+1-j}\frac{\left(n+1\right)!B_{\ell_1}\cdots B_{\ell_d}}{j!\ell_1!\cdots\ell_d!}V\left(P_v',j;P_{\cD_1},\ell_i;\ldots;P_{\cD_d},\ell_d\right).
    \end{align*}
    In particular,
    \begin{align*}
        a_n&=\Vol\left(P\right),\\
        a_{n-1}&=\frac{n}{2}V\left(P,n-1;P_{-K_X},1\right),\\
        a_{n-2}&=\frac{n\left(n-1\right)}{12}\left(V\left(P,n-2;P_{-K_X},2\right)+\sum\limits_{i<j}V\left(P,n-2;D_i,1;D_j,1\right)\right),\\
        c'_{n+1}&=\Vol\left(P_v'\right),\\
        c'_n&=\frac{n+1}{2}V\left(P_v',n;P_{-K_{\overline{\cX}/\PP^1}},1\right),\\
        c'_{n-1}&=\frac{\left(n+1\right)n}{12}\left(V\left(P_v',n-1;P_{-K_{\overline{\cX}/\PP^1}},2\right)+\sum\limits_{1\leq i<j\leq d}V\left(P_v',n-1;P_{\cD_i},1;P_{\cD_j},1\right)\right),
    \end{align*}
    where
    \begin{align*}
        P_{-K_X}=P_{\sum\limits_{i=1}^dD_i},
        \q\q
        P_{-K_{\overline{\cX}/\PP^1}}=P_{\sum\limits_{i=1}^d\cD_i},
    \end{align*}
    and
    \begin{multline*}
        \left\langle\Bc_k\left(P\right),v\right\rangle=\frac{c'_{n+1}}{a_n}+\left(\frac{c'_n}{a_n}-\frac{c'_{n+1}a_{n-1}}{a_n^2}\right)\frac{1}{k}\\
        +\left(\frac{c'_{n-1}}{a_n}-\frac{c'_na_{n-1}+c'_{n+1}a_{n-2}}{a_n^2}+\frac{c'_{n+1}a_{n-1}^2}{a_n^3}\right)\frac{1}{k^2}+O\left(\frac{1}{k^3}\right).
    \end{multline*}
\end{theorem}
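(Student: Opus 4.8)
The plan is to express $\langle\Bc_k(P),v\rangle$ as a ratio of two polynomials in $k$ coming from Ehrhart/Riemann--Roch data on $X$ and on the $(n{+}1)$-dimensional toric variety $\overline{\cX}$, and then to expand that rational function in powers of $1/k$. \emph{Step 1 (rooftop reduction).} By definition $\langle\Bc_k(P),v\rangle=\big(k\,E_P(k)\big)^{-1}\sum_{u\in kP\cap M}\langle u,v\rangle$, so it suffices to understand the polynomial $N(k):=\sum_{u\in kP\cap M}\langle u,v\rangle$. Running the rooftop argument behind Theorem~\ref{FirstMainThm} with the affine functional $\langle\cdot,v\rangle+q$ (nonnegative on $P$ by \eqref{qEq}) in place of a coordinate and counting lattice points of $P_{v,q}$ fibrewise over $kP$ gives $E_{P_{v,q}}(k)=\sum_{u\in kP\cap M}\big(\langle u,v\rangle+kq+1\big)=N(k)+(kq+1)E_P(k)$, hence $N(k)=E_{P_{v,q}}(k)-(kq+1)E_P(k)$, a polynomial of degree $\le n+1$ with $N(0)=0$. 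Thus $\langle\Bc_k(P),v\rangle=N(k)/(kE_P(k))$, and it remains to identify the coefficients of $E_P$ and of $N$.

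\emph{Step 2 (the toric dictionary).} Since $P$ is Delzant, Theorem~\ref{toric-corresp} gives a smooth polarized toric variety $(X,L)$ with $L=\mathcal{O}(\sum b_iD_i)$; moreover $P_{v,q}$ is again Delzant, so the variety $\overline{\cX}$ of \eqref{cXcL} is smooth of dimension $n+1$ with invariant divisors $\cD_1,\dots,\cD_{d+2}$. By Demazure vanishing $E_P(k)=\chi(X,L^k)=\int_Xe^{kc_1(L)}\Td(X)$; using $\Td(X)=\prod_{i=1}^d\frac{[D_i]}{1-e^{-[D_i]}}$, expanding the exponential and the Todd product, and converting intersection numbers to mixed volumes via $\int_Xc_1(L)^j[D_1]^{\ell_1}\cdots[D_d]^{\ell_d}=n!\,V(P,j;P_{D_1},\ell_1;\dots;P_{D_d},\ell_d)$ (extended to virtual polytopes by multilinearity, and similarly on $\overline{\cX}$) produces $E_P(k)=\sum_{j}a_jk^j$ with the stated $a_j$; the top coefficients follow from $\frac{x}{1-e^{-x}}=1+\frac{x}{2}+\frac{x^2}{12}+\cdots$ and $\sum_iD_i=-K_X$. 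For $N(k)$, the decisive geometric fact is that the projection $\pi\colon\overline{\cX}\to\PP^1$ of Figure~\ref{test configuration} is a smooth fibration with fibre $\cong X$ whose two torus-fixed fibres are $\cD_{d+1}$ and $\cD_{d+2}$; these are disjoint, have trivial normal bundle, and $\Td(\overline{\cX})=\big(\prod_{i=1}^d\frac{[\cD_i]}{1-e^{-[\cD_i]}}\big)\cdot\pi^*\Td(\PP^1)$ with $\pi^*\Td(\PP^1)=1+[\text{fibre}]$. Iterating the exact sequences $0\to\mathcal{O}(-\cD_{d+2})\to\mathcal{O}\to\mathcal{O}_{\cD_{d+2}}\to0$ and using $\cD_{d+2}|_{\cD_{d+2}}=\mathcal{O}$ together with $(\sum b_i\cD_i)|_{\cD_{d+2}}\leftrightarrow c_1(L)$ gives $E_{P_{v,q}}(k)=\chi\big(\overline{\cX},\mathcal{O}(k\textstyle\sum_{i=1}^db_i\cD_i)\big)+kq\,E_P(k)$, while the $\Td$ factorization and the projection formula give $\chi\big(\overline{\cX},\mathcal{O}(k\sum_{i=1}^db_i\cD_i)\big)=\sum_jc'_jk^j+E_P(k)$ with $c'_j$ exactly the virtual-mixed-volume expression in the theorem (only $\cD_1,\dots,\cD_d$ survive, since the two fibre Todd factors collapse into $\pi^*\Td(\PP^1)$). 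Combining, $N(k)=\sum_jc'_jk^j$ with $c'_0=0$; the explicit $c'_{n+1},c'_n,c'_{n-1}$ come out as for the $a_j$, now with $-K_{\overline{\cX}/\PP^1}=\sum_{i=1}^d\cD_i$.

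\emph{Step 3 (the expansion).} Since $N(k)=\sum_{j=0}^{n+1}c'_jk^j$ with $c'_0=0$, we get $N(k)/k=\sum_{j=0}^nc'_{j+1}k^j$, hence $\langle\Bc_k(P),v\rangle=\big(\sum_{j=0}^nc'_{j+1}k^j\big)/\big(\sum_{j=0}^na_jk^j\big)$, the first claimed identity. Writing the denominator as $a_nk^n\big(1+\sum_{i\ge1}(a_{n-i}/a_n)k^{-i}\big)$, inverting this power series by the standard geometric/multinomial formula, multiplying by $a_n^{-1}$ and by the numerator written as $k^n\sum_{i\ge0}c'_{n+1-i}k^{-i}$, and collecting powers of $k^{-1}$ produces the displayed double sum; the $j=0,1,2$ terms are read off to give the three explicit coefficients.

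\emph{Main obstacle.} Steps~1 and~3 are bookkeeping. The substance is Step~2: making the Riemann--Roch$\leftrightarrow$Ehrhart dictionary precise for \emph{virtual} polytopes (the divisor $\sum_{i=1}^db_i\cD_i$ is not ample, which is where Khovanskii--Pukhlikov virtual mixed volumes are needed), checking that $P_{v,q}$ being Delzant makes $\overline{\cX}$ smooth, and — above all — exploiting the smooth fibration $\overline{\cX}\to\PP^1$ to show that the a priori $(n{+}1)$-dimensional, $(d{+}2)$-divisor, $q$-dependent data of $E_{P_{v,q}}(k)$ collapses to the clean expression in $P_v'$, $\cD_1,\dots,\cD_d$ and the relative anticanonical class, with the "extra" contribution cancelling $E_P(k)$ exactly.
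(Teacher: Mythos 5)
Your proposal is correct and follows essentially the same route as the paper: the rooftop identity $\langle\Bc_k(P),v\rangle=\big(E_{P_{v,q}}(k)-(kq+1)E_P(k)\big)/(kE_P(k))$, Hirzebruch--Riemann--Roch with the toric Todd class $\prod_i\Td([\cD_i])$ on $X$ and $\overline{\cX}$, the conversion $D_1\cdots D_n=n!\,V(P_1,\ldots,P_n)$ extended to virtual polytopes, and formal inversion of the denominator series --- this is exactly the paper's Corollary \ref{combRR cor} combined with Lemma \ref{mixed volumes}. The only difference is cosmetic: where you peel off the $kq\,E_P(k)$ and $E_P(k)$ contributions from $\chi(\overline{\cX},k\overline{\cL})$ geometrically (iterated restriction sequences along $\cD_{d+2}$ and the collapse $\Td([\cD_{d+1}])\Td([\cD_{d+2}])=\pi^*\Td(\PP^1)$), the paper obtains the same identity $c_j=qa_{j-1}+a_j+c'_j$ by direct manipulation of the Bernoulli/Todd sums.
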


\subsection{Results for reflexive polytopes}

Another subclass of lattice polytopes is that of reflexive polytopes.

\begin{definition}\label{refdef}
{\rm\cite[Definition 2.3.12]{CLS11}}
    A lattice polytope is called {\it reflexive} if it contains the origin, and each of its facet is given by $\langle u,v\rangle=1$ where $v\in N$ is some primitive vector.
\end{definition}

\begin{remark}
    We are not aware of a relationship between being reflexive and being Delzant. For instance, the square with vertices $(\pm1,0)$ and $(0,\pm1)$ is reflexive but not Delzant, and the square with vertices $(\pm2,\pm2)$ and $(\pm2,\mp 2)$ is Delzant but not reflexive.
\end{remark}

Our third main result is the specialization of Theorem \ref{FirstMainThm}
to the reflexive setting. It is quite remarkable that in this
setting the {\it quantized barycenters are, to first order, 
colinear}. Furthermore, in the planar case, they are in fact {\it colinear}, 
and have a completely explicit expression.

\begin{theorem}
\lb{ReflexiveThm}
    Let $P$ be a reflexive polytope. Then $a_1(P)=\Bc(P)/2$ 
    (recall Problem \ref{ExpansionProb}), i.e.,
    $$
        \Bc_k(P)=\Bc(P)\left(1+\frac{1}{2k}\right)+O(k^{-2}).
    $$
    Moreover, if $\dim P=2$, then
    $$
        \Bc_k(P)=\frac{(k+1)(2k+1)\wvol(\partial P)}{4+2k(k+1)\wvol(\partial P)}\Bc(P).
    $$
\end{theorem}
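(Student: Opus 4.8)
\textbf{Proof proposal for Theorem \ref{ReflexiveThm}.}

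The plan is to specialize the general formula of Theorem \ref{FirstMainThm}. For the first-order statement I would start from
$$
a_1(P)=\frac{\wvol(\partial P)}{2\Vol(P)}\big(\wBc(\partial P)-\Bc(P)\big),
$$
and compute the two ingredients $\wvol(\partial P)$ and $\wBc(\partial P)$ using the reflexive structure. The key observation is that, for a reflexive polytope, each facet $F_i$ lies on the hyperplane $\langle u,v_i\rangle=1$ with $v_i\in N$ primitive, so the cone over $F_i$ with apex at the origin is a lattice simplex-like region whose lattice-normalized volume is controlled by $v_i$. Concretely, I would use the standard fact that for a facet $F_i$ on $\{\langle u,v_i\rangle=1\}$, one has $\Vol(\text{cone}(0,F_i))=\tfrac1n \wvol(F_i)$ relative to the ambient lattice normalization (since the lattice distance from $0$ to the supporting hyperplane is exactly $1$). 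Summing over facets gives $\Vol(P)=\tfrac1n\wvol(\partial P)$. For the barycenter: the barycenter of the cone over $F_i$ is $\tfrac{n}{n+1}$ times the barycenter of $F_i$ (scaling from apex $0$), so integrating $u$ over $P$ facet-by-facet yields
$$
\int_P u\,dx=\sum_i \frac{\wvol(F_i)}{n}\cdot\frac{n}{n+1}\,\wBc(F_i)=\frac{1}{n+1}\int_{\partial P}u\,d\wvol,
$$
i.e., $\Vol(P)\,\Bc(P)=\tfrac1{n+1}\wvol(\partial P)\,\wBc(\partial P)$. Combining with $\Vol(P)=\tfrac1n\wvol(\partial P)$ gives $\wBc(\partial P)=\tfrac{n+1}{n}\Bc(P)$, hence
$$
a_1(P)=\frac{\wvol(\partial P)}{2\Vol(P)}\Big(\tfrac{n+1}{n}-1\Big)\Bc(P)=\frac{n}{2}\cdot\frac1n\,\Bc(P)=\frac{\Bc(P)}{2},
$$
where I used $\wvol(\partial P)/\Vol(P)=n$. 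This is exactly the claimed first-order expansion; I expect this cone-decomposition bookkeeping to be the one place where care is needed to get the lattice normalizations right, but it is not deep.

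For the planar case I would argue more directly from \eqref{translation} rather than through the expansion, aiming to identify $\Bc_k(P)$ exactly. When $n=2$, Pick-type/Ehrhart data gives $E_P(k)=\Vol(P)k^2+\tfrac12\wvol(\partial P)k+1$. The idea is to compute $\sum_{u\in kP\cap M}u$ directly using the reflexive structure: decompose $kP$ into the lattice triangles $k\,\mathrm{conv}(0,F_i)$ over the facets (edges) $F_i$, and within each such triangle sum the lattice points. Because every edge sits at lattice distance $1$ from the origin, $k\,\mathrm{conv}(0,F_i)$ is a ``unimodular-height'' triangle and its lattice-point sum can be evaluated by a one-variable summation along the $k+1$ parallel slices $\langle u,v_i\rangle=0,1,\dots,k$; each slice is a dilate of $F_i$ (scaled by its height) whose lattice-point count and first moment are linear/quadratic in the slice index. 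Summing over the $k+1$ slices produces the combination $\sum_{j=0}^k j(k-j)(\cdots)$ and $\sum_{j=0}^k(\cdots)$, which telescopes into an expression proportional to $(k+1)(2k+1)$ (from $\sum j^2$) — this is the source of the factor $(k+1)(2k+1)$ in the statement. Assembling the edge contributions with the inclusion–exclusion along shared vertices (the origin is common to all cones, interior lattice points on edges are shared by two triangles, etc.) and dividing by $kE_P(k)$ should yield precisely
$$
\Bc_k(P)=\frac{(k+1)(2k+1)\wvol(\partial P)}{4+2k(k+1)\wvol(\partial P)}\,\Bc(P).
$$

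The main obstacle I anticipate is the exact planar computation: making the slice-by-slice summation over the cone decomposition rigorous, correctly handling the overlap corrections along edges and at the origin so nothing is double-counted, and verifying that the cross terms recombine into the single scalar multiple of $\Bc(P)$ rather than producing a genuinely vector-valued correction. A cleaner route, which I would try first, is to avoid the explicit decomposition: use the first part to know $a_1(P)=\Bc(P)/2$, use the Ehrhart-reciprocity identity $E_P(-k)=E_{\Int P}(k)=\#((\Int kP)\cap M)$ together with the fact that for a reflexive polytope $\Int(kP)\cap M=(k-1)P\cap M$ (the characteristic property of reflexivity), and translate \eqref{translation} into a functional equation for the numerator polynomials $E_{P(c;i)}(k)$. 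That symmetry should force the rational function $\langle\Bc_k(P),v\rangle/\langle\Bc(P),v\rangle$ to have numerator and denominator of the constrained shape $\alpha(k+1)(2k+1)$ and $\beta+\gamma k(k+1)$, and then matching the known leading coefficient ($\Vol(P)$), the value $a_0=\Bc(P)$, and $a_1=\Bc(P)/2$ pins down $\alpha,\beta,\gamma$ uniquely, giving the stated closed form without any slice summation. I would present the reflexivity-symmetry argument as the main proof and relegate the direct summation to a remark or omit it.
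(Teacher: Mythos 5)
Your proof of the first statement is correct and genuinely different from the paper's. You decompose $P$ into cones over its facets with apex at the origin; since every facet lies at lattice distance $1$ from the origin, this gives $\Vol(P)=\tfrac1n\wvol(\partial P)$ and, via the pyramid-centroid formula, the identity $\wBc(\partial P)=\tfrac{n+1}{n}\Bc(P)$, which you then feed into the formula for $a_1(P)$ from Theorem \ref{FirstMainThm}. The paper never computes $\wBc(\partial P)$: it instead observes that the rooftop polytope $P_i=\{(u,h):u\in P,\ -1\le h\le\langle u,v_i\rangle+1\}$ is itself reflexive of dimension $n+1$, applies Ehrhart reciprocity (Corollary \ref{reflexive reciprocity} and Proposition \ref{Ehrhart polynomial reflexive}) to both $P$ and $P_i$, and reads off the expansion of $\langle\Bc_k(P),v_i\rangle$ from the ratio of the two Ehrhart polynomials. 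Your route is more geometric and has the bonus of proving the identity $\wBc(\partial P)=(1+\tfrac1n)\Bc(P)$ directly, which the paper only obtains a posteriori by comparing Theorems \ref{FirstMainThm} and \ref{Bc_k expansion}; the paper's route has the advantage of generalizing immediately to the exact planar formula.

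For the planar case your first route (direct summation) is essentially the paper's proof of Theorem \ref{barycenter ratio}, just organized by cones over edges rather than by the boundary shells $\partial(iP)$, $i=1,\dots,k$ (these are two parametrizations of the same double sum, since the level-$j$ slices of all your cones assemble into $\partial(jP)$); the overlap bookkeeping you worry about is exactly what the paper handles by noting each vertex of $iP$ is counted twice and weighting endpoints by $\tfrac12$. Your preferred ``cleaner route,'' however, has a gap as stated: Ehrhart reciprocity applied to $E_{P(c;i)}$ does not directly constrain its shape, because the translated rooftop $P(c;i)$ is \emph{not} reflexive, so $E_{P(c;i)}(-k)\ne(-1)^{n+1}E_{P(c;i)}(k-1)$ in general. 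The fix — and this is what the paper's remark after Theorem \ref{barycenter ratio} does — is to use the reflexive rooftop $P_i$ above (with $h$ ranging over $[-1,\langle u,v_i\rangle+1]$ rather than $[0,u_i+c_i]$), which \emph{is} a $3$-dimensional reflexive polytope; Proposition \ref{Ehrhart polynomial reflexive} with $n=3$ then determines every coefficient of $E_{P_i}$ from $\Vol(P_i)=(\langle\Bc(P),v_i\rangle+2)\Vol(P)$ alone, and the exact rational function drops out with no coefficient-matching needed. So the ingredients you list (reciprocity, $\Int(kP)\cap M=(k-1)P\cap M$, the leading coefficients) do suffice, but only after you route them through the correctly normalized rooftop polytope; without that, the ``functional equation for $E_{P(c;i)}$'' you invoke is not available.
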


We give two completely different proofs of the two-dimensional part
of Theorem \ref{ReflexiveThm}.

\subsection{Applications to asymptotics of \texorpdfstring{$\delta_k$}{delta k}-invariants}

Theorem \ref{FirstMainThm} together with Corollary \ref{toric delta_k} answer the first part of Problem \ref{deltaProb} 
as well as determine the first non-trivial coefficient for a general
polarized (possibly singular) toric variety.

\begin{corollary}
\label{MainAsympThm}
Let $X$ be a (possibly singular) toric variety given by a fan $\Delta$ in a lattice $N\simeq\ZZ^n$. 
Let $v_1,\ldots,v_d\in N$ denote the primitive generators of rays in $\Delta$.
Let $L$ be an ample toric line bundle with the associated convex polytope (recall Theorem \ref{toric-corresp})
\begin{align*}
    P=\{x\in M_\mathbb{R}~:~ \langle x,v_i\rangle\geq -b_i,\ i=1,\ldots,d\}
\end{align*}
for some $b_i\in\RR$, where $M_\RR:=\Hom(N,\ZZ)\otimes\RR$. Let $I$ be the set of indices $i$ such that $v_i$ and $b_i$ compute $\delta$ $($see \textup{(\ref{toric delta})}$)$. Then  there is
a complete asymptotic expansion for $\delta_k$
as in Problem \ref{deltaProb}, with
$$
a_0(X,L)=\delta=\frac{1}{\langle\Bc(P),v_i\rangle+b_i},\; i\in I, \q
a_1(X,L)=-\frac{\wvol(\partial P)}{2\Vol(P)}\max_{i\in I}\langle\wBc(\partial P)-\Bc(P),v_i\rangle \delta^2.
$$
In particular,
    \begin{equation}\label{CompAsymExp}
        \delta_k=\delta-\frac{\wvol(\partial P)}{2\Vol(P)}\max_{i\in I}\langle\wBc(\partial P)-\Bc(P),v_i\rangle \delta^2k^{-1}+O(k^{-2}),
    \end{equation}
where $\wvol$ is the normalized volume of the boundary $\partial P$ and $\wBc$ is the barycenter of $\partial P$ with respect to $\wvol$ $($see \textup{Definition \ref{normalized vol}}$)$.

In fact, for sufficiently large $k$, \eqref{CompAsymExp} is a complete asymptotic expansion of a rational function of $k$ with
both numerator and denominator polynomial of degree $n$ in $k$.
\end{corollary}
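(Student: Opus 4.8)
The plan is to reduce everything to Theorem \ref{FirstMainThm} via the Blum--Jonsson / Rubinstein--Tian--Zhang formula \eqref{RTZEq} (in its general-polarization form \eqref{toric delta_k formula} established in the Appendix). First I would record that formula in the shape
$$
\delta_k^{-1}=\max_{i\in\{1,\ldots,d\}}\big[\langle\Bc_k(P),v_i\rangle+b_i\big],
$$
so that the $\delta_k$ are determined entirely by the scalars $f_i(k):=\langle\Bc_k(P),v_i\rangle+b_i$. By Theorem \ref{FirstMainThm}, each coordinate $\Bc_{k,i}(P)$ is a rational function of $k$ with numerator and denominator polynomial of degree $n$ (the denominator being $kE_P(k)$, which has degree $n+1$, against a numerator $E_{P(c;i)}(k)-E_{P\times[0,c_i]}(k)$ also of degree $n+1$, with the top coefficients cancelling after the $-c$ shift so that the limit is finite); hence each $f_i(k)$ is such a rational function, and in particular admits a complete asymptotic expansion $f_i(k)=\sum_{j\ge0}f_{i,j}k^{-j}$ with $f_{i,0}=\langle\Bc(P),v_i\rangle+b_i$ and $f_{i,1}=\frac{\wvol(\partial P)}{2\Vol(P)}\langle\wBc(\partial P)-\Bc(P),v_i\rangle$ by the formula for $a_1(P)$.

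The next step is to understand the maximum. Let $m:=\max_i f_{i,0}=\delta^{-1}$ and let $I=\{i:f_{i,0}=m\}$ be the set of indices computing $\delta$. For $i\notin I$ we have $f_{i,0}<m$, so $f_i(k)<f_j(k)$ for all large $k$ and any $j\in I$; thus for $k\gg0$ the max in \eqref{RTZEq} is attained only among $i\in I$, and
$$
\delta_k^{-1}=\max_{i\in I}f_i(k).
$$
Among the finitely many rational functions $\{f_i\}_{i\in I}$, the pointwise maximum for large $k$ is itself eventually equal to a single one of them — indeed the difference of any two distinct rational functions is a rational function with finitely many zeros, hence of one sign near $+\infty$ — so there is $i_0\in I$ with $\delta_k^{-1}=f_{i_0}(k)$ for all sufficiently large $k$. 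Consequently $\delta_k=1/f_{i_0}(k)$ is a rational function of $k$ for $k\gg0$; since $f_{i_0}$ has numerator and denominator of degree $n+1$ in $k$ and $f_{i_0,0}=\delta^{-1}\neq0$ (note $\delta>0$ always), after clearing the common factor of $k$ coming from the shared denominator $kE_P(k)$ one sees $\delta_k$ is a ratio of two degree-$n$ polynomials in $k$. Expanding $1/f_{i_0}(k)$ as a geometric series then gives the complete asymptotic expansion, with
$$
a_0=\frac{1}{f_{i_0,0}}=\delta,\qquad a_1=-\frac{f_{i_0,1}}{f_{i_0,0}^2}=-\frac{\wvol(\partial P)}{2\Vol(P)}\langle\wBc(\partial P)-\Bc(P),v_{i_0}\rangle\,\delta^2.
$$
Finally, since $f_{i_0}(k)=\max_{i\in I}f_i(k)$ for large $k$ and the functions agree to zeroth order, comparing first-order terms forces $f_{i_0,1}=\max_{i\in I}f_{i,1}$ (any $i\in I$ with strictly larger first-order coefficient would overtake $f_{i_0}$), which rewrites $a_1$ exactly as $-\frac{\wvol(\partial P)}{2\Vol(P)}\max_{i\in I}\langle\wBc(\partial P)-\Bc(P),v_i\rangle\,\delta^2$, as claimed in \eqref{CompAsymExp}.

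The main obstacle, and the one point deserving care rather than being routine, is the degree bookkeeping in the reduction "$f_i(k)$ is a rational function with numerator and denominator of degree $n$": one must verify that the apparent pole at $k=0$ and the apparent degree-$(n+1)$ behavior in \eqref{translation} genuinely cancel, i.e., that the leading coefficients of $E_{P(c;i)}(k)-E_{P\times[0,c_i]}(k)$ and of $kE_P(k)$ line up so that after dividing and subtracting $c$ the result is a ratio of degree-$n$ polynomials with nonzero constant term $\langle\Bc(P),v_i\rangle+b_i$; this is where the Ehrhart-polynomial identities $a_n(P(c;i))$ versus $a_n(P\times[0,c_i])$ from Theorem \ref{Ehrhart polynomial} do the work. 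Everything else — the eventual-constancy of the argmax, the eventual identification of $\delta_k$ with a single rational function, and the extraction of $a_0,a_1$ — is a short argument using only that a nonzero rational function has eventually constant sign at infinity.
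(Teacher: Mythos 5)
Your proposal is correct and follows essentially the same route as the paper: reduce to $\delta_k^{-1}=\max_i(\langle\Bc_k(P),v_i\rangle+b_i)$ via the Appendix formula, feed in the expansion of $\Bc_k(P)$ from Theorem \ref{FirstMainThm}, observe the max localizes to $I$ and then to the first-order coefficients, and conclude rationality from the fact that a pointwise max of finitely many rational functions is eventually a single one of them. Your treatment is in fact slightly more explicit than the paper's on the eventual identification of the dominating index and on the degree count for the numerator and denominator, both of which the paper handles with a brief lexicographic-order remark.
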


In the case of most interest, namely when $X$ is Fano 
polarized by $-K_X$, Theorem \ref{ReflexiveThm}
and \eqref{RTZEq} give a more explicit result.

\begin{corollary}
\label{asym toric fano delta_kMain}
    For a toric Fano (possibly singular) variety $X$ and all $k\in\NN$,
    $$
    \delta_k(-K_X)=\delta(-K_X)-\frac{1}{2k}\delta(-K_X)(1-\delta(-K_X))+O(k^{-2}).
    $$
    Moreover, if $\dim X=2$,
    \begin{equation*}
        \delta_k(-K_X)=\left(1+\frac{(k+1)(2k+1)K_X^2}{4+2k(k+1)K_X^2}\left((\delta(-K_X))^{-1}-1\right)\right)^{-1}.
    \end{equation*}
\end{corollary}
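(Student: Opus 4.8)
The plan is to derive Corollary \ref{asym toric fano delta_kMain} from Theorem \ref{ReflexiveThm} together with the Rubinstein--Tian--Zhang formula \eqref{RTZEq}, specialized to the Fano polarization $L = -K_X$. The key observation is that when $X$ is a toric Fano variety the associated polytope $P = P_{X,-K_X}$ is reflexive, so that every facet has the form $\langle\cdot,v_i\rangle = 1$; hence in \eqref{RTZEq} all the constants $b_i$ equal $1$ and the formula reads $\delta_k^{-1} = \max_i\big[\langle\Bc_k(P),v_i\rangle + 1\big] = 1 + \max_i\langle\Bc_k(P),v_i\rangle$. The same holds in the limit: $\delta^{-1} = 1 + \max_i\langle\Bc(P),v_i\rangle$.

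First I would treat the general-dimension statement. By Theorem \ref{ReflexiveThm}, $\Bc_k(P) = \Bc(P)(1 + \tfrac1{2k}) + O(k^{-2})$, so for each index $i$,
\[
\langle\Bc_k(P),v_i\rangle = \langle\Bc(P),v_i\rangle + \frac{1}{2k}\langle\Bc(P),v_i\rangle + O(k^{-2}).
\]
Since the leading term is independent of $k$ and the correction is a fixed positive (or negative) multiple of it, for all sufficiently large $k$ the maximizing index is the same as the one achieving $\max_i\langle\Bc(P),v_i\rangle$; call its value $\mu := \delta^{-1} - 1$. Then $\delta_k^{-1} = 1 + \mu + \tfrac{\mu}{2k} + O(k^{-2})$, and inverting the series gives
\[
\delta_k = \frac{1}{(1+\mu)(1 + \tfrac{\mu}{2k(1+\mu)} + O(k^{-2}))} = \delta\Big(1 - \frac{\mu}{2k(1+\mu)} + O(k^{-2})\Big) = \delta - \frac{1}{2k}\,\delta^2\mu + O(k^{-2}).
\]
Substituting $\mu = \delta^{-1} - 1 = (1-\delta)/\delta$ yields $\delta_k = \delta - \tfrac1{2k}\delta^2\cdot\tfrac{1-\delta}{\delta} + O(k^{-2}) = \delta - \tfrac1{2k}\delta(1-\delta) + O(k^{-2})$, which is exactly the claimed first-order expansion (the claim that it holds "for all $k\in\NN$" should be read as: the $O(k^{-2})$ remainder is genuinely $O(k^{-2})$, which follows from the existence of the full expansion in Theorem \ref{FirstMainThm}; the displayed three-term truncation is an asymptotic statement).

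For the two-dimensional refinement I would use the exact planar formula in Theorem \ref{ReflexiveThm}: when $\dim P = 2$, $\Bc_k(P) = \lambda_k\,\Bc(P)$ with $\lambda_k = \tfrac{(k+1)(2k+1)\wvol(\partial P)}{4 + 2k(k+1)\wvol(\partial P)}$. Crucially, because $\lambda_k > 0$ the maximizing index in \eqref{RTZEq} is $k$-independent and equals the one for $\Bc(P)$ itself, so $\delta_k^{-1} = 1 + \lambda_k\langle\Bc(P),v_{i_0}\rangle = 1 + \lambda_k(\delta^{-1}-1)$ for \emph{all} $k$. It remains only to identify $\wvol(\partial P)$ with $K_X^2$: under the toric correspondence of Theorem \ref{toric-corresp} with $L = -K_X$, the surface degree $K_X^2 = (-K_X)^2$ equals twice the lattice-normalized boundary measure of $P$, i.e. $K_X^2 = \wvol(\partial P)$ — this is the standard dictionary entry $L^n = n!\,\Vol(P)$ in dimension $n$, combined with $a_{n-1} = \tfrac12\wvol(\partial P)$ from Theorem \ref{Ehrhart polynomial} and the adjunction/Riemann--Roch identification of the degree-$(n-1)$ Ehrhart coefficient with $\tfrac12(-K_X)\cdot L^{n-1}$; for $n=2$, $L=-K_X$, this gives $\wvol(\partial P) = K_X^2$ directly. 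Plugging in gives $\delta_k^{-1} = 1 + \tfrac{(k+1)(2k+1)K_X^2}{4+2k(k+1)K_X^2}(\delta(-K_X)^{-1}-1)$, which is the stated formula after taking reciprocals.

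\textbf{Main obstacle.} The routine part is the series inversion; the one genuinely substantive point — and the step I expect to require the most care — is the interchange of "$\max$" with the asymptotic expansion, i.e.\ verifying that the index $i$ maximizing $\langle\Bc_k(P),v_i\rangle$ stabilizes to the index maximizing $\langle\Bc(P),v_i\rangle$. This is immediate once one knows $\Bc_k(P)$ is a positive scalar multiple of $\Bc(P)$ (true exactly in the planar case, true to first order in general by Theorem \ref{ReflexiveThm}), since scaling a vector by a positive constant does not change which linear functional $\langle\cdot,v_i\rangle$ is largest; but in the general-dimension case one must be slightly careful that the $O(k^{-2})$ terms (which need not be colinear with $\Bc(P)$) do not disturb the argmax — this is fine for large $k$ because the gap between the top value and the runner-up is either zero (then any maximizer works and the first-order terms, being colinear, break ties consistently) or strictly positive (then it dominates the $O(k^{-1})$ corrections). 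A secondary bookkeeping point is the precise normalization in the identity $\wvol(\partial P) = K_X^2$, which one should state once and cite from the Ehrhart/Riemann--Roch dictionary rather than re-derive.
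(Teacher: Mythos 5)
Your proof is correct and follows essentially the same route as the paper: Theorem \ref{ReflexiveThm} (resp.\ Theorem \ref{barycenter ratio}) combined with \eqref{RTZEq}, plus the identification $\wvol(\partial P)=K_X^2$ (which the paper obtains from $h^0(X,-K_X)=1+K_X^2$ rather than from the Ehrhart coefficient $a_{n-1}$, but the two are equivalent). The argmax-stabilization issue you flag as the main obstacle is in fact a non-issue even in general dimension: since the maximum over a finite index set is $1$-Lipschitz, one has $\max_i\big[\langle\Bc(P),v_i\rangle\big(1+\tfrac{1}{2k}\big)+O(k^{-2})\big]=\big(1+\tfrac{1}{2k}\big)\max_i\langle\Bc(P),v_i\rangle+O(k^{-2})$ directly, which is exactly how the paper's proof of Theorem \ref{asym toric fano delta_k} proceeds.
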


\subsection{Test configurations, higher Donaldson--Futaki invariants,
and Tian's stabilization problem}

Finally, we make contact with the much-studied notion of test configurations \cite{Tian97,Don02}. We show that in the case $P$ is Delzant, the coefficients $a_i(P)$ conjectured by Problem \ref{ExpansionProb}
and established by Theorem \ref{smooth theorem} are closely related to the higher
Donaldson--Futaki invariants of certain test configurations.

\begin{theorem}\label{tc main thm}
Let $P\subset M\otimes_\ZZ\RR$ be a lattice polytope. Let $(X,L)$
be the associated (possibly singular) toric variety given by Theorem \ref{toric-corresp}. Let
$v\in N$ be arbitrary. Then 
$$
\langle a_i(P),v\rangle=DF_i\left(\cX,\cL,\sigma_{q(v)}\right),\q i\in\NN\cup\{0\},
$$
where $\left(\cX,\cL,\sigma_{q(v)}\right)$ denotes a product test configuration
\eqref{ProdTC}
constructed in \S\ref{test config def}. 
\end{theorem}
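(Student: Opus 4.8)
The plan is to match the two sides term-by-term against the expansion produced by Theorem \ref{smooth theorem}, by identifying the higher Donaldson--Futaki invariants $DF_i(\cX,\cL,\sigma_{q(v)})$ of the product test configuration with the coefficients appearing there. First I would recall the definition of $DF_i$ from \S\ref{test config def}: for a test configuration $(\cX,\cL,\sigma)$ the total weight $w_k$ of the induced $\CC^\star$-action on $H^0(\cX_0,\cL_0^k)$ and the dimension $d_k=h^0(\cX_0,\cL_0^k)$ are both polynomials in $k$ for $k\gg 0$, and $w_k/(kd_k)$ admits an asymptotic expansion $\sum_i DF_i k^{-i}$. In the product case $(\cX,\cL)=(X\times\PP^1, \mathrm{pr}_1^\star L\otimes\mathrm{pr}_2^\star\cO(1))$ with the $\CC^\star$-action generated by the one-parameter subgroup $\sigma_{q(v)}$ corresponding to $v\in N$ (shifted by $q$ as in \eqref{qEq}), the combinatorics are governed precisely by the polytope $P_{v,q}$ of \eqref{PvqEq}: $d_k=\#(kP\cap M)=E_P(k)$ and $w_k$ is, up to the normalization by $q$, the sum over lattice points of $kP$ of the linear functional $\langle\cdot,v\rangle+q$, which by a standard Ehrhart/Brion--Vergne argument (the same bookkeeping used to pass from $P_{v,q}$ to $P_v'$ in the text) equals $E_{P_{v,q}}(k)-q E_P(k)$, hence is $\#$-counted by the virtual polytope $P_v'=P_{\sum_i b_i\cD_i}=P_{v,q}-qP_{\cD_{d+2}}$.

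Next I would observe that this is exactly the ratio computed in Theorem \ref{FirstMainThm}, formula \eqref{translation}: $\langle\Bc_k(P),v\rangle = (E_{P_{v,q}}(k)-E_{P\times[0,q]}(k))/(kE_P(k))$, which after subtracting the constant $q$ from both numerator slots matches $w_k/(kd_k)$. Therefore the generating polynomials agree: the numerator $\sum_j c'_{j+1}k^j$ of Theorem \ref{smooth theorem} is the Ehrhart polynomial of $P_v'$ (this is where the hypothesis that $P$ is Delzant, so $X$ is smooth and the toric Riemann--Roch / Khovanskii--Pukhlikov formula applies to express Ehrhart coefficients via mixed volumes of virtual polytopes, is used), and the denominator $\sum_j a_jk^j=E_P(k)$. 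Since $DF_i$ is by definition the $k^{-i}$ coefficient of $w_k/(kd_k)$ and $\langle a_i(P),v\rangle$ is by Theorem \ref{smooth theorem} the $k^{-i}$ coefficient of $\langle\Bc_k(P),v\rangle=(\sum c'_{j+1}k^j)/(\sum a_j k^j)$, the two expansions are the term-by-term expansion of the same rational function of $k$, and the claimed identity $\langle a_i(P),v\rangle=DF_i(\cX,\cL,\sigma_{q(v)})$ follows for every $i\ge 0$. The $q$-shift cancels because it contributes $q$ to $a_0$ on both sides and $0$ to all higher coefficients (and indeed $\Bc_k$ is translation-covariant by \eqref{translation}), so the normalization of $DF_i$ must be chosen to subtract the same shift — I would make this normalization explicit in \S\ref{test config def} and then the cancellation is immediate.

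The main obstacle I anticipate is \emph{not} the algebra of matching coefficients — that is forced once the two polynomials are identified — but rather checking that the combinatorial object $(\cX,\cL,\sigma_{q(v)})$ built in \S\ref{test config def} genuinely is a (product) test configuration in the technical sense (flatness of $\cX\to\PP^1$ or $\CC$, $\cL$ relatively ample, the central fibre with its induced polarization and $\CC^\star$-action) and that its Donaldson--Futaki invariants are computed by the weight $w_k$ on $H^0(\cX_0,\cL_0^k)$ with the sign and normalization conventions under which $DF_0$ recovers the classical Donaldson--Futaki invariant, consistent with Mabuchi's theorem $DF_0=\langle\Bc(P),v\rangle$ (up to normalization). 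For a \emph{product} configuration $X\times\PP^1$ the central fibre is just $X$ itself and flatness is automatic, so this reduces to a careful comparison of the torus-weight conventions for the $\CC^\star$-action generated by $v$ acting on sections of $L^k$, i.e. to verifying that the grading of $\bigoplus_k H^0(X,L^k)$ by the $v$-weight is the one whose generating function is the Ehrhart series of $P_{v,q}$. This is standard toric geometry (the character lattice decomposition $H^0(X,L^k)=\bigoplus_{u\in kP\cap M}\CC\cdot\chi^u$, on which $\sigma$ acts on $\chi^u$ with weight $\langle u,v\rangle$), but it must be done with care to get the constant $q$ and the sign right; once it is in place, the proof is complete.
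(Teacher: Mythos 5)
Your core computation is exactly the paper's: the torus-invariant sections of $\cL_0^k$ are indexed by $kP\cap M$, the section attached to $u$ has $\sigma_{q(v)}$-weight $k\langle u,v\rangle$ (the $q$-shift in the lift cancels the $kq$ coming from the height coordinate of the rooftop facet), so $w_k/(k\,h^0(\cX_0,\cL_0^k))=\langle\Bc_k(P),v\rangle$ on the nose, and the identity $\langle a_i(P),v\rangle=DF_i$ is then just the statement that both families of coefficients expand the same rational function of $k$. That is precisely how the paper argues. Two corrections, though. First, your proof as written routes the identification of the coefficients $\langle a_i(P),v\rangle$ through Theorem \ref{smooth theorem} and toric Riemann--Roch, which requires $P$ Delzant ($X$ smooth); but Theorem \ref{tc main thm} is stated for an arbitrary lattice polytope and possibly singular $X$. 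The correct reference is Theorem \ref{FirstMainThm} (equivalently \eqref{translation} and Corollary \ref{Bc_k=0 for all k if large k}), which establishes for \emph{every} lattice polytope that $\langle\Bc_k(P),v\rangle$ is a ratio of Ehrhart polynomials and hence that the $a_i(P)$ exist and are its expansion coefficients; once you quote that instead, no smoothness is used anywhere and you get the full statement. (Theorem \ref{smooth theorem} is only needed if you additionally want the $DF_i$ expressed via mixed volumes, which is the content of the corollary following the theorem, not of the theorem itself.) Second, a bookkeeping slip: the unnormalized weight sum is $\sum_{u\in kP\cap M}\bigl(\langle u,v\rangle+kq\bigr)=E_{P_{v,q}}(k)-E_P(k)$, not $E_{P_{v,q}}(k)-qE_P(k)$; after the $q$-shift in $\sigma_{q(v)}$ one has simply $w_k=\sum_{u\in kP\cap M}\langle u,v\rangle$, which is what makes the final ratio equal $\langle\Bc_k(P),v\rangle$ with no leftover constant. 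Your concerns about flatness and the product structure are handled by the construction in \S\ref{test config def}: $\cX\cong X\times\CC$ by the subfan description (note the total space is \emph{not} $X\times\PP^1$ unless $v=0$; only the open part over $\CC$ is a product), so $\cX_0\cong X$, $\cL_0\cong L$, and $h^0(\cX_0,\cL_0^k)=E_P(k)$.
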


In particular,
if $(X,L)$ is K-semistable then $DF_1$ vanishes for
all product test configurations \cite[Remark 2.13(2)]{Blu22}, thus $a_1(P)=0$ by
Theorem \ref{tc main thm}, i.e., 
    $\wBc(\partial P)=\Bc(P)$ by Theorem \ref{FirstMainThm},
    recovering a well-known result of Donaldson \cite[Proposition 4.2.1]{Don02}. 
In the reflexive case the additional relation $\wBc(\partial P)=(1+\frac{1}{n})\Bc(P)$ (by Theorems \ref{FirstMainThm} and \ref{Bc_k expansion}) forces $\wBc(\partial P)=\Bc(P)=0$, generalizing one direction of a result of Ono \cite[Corollary 1.7]{Ono11}
to the non-smooth setting (notice Ono assumes $P$ is reflexive
{\it and Delzant}), that is, asymptotic Chow semistability implies $\Bc_k(P)=0$ for all $k$.
By \eqref{RTZEq}, $\Bc_k(P)=0$ if and only if $\delta_k=1$. 
Conversely, if $\delta_k$ stabilizes then by Corollary \ref{asym toric fano delta_kMain} $\delta=1$, and hence
$\delta_k=1$ for large $k$, that by the previous sentence means 
$\Bc_k=0$ for large $k$. In this case we also know by Theorem \ref{FirstMainThm} that $\Bc_k=0$ for all $k$, that in turn by the other direction of Ono's result (only valid if $P$ is also Delzant) implies asymptotic Chow semistability.  In sum we have the following solution of Problem \ref{TianProb} for toric Fanos:

\begin{corollary}
\label{asymChow}
    $\bullet\;$ For a toric Fano manifold $X$, $\delta_k$ stabilizes in $k$ if and only if $(X,-K_X)$ is asymptotically
    Chow semistable. In this case $\delta_k=\delta=1$ for all $k\in\NN$.
    
    $\bullet\;$ For (possibly singular) toric Fano variety $X$, $\delta_k$ stabilizes in $k$ if and only if $\delta_k=\delta=1$ for all $k\in\NN$.
\end{corollary}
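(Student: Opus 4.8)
The plan is to assemble the statement from three previously established ingredients: the expansion of $\delta_k$ in the toric Fano case (Corollary \ref{asym toric fano delta_kMain}), the stabilization dichotomy of Theorem \ref{FirstMainThm} applied to $\Bc_k(P)$ via \eqref{RTZEq}, and Ono's characterization of asymptotic Chow semistability in terms of vanishing of all $\Bc_k(P)$ (valid in the smooth/Delzant case). First I would treat the ``only if'' direction. Suppose $\delta_k$ stabilizes, i.e., $\delta_k=\delta_{k_0}$ for all $k\geq k_0$. Feeding this into the asymptotic expansion $\delta_k=\delta-\tfrac{1}{2k}\delta(1-\delta)+O(k^{-2})$ from Corollary \ref{asym toric fano delta_kMain}, the $O(1/k)$ term must vanish, forcing $\delta(1-\delta)=0$; since $\delta>0$ (it is an lct-type invariant, or simply because $\delta=1/(\langle\Bc(P),v_i\rangle+1)>0$ as $P$ is reflexive and contains $0$ in its interior), we get $\delta=1$, hence $\delta_k\to 1$, hence the stabilized value is $1$, so $\delta_k=1$ for all $k\geq k_0$. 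Then I invoke the last sentence of Theorem \ref{FirstMainThm}: by \eqref{RTZEq}, $\delta_k=1$ is equivalent to $\Bc_k(P)=0$, so $\Bc_k(P)=0$ for infinitely many (in particular $\geq n+1$) values of $k$, whence $\Bc_k(P)=\Bc(P)=0$ for \emph{all} $k\in\NN$, equivalently $\delta_k=1$ for all $k$.

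Next the ``if'' direction of the first bullet. Assume $(X,-K_X)$ is asymptotically Chow semistable. By Ono's result (the direction requiring $P$ Delzant, i.e., $X$ smooth), asymptotic Chow semistability implies $\Bc_k(P)=0$ for all $k$; via \eqref{RTZEq} this gives $\delta_k=1$ for all $k$, which certainly stabilizes, and moreover $\delta=\lim_k\delta_k=1$. Combined with the ``only if'' direction, this proves the first bullet: for toric Fano manifolds, $\delta_k$ stabilizes $\iff$ asymptotically Chow semistable, and in that case $\delta_k=\delta=1$ for all $k$. For the second bullet (possibly singular $X$), Ono's result is unavailable, but the argument above for ``only if'' used only Corollary \ref{asym toric fano delta_kMain} and the Theorem \ref{FirstMainThm}/\eqref{RTZEq} package, both of which hold for arbitrary toric Fano varieties; so $\delta_k$ stabilizes $\implies \delta_k=\delta=1$ for all $k$. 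The converse is trivial. Hence the second bullet.

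The one point that needs a little care, and which I expect to be the main obstacle, is making sure the hypotheses of \eqref{RTZEq} and of Theorem \ref{FirstMainThm} genuinely apply in the stated generality. The formula \eqref{RTZEq} is stated for the Fano polytope $P=P_{X,-K_X}$ with all $b_i=1$, and Theorem \ref{FirstMainThm}'s rigidity clause requires $\Bc_k(P)$ to be constant for $n+1$ values of $k$ — here that constant is $0$, and one must check that $\delta_k=1$ for $k\geq k_0$ really does supply $\geq n+1$ such values (it supplies infinitely many). A second subtlety is the direction of Ono's theorem being used: I am using ``asymptotically Chow semistable $\Rightarrow$ all $\Bc_k(P)=0$'', which is the direction that does \emph{not} require Delzant-ness in the cited strong form — but the excerpt's discussion preceding Corollary \ref{asymChow} is explicit that the \emph{reverse} implication (all $\Bc_k=0 \Rightarrow$ asymptotically Chow semistable) is the one needing $P$ Delzant, so I should phrase the first bullet's proof so that the Delzant hypothesis is invoked only where the excerpt says it is needed. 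With that bookkeeping in place the corollary follows formally from the cited results; there is no new estimate or construction required.
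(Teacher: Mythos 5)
Your treatment of the second bullet and of the chain ``$\delta_k$ stabilizes $\Rightarrow \delta=1$ (via the $k^{-1}$ term of Corollary \ref{asym toric fano delta_kMain}) $\Rightarrow \delta_k=1$ for large $k$ $\Rightarrow \Bc_k(P)=0$ for large $k$ $\Rightarrow$ (by the rigidity clause of Theorem \ref{FirstMainThm}) $\Bc_k(P)=0$, i.e.\ $\delta_k=\delta=1$, for all $k$'' is exactly the paper's argument, and your uses of \eqref{RTZEq} and of the expansion are correct. However, there is a genuine gap in the first bullet: you never establish the implication ``$\delta_k$ stabilizes $\Rightarrow (X,-K_X)$ is asymptotically Chow semistable.'' Your ``only if'' argument terminates at ``$\Bc_k(P)=0$ for all $k$,'' and your ``if'' argument establishes ``Chow semistable $\Rightarrow$ stabilizes''; the sentence ``Combined with the only if direction, this proves the first bullet'' is a non sequitur, because the two implications you have actually proved are ``stabilizes $\Rightarrow \delta_k\equiv 1$'' and ``Chow semistable $\Rightarrow$ stabilizes,'' which do not compose to yield the remaining direction of the equivalence. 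The missing step is precisely the \emph{converse} direction of Ono's theorem: $\Bc_k(P)=0$ for all $k$ implies asymptotic Chow semistability, valid exactly because $P$ is Delzant when $X$ is smooth. The paper invokes this explicitly (``that in turn by the other direction of Ono's result (only valid if $P$ is also Delzant) implies asymptotic Chow semistability''), and it is the reason the first bullet is restricted to manifolds.

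Relatedly, your bookkeeping of which direction of Ono's theorem requires Delzant-ness is inconsistent: in the ``if'' direction you cite ``the direction requiring $P$ Delzant'' for ``Chow semistable $\Rightarrow \Bc_k\equiv 0$,'' while in your closing paragraph you assert that this same direction ``does not require Delzant-ness.'' In the paper, Ono's theorem as cited assumes $P$ reflexive \emph{and} Delzant; it is the paper's own argument (vanishing of $DF_1$ for product test configurations via Theorem \ref{tc main thm}, combined with the reflexive identity $\wBc(\partial P)=(1+\frac{1}{n})\Bc(P)$) that generalizes the \emph{forward} direction to the non-Delzant setting. For the first bullet $X$ is smooth, so this confusion is harmless there; but the direction you failed to invoke at all --- $\Bc_k\equiv 0\Rightarrow$ asymptotically Chow semistable --- is the one that cannot be dispensed with and that genuinely needs the Delzant hypothesis. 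Once you insert that citation at the end of your ``only if'' argument, the proof coincides with the paper's.
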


Corollary
\ref{asymChow}
is in interesting contrast to the situation for the 
$\alpha_{k,G}$-invariant, that was shown to stabilize always \cite[Theorem 1.4]{JR1}.
Thus $\PP^2$ blown-up at one or two points are examples for which $\delta_k(-K_X)$ does not stabilize, while for the three remaining smooth
 toric del Pezzos stabilization occurs.
This might give the impression that the existence of the \KE metric
dictates stabilization; this is true precisely
when $n\le 6$ (Example \ref{BSExam}).

\begin{remark}
\lb{GeneralRemark}
    The first part of Corollary \ref{asymChow} generalizes to non-toric smooth Fano $X$ using difficult results from algebraic and differential geometry, e.g., the Yau--Tian--Donaldson correspondence, and finite generation. 
    Indeed, when $\delta>1$ this follows from \cite[Corollary 1.11]{Zh21}. When $\delta<1$, here is a sketch: there exists a ``dreamy'' divisor $E$ over $X$ that computes $\delta$ \cite[Theorem 1.2]{LXZ22}, and corresponds to a test configuration $(\cX,\cL)$ with $S_k(E)=\sum_iDF_i(\cX,\cL)k^{-i}$ and $2\Fut(\cX,\cL)=A(E)-S(E)<0$. In particular, $DF_1(\cX,\cL)=-\Fut(\cX,\cL)>0$. Therefore, for sufficiently large $k$, $S_k(E)>S(E)$. Hence $\delta_k<\delta$.
    We emphasize that these results do not seem to give any information concerning the more refined Problem \ref{deltaProb} nor an alternative approach to our
 Corollary \ref{MainAsympThm}.

\end{remark}

\begin{remark}
    Let $(X,L)$ be a polarized toric manifold with associated Delzant lattice polytope $P$. 
    A result of Ono can be interpreted as (he does not implicitly consider the quantized barycenters)  
    $\Bc_k(P)=\Bc(P)$ (for a fixed $k$)
    is a necessary condition for $(X,kL)$ to be Chow semistable \cite[Theorem 1.2]{Ono11} (for Fano, and not necessarily toric manifolds see 
    \cite[Theorem 2.3]{RTZ21} and \cite[Corollary 7.1]{RTZ21}).    
    In particular, if $(X,L)$ is asymptotically Chow semistable, i.e., $(X,kL)$ is Chow semistable for sufficiently large $k$, then Ono's result can be interpreted as implying $\Bc_k(P)=\Bc(P)$ for all $k$ \cite[Theorem 1.4]{Ono11} (by using 
    the existence of the generalized Ehrhart polynomial for homogeneous polynomials (cf. Remark \ref{LoiRemark})). These results are of course special
    cases of Theorems \ref{FirstMainThm} and \ref{tc main thm}, together with the fact that $DF_1\left(\cX,\cL,\sigma_{q(v)}\right)=0$
    for all product test configurations whenever $(X,L)$ is K-semistable \cite[Definition 2.1.2]{Don02}, a condition that
    is implied by asymptotic Chow semistability \cite[\S5]{Thomas05}.
    Resolving a conjecture of Ono, Futaki \cite{Fut12} showed Ono's result can be interpreted in terms of integral invariants related to Hilbert series as in the work of Futaki \cite{FutChow} and Futaki--Ono--Sano \cite{FOS}. See also \cite{Yot16,Yot23,ST19} for more on relations between Chow stability, combinatorics and $\delta_k$-invariants.
    \end{remark}

\begin{problem}
    Does the stabilization of $\Bc_k(P)$ imply asymptotic Chow polystability
    of $X_P$, assuming it is smooth (i.e., $P$ is Delzant by Theorem
\ref{toric-corresp})?
\end{problem}

    The answer is likely affirmative: by a result of Lee--Yotsutani 
    the stabilization of $\Bc_k(P)$ (they phrase this in an equivalent manner) 
    combined with K-semistability for toric degenerations implies asymptotic Chow polystability \cite[Corollary 1.5]{LY24}.
    In general, the stabilization of $\Bc_k(P)$ does not imply asymptotic Chow stability. See \cite[Corollary C.3]{LY24} for a reflexive, non-Delzant example.

\subsection{Organization}

In Section \ref{Bc_k lattice}, we derive a formula for the first order expansion of the quantized barycenters for any lattice polytope (Theorem \ref{FirstMainThm}). In Section \ref{Bc_k smooth}, we express the asympototic expansion of the quantized barycenters for Delzant lattice polytopes in terms of mixed volumes using Hirzebruch--Riemann--Roch (Theorem \ref{smooth theorem}). Section \ref{Bc_k reflexive} simplifies the results in Section \ref{Bc_k lattice} under the additional assumption that the lattice polytope is reflexive (Theorem \ref{Bc_k expansion}). We record in Section \ref{delta_k-invariant for toric del Pezzo surfaces} an explicit computation of the quantized barycenters of reflexive polygons without the use of Ehrhart theory (Theorem \ref{barycenter ratio}). These results are then used to derive the asymptotic behavior of $\delta_k$-invariants in Section \ref{delta_k section} (Corollaries \ref{MainAsympThm} and \ref{asym toric fano delta_kMain}). In Section \ref{tc section} we remark on how the coefficients of the asymptotic expansion are related to higher Donaldson--Futaki invariants (Theorem \ref{tc main thm}). Finally, we provide some examples in Section \ref{example sec}.
In an Appendix it is shown that \eqref{RTZEq} extends to all polarizations \eqref{toric delta_k formula}, generalizing
a formula of Blum--Jonsson and 
Rubinstein--Tian--Zhang.

\bigskip
\noindent
{\bf Acknowledgments.} 
Thanks to 
F. Liu for pointing out the case $n=3$
of Proposition \ref{Ehrhart polynomial reflexive}
and to her and to 
 A. Futaki, K. M\'esz\'aros, Y. Odaka, N. Yotsutani for helpful references, and to
H. Blum, M. Jonsson, 
and K. Zhang for Remark \ref{GeneralRemark}.
Research supported in part 
by NSF grants DMS-1906370,2204347, BSF
grant 2020329, and an
Ann G. Wylie Dissertation Fellowship.

\section{First order expansion of quantized barycenters of lattice polytopes}\label{Bc_k lattice}

In this section we study the first order expansion of $\Bc_k(P)$ for any lattice polytope $P$.

\begin{proposition}
\label{Bc_k asymptotic formula}
    Let $P$ be a lattice polytope. Then
    $$
    \Bc_k(P)=\Bc(P)+\frac{\wvol(\partial P)}{2\Vol(P)}(\wBc(\partial P)-\Bc(P))k^{-1}+O(k^{-2}).
    $$
\end{proposition}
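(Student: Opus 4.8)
The plan is to convert the combinatorial average $\Bc_k(P)$ into a ratio of ordinary Ehrhart polynomials and then read off the first two terms of its $1/k$-expansion from Theorem~\ref{Ehrhart polynomial}. First I would fix $c\in M$ with $P+c\subset\RR_+^n$ and note that a lattice point $(u,h)\in M\times\ZZ$ lies in $kP(c;i)$ exactly when $u\in k(P+c)\cap M$ and $0\le h\le u_i$; since $u_i$ is then a nonnegative integer, counting the admissible $h$ gives $E_{P(c;i)}(k)=\sum_{u\in k(P+c)\cap M}(u_i+1)=\sum_{u\in k(P+c)\cap M}u_i+E_P(k)$. Dividing by $kE_P(k)$ and subtracting $c_i$ (using $\Bc_k(P+c)=\Bc_k(P)+c$) reproduces identity~\eqref{translation} of Theorem~\ref{FirstMainThm}, so the whole problem reduces to expanding that rational function of $k$.

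By Theorem~\ref{Ehrhart polynomial}, $E_P(k)=\Vol(P)k^n+\tfrac12\wvol(\partial P)k^{n-1}+O(k^{n-2})$ and, in dimension $n+1$, $E_{P(c;i)}(k)=\Vol(P(c;i))k^{n+1}+\tfrac12\wvol(\partial P(c;i))k^{n}+O(k^{n-1})$. Substituting into~\eqref{translation} and expanding a geometric series, the constant term of $\Bc_{k,i}(P)$ is $\Vol(P(c;i))/\Vol(P)-c_i$ and the $k^{-1}$-coefficient is
$$a_{1,i}(P)=\frac{\wvol(\partial P(c;i))}{2\Vol(P)}-1-\frac{\Vol(P(c;i))}{\Vol(P)}\cdot\frac{\wvol(\partial P)}{2\Vol(P)}.$$
Since $\Vol(P(c;i))=\int_{P+c}u_i\,du=\Vol(P)\bigl(\Bc_i(P)+c_i\bigr)$, this already gives $a_0(P)=\Bc(P)$, and everything comes down to identifying $\wvol(\partial P(c;i))$.

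This is the main step. The boundary $\partial P(c;i)$ splits into: the floor $\{h=0\}$, a translate of $P$; the roof, the graph $\{(u,u_i):u\in P+c\}$ lying in the hyperplane $\{h=x_i\}$; and, for each facet $G$ of $P+c$, a wall $\{(u,h):u\in G,\ 0\le h\le u_i\}$ (of measure zero exactly when $x_i\equiv0$ on $G$). Working through Definition~\ref{normalized vol}, I would check that the floor and the roof each have lattice-normalized measure $\Vol(P)$ — for the roof, the relevant sublattice of $M\times\ZZ$ has fundamental parallelotope of Euclidean $n$-volume $\sqrt2$ times the covolume of $M$, which cancels the Jacobian $\sqrt2$ of the graph map $u\mapsto(u,u_i)$ — and that for a wall over $G$ the $h$-direction is orthogonal to the affine hull of $G$, so Fubini gives Euclidean $n$-volume $\int_G u_i\,d\Vol_{n-1}$, while the sublattice of $M\times\ZZ$ in the affine hull of the wall is $M_{H}\times\ZZ$ ($H\supset G$ the facet hyperplane), whose fundamental parallelotope has the same $n$-volume as the $(n-1)$-volume of a fundamental parallelotope of $M_H$ inside $G$; hence $\wvol(\text{wall over }G)=\int_G x_i\,d\wvol$. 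Summing over facets, and using that translation by $c\in M$ preserves the lattice-normalized boundary measure,
$$\wvol(\partial P(c;i))=2\Vol(P)+\int_{\partial(P+c)}x_i\,d\wvol=2\Vol(P)+\wvol(\partial P)\bigl(\wBc_i(\partial P)+c_i\bigr).$$
Feeding this back into $a_{1,i}(P)$, the additive $1$ and all $c_i$-terms cancel, leaving $\tfrac{\wvol(\partial P)}{2\Vol(P)}(\wBc_i(\partial P)-\Bc_i(P))$, which is the asserted coefficient.

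The expected obstacle is exactly the third paragraph: carefully matching the lattice-normalizing fundamental-parallelotope volumes on the slanted roof and on each wall so that all Jacobian factors cancel, and verifying the facet structure of $P(c;i)$ — that there are no facets beyond floor, roof and walls, and that the degenerate walls contribute harmlessly to both sides. The Ehrhart bookkeeping in the first two paragraphs is routine.
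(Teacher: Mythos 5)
Your proposal is correct and follows essentially the same route as the paper: both reduce $\Bc_{k,i}$ to a ratio of Ehrhart polynomials of a rooftop polytope over $P$ and extract the two leading coefficients from the floor/roof/wall decomposition of its boundary (your $P(c;i)$ is just a translate of the paper's $P_i$, a point the paper itself notes). Your third paragraph simply spells out in more detail the lattice-normalization bookkeeping for the slanted roof and the walls that the paper compresses into its formula \eqref{nvolP_i}.
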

\begin{proof}
    \begin{figure}
        \centering
        \begin{tikzpicture}
            \filldraw[fill=gray!50](0,0)--(4,0)--(5,{sqrt(3)})--(5,{4+sqrt(3)})--(4,4)--cycle;
            \filldraw[fill=gray!25](0,0)--(4,4)--(5,{4+sqrt(3)})--cycle;
            \draw(4,0)--(4,4);
            \draw(4.75,{3*sqrt(3)/4}); 
            \draw(5,{2/3+sqrt(3)});
            \filldraw(0,0)circle(2pt)node[left]{$u_i=-C_i$};
            \filldraw({4/3},0)circle(2pt);
            \filldraw({8/3},0)circle(2pt);
            \filldraw(4,0)circle(2pt)node[right]{$u_i=3-C_i$};
            \filldraw(4.5,{sqrt(3)/2})circle(2pt);
            \filldraw(5,{sqrt(3)})circle(2pt);
            \filldraw({4/3},{4/3})circle(2pt);
            \filldraw({8/3},{4/3})circle(2pt);
            \filldraw(4,{4/3})circle(2pt);
            \filldraw(4.5,{4/3+sqrt(3)/2})circle(2pt);
            \filldraw(5,{4/3+sqrt(3)})circle(2pt);
            \filldraw({8/3},{8/3})circle(2pt);
            \filldraw(4,{8/3})circle(2pt);
            \filldraw(4.5,{8/3+sqrt(3)/2})circle(2pt);
            \filldraw(5,{8/3+sqrt(3)})circle(2pt)node[right]{$\;\;\;\;P_i$};
            \filldraw(4,4)circle(2pt);
            \filldraw(4.5,{4+sqrt(3)/2})circle(2pt);
            \filldraw(5,{4+sqrt(3)})circle(2pt)node[right]{$h=3$};
            \filldraw({8/3+.5},{8/3+sqrt(3)/2})circle(2pt);
            \draw[->](3,-.5)--(3,-1.5)node[midway,right]{$\pi$};
            \filldraw[fill=gray!25](0,-3)--(4,-3)--(5,{sqrt(3)-3})--cycle;
            \filldraw(0,-3)circle(2pt)node[left]{$u_i=-C_i$};
            \filldraw({4/3},-3)circle(2pt);
            \filldraw({8/3},-3)circle(2pt);
            \filldraw(4,-3)circle(2pt)node[right]{$u_i=3-C_i$};
            \filldraw(4.5,{sqrt(3)/2-3})circle(2pt);
            \filldraw(5,{sqrt(3)-3})circle(2pt);
            \filldraw({8/3+.5},{-3+sqrt(3)/2})circle(2pt);
            \draw[dotted]({4/3},0)--({4/3},{4/3});
            \draw[dotted]({8/3},0)--({8/3},{8/3});
            \draw[dotted](4.5,{sqrt(3)/2})--(4.5,{sqrt(3)/2+4});
        \end{tikzpicture}
        \caption{The lattice polytope $P_i$: $u_i+C_i$ lattice points above each lattice
        point in $P$ with $i$-th coordinate equal to $u_i$. The dotted lines are fibers of $\pi:M\times\RR\ni (u,h)\mapsto u\in M$.}\label{lattice polytope}
    \end{figure}
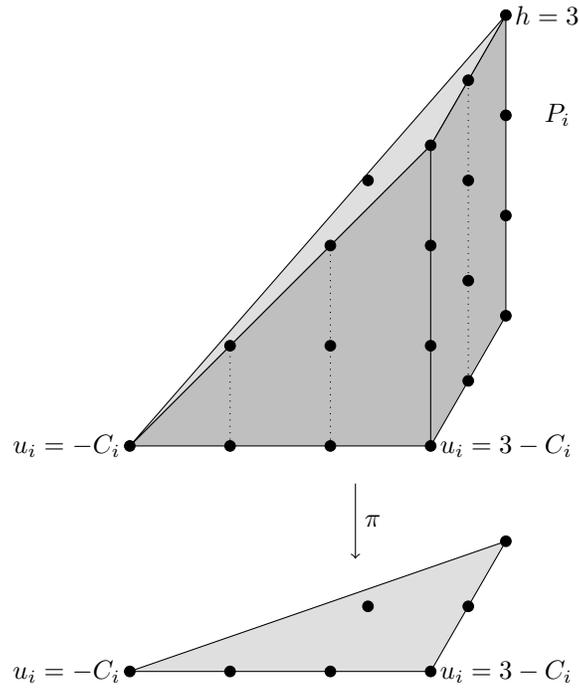
    Fix $1\leq i\leq n$. Pick an integer $C_i\geq-\min\{u_i:(u_1,\ldots,u_n)\in P\}$. Then for any $u\in P$, the function $h_i(u):=u_i+C_i$ is non-negative. Consider the lattice polytope
    (see Figure \ref{lattice polytope})
    \begin{equation}\label{no translation}
        P_i:=\{(u,h)\in\RR^n\times\RR~:~u\in P,~0\leq h\leq h_i(u)\}.
    \end{equation}
    Notice that
    \begin{align*}
        E_{P_i}(k)&=\sum_{u\in P\cap\frac{1}{k}L}\#\left\{\left(u,0\right),\left(u,\frac{1}{k}\right),\cdots,\left(u,h_i(u)\right)\right\}\\
        &=\sum_{u\in P\cap\frac{1}{k}L}(k(u_i+C_i)+1)\\
        &=(kC_i+1)E_P(k)+k\sum_{u\in P\cap\frac{1}{k}L}u_i.
    \end{align*}
    Therefore,
    \begin{equation}
    \label{qbc coordinate}
        \frac{1}{E_P(k)}\sum_{u\in P\cap\frac{1}{k}L}u_i=\frac{E_{P_i}(k)}{kE_P(k)}-C_i-\frac{1}{k}.
    \end{equation}
    Now,
    \begin{equation}\label{volP_i}
        \Vol\left(P_i\right)=\int_Ph_i(u)d\Vol=\int_Pu_id\Vol+C_i\Vol(P).
    \end{equation}
    Notice that the projection to $P\subseteq\RR^n\times\{0\}$ from the upper boundary of $P_i$, denoted by
    \begin{equation*}
        H_i:=\{(u,h)\in P_i~:~h=h_i(u)\},
    \end{equation*}
    preserves normalized volumes, since this projection restricted to the lattice points on the hyperplanes is an isomorphism of lattices. In particular, $\Vol(P)=\wvol(H_i)$.
    It follows that
    \begin{equation}\label{nvolP_i}
        \wvol\left(\partial P_i\right)=\int_{\partial P}h_i(u)d\wvol+2\Vol(P)=\int_{\partial P}u_id\wvol+C_i\wvol(\partial P)+2\Vol(P).
    \end{equation}
    Combining \eqref{qbc coordinate}, \eqref{volP_i}, \eqref{nvolP_i}, and Theorem \ref{Ehrhart polynomial},
    \begin{align*}
        \frac{1}{E_P(k)}\sum_{u\in P\cap\frac{1}{k}L}u_i&=\frac{\Vol\left(P_i\right)k^{n+1}+\frac{1}{2}\wvol\left(\partial P_i\right)k^n+O(k^{n-1})}{\Vol(P)k^{n+1}+\frac{1}{2}\wvol(\partial P)k^n+O(k^{n-1})}-C_i-\frac{1}{k}\\
        &=\frac{\Vol\left(P_i\right)}{\Vol(P)}-C_i+\left(\frac{\Vol(P)\wvol\left(\partial P_i\right)-\wvol(\partial P)\Vol\left(P_i\right)}{2\Vol(P)^2}-1\right)k^{-1}+O(k^{-2})\\
        &=\frac{\int_Pu_id\Vol}{\Vol(P)}+\frac{\wvol(\partial P)}{2\Vol(P)}\left(\frac{\int_{\partial P}u_id\wvol}{\wvol(\partial P)}-\frac{\int_Pu_id\Vol}{\Vol(P)}\right)k^{-1}+O(k^{-2}),
    \end{align*}
concluding the proof of Proposition \ref{Bc_k asymptotic formula}.
    \end{proof}

\begin{remark}
\lb{LoiRemark}
An alternative proof of Proposition \ref{Bc_k asymptotic formula} can be obtained from the first-order
expansion of the {\it sum} of the function 
$\phi(u)=u$ over the integral lattice points
(as opposed to $\Bc_k$ that is its {\it average}), see,
e.g., \cite[(5)]{Yot23}, \cite[(7)]{LoiZuddas}.
Such a formula follows directly from the
existence of the generalized Ehrhart polynomial  
for homogeneous polynomials and 
from Ehrhart--MacDonald reciprocity for such polynomials
that generalizes Proposition \ref{reciprocity}
\cite[Proposition 4.1]{BrionVergne}. 
While such a proof would be perhaps more standard for experts in Ehrhart theory, the proof we give is certainly more elementary and allows us to make the connection to the {\it rooftop polytope}
(that, in the toric geometry literature, goes back to Donaldson \cite[\S4.2]{Don02}) that then leads to the connection to Donaldson--Futaki invariants in Theorem 
\ref{tc main thm}.
\end{remark}

\begin{corollary}
\label{Bc_k=0 for all k if large k}
    Each coordinate of the quantized barycenter $\Bc_k(P)$ is a rational function of $k$. More precisely, it is the quotient of two polynomials of degree at most $n$. In particular, if $\Bc_k(P)$ is constant for $n+1$ values of $k$, then $\Bc_k(P)=\Bc(P)$ for all $k$.  
    Consequently, if $\Bc_k(P)$ is constant for sufficiently large $k$, then $\Bc_k(P)=\Bc(P)$ for all k.
\end{corollary}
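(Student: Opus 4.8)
The plan is to read everything off the identity \eqref{qbc coordinate} established inside the proof of Proposition \ref{Bc_k asymptotic formula}. Fix a coordinate index $i$. Substituting $v = ku$ in the definition of $\Bc_k(P)$ shows that $\Bc_{k,i}(P)$ is precisely the left-hand side of \eqref{qbc coordinate}, so, rewriting the right-hand side of \eqref{qbc coordinate} over a common denominator,
\[
\Bc_{k,i}(P) \;=\; \frac{E_{P_i}(k) - (kC_i+1)\,E_P(k)}{k\,E_P(k)},
\]
where $P_i$ is the $(n+1)$-dimensional lattice polytope of \eqref{no translation}. By Theorem \ref{Ehrhart polynomial} both $E_{P_i}$ and $E_P$ are polynomials, of degrees $n+1$ and $n$ respectively, and $E_P(k) = \#(kP\cap M) \ge 1$ for all $k \in \NN$, so the right-hand side is a fixed rational function of $k$ that computes $\Bc_{k,i}(P)$ at every positive integer. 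This already yields ``rational function''; the content is in controlling the degrees.

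To sharpen the bound to $n$ I would record two facts about the numerator $N(k) := E_{P_i}(k) - (kC_i+1)E_P(k)$. First, the constant-term clause $a_0 = 1$ of Theorem \ref{Ehrhart polynomial} gives $E_{P_i}(0) = E_P(0) = 1$, so $N(0) = 0$ and $k \mid N(k)$. Second, the leading-coefficient clause together with \eqref{volP_i} gives that the $k^{n+1}$-coefficient of $N$ is $\Vol(P_i) - C_i\Vol(P) = \int_P u_i\,dx = \Vol(P)\,\Bc_i(P)$, where $\Bc_i(P)$ is the $i$-th coordinate of $\Bc(P)$. Since $k\,E_P(k)$ has the same $k^{n+1}$-coefficient $\Vol(P)$, the polynomial $N(k) - \Bc_i(P)\,k\,E_P(k)$ has degree $\le n$ and still vanishes at $k=0$; write it as $k\,Q(k)$ with $\deg Q \le n-1$. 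Dividing through by $k\,E_P(k)$,
\[
\Bc_{k,i}(P) \;=\; \Bc_i(P) + \frac{Q(k)}{E_P(k)} \;=\; \frac{\Bc_i(P)\,E_P(k) + Q(k)}{E_P(k)},
\]
a quotient of polynomials of degree at most $n$, which is the first assertion of the corollary.

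For the rigidity part, suppose $\Bc_k(P)$ takes a single common value on $n+1$ distinct integers $k_0,\dots,k_n\in\NN$; reading off coordinate $i$ with common value $c_i$, the polynomial $(\Bc_i(P) - c_i)E_P(k) + Q(k)$ has degree $\le n$ and vanishes at these $n+1$ points, hence is identically zero. Thus $\Bc_{k,i}(P) = c_i$ for all $k\in\NN$, and letting $k\to\infty$ with $\Bc(P) = \lim_k \Bc_k(P)$ forces $c_i = \Bc_i(P)$; ranging over $i$ gives $\Bc_k(P) = \Bc(P)$ for all $k$. The ``consequently'' clause is then immediate, since being eventually constant means being constant on (at least) $n+1$ values of $k$.

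The only step that is not pure bookkeeping is the degree reduction in the second paragraph: without the two coefficient identities $N(0)=0$ and $[k^{n+1}]N = \Vol(P)\,\Bc_i(P)$ one would only see $\Bc_{k,i}(P)$ as a ratio of degree-$(n+1)$ polynomials, which would weaken the rigidity conclusion to ``$n+2$ values'' instead of the sharp ``$n+1$ values''. Both identities, however, are immediate consequences of Theorem \ref{Ehrhart polynomial} and \eqref{volP_i}, which are already available, so I anticipate no genuine obstacle.
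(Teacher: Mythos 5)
Your proof is correct and follows essentially the same route as the paper: express $\Bc_{k,i}$ via \eqref{qbc coordinate} as $\bigl(E_{P_i}(k)-(C_ik+1)E_P(k)\bigr)/(kE_P(k))$, use Theorem \ref{Ehrhart polynomial} to see the numerator is a degree-$\le n{+}1$ polynomial vanishing at $k=0$, cancel the factor $k$, and conclude by noting that a degree-$\le n$ polynomial with $n+1$ zeros vanishes identically. The only quibble is with your closing remark: the leading-coefficient identity $[k^{n+1}]N=\Vol(P)\Bc_i(P)$ is not actually needed for the sharp degree bound, since factoring $k$ out of the numerator and cancelling it against the $k$ in the denominator $kE_P(k)$ already yields a ratio of degree-$\le n$ polynomials, which is exactly what the paper does.
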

\begin{proof}
    Let $\Bc_{k,i}$ denote the $i$-th coordinate of the quantized barycenter $\Bc_k(P)\in\RR^n$.
    Using the same notation as in the proof of Proposition \ref{Bc_k asymptotic formula}, by (\ref{qbc coordinate}), we have
    \begin{align}
        \Bc_{k,i}
        =\frac{1}{E_P(k)}\sum_{u\in P\cap\frac{1}{k}L}u_i
        =\frac{E_{P_i}(k)-(C_ik+1)E_P(k)}{kE_P(k)}.\label{Bc_k no translation}
    \end{align}
    Notice that the numerator
    \begin{align*}
        Q_i(k):=E_{P_i}(k)-(C_ik+1)E_P(k)=E_{P_i}(k)-E_{P\times[0,C_i]}(k),
    \end{align*}
    is a polynomial with respect to $k$ of degree at most $n+1$ and with constant term $0$ by Theorem \ref{Ehrhart polynomial}. Hence $Q_i(k)=k\widetilde{Q}_i(k)$ for some polynomial $\widetilde{Q}_i$ of degree at most $n$.
    Now,
    $$
        \Bc_{k,i}=\frac{\widetilde{Q}_i\left(k\right)}{E_P\left(k\right)}.
    $$
    Since there are $n+1$ values of $k$ such that $\Bc_{k,i}$ is equal to a constant, say, $\lambda_i$, the polynomial
    $$
        \widetilde{Q}_i\left(k\right)-\lambda_iE_P\left(k\right)
    $$
    has degree at most $n$ and has at least $n+1$ zeros. It follows that this polynomial vanishes everywhere, i.e., $\Bc_{k,i}$ is constant for all $k$.
\end{proof}

\begin{proof}[Proof of Theorem \ref{FirstMainThm}]
    The second half of the theorem is proved by Proposition \ref{Bc_k asymptotic formula} and Corollary \ref{Bc_k=0 for all k if large k}. It remains to show \eqref{translation}. Notice that for any $i$,
    $$
        P\left(c;i\right)-c=\left\{\left(u,h\right)\,:\,u\in P+c,\; h\in\left[0,u_i\right]\right\}-c=\left\{\left(u,h\right)\,:\,u\in P,\; h\in\left[0,u_i+c_i\right]\right\}=P_i
    $$
    (Recall \eqref{no translation}). In particular, their Ehrhart polynomials coincide. This reduces \eqref{translation} to \eqref{Bc_k no translation}.
\end{proof}

\section{Asymptotics of quantized barycenters of Delzant lattice polytopes}\label{Bc_k smooth}

In this section, we express the asympototic expansion of the quantized barycenters in terms of mixed volumes of virtual polytopes using toric geometry. Here we restrict ourselves to Delzant lattice polytopes, namely the lattice polytopes that determines a smooth fan, i.e., the primitive generators of any $n$-dimensional cone form a $\ZZ$-basis. Recall Definition \ref{virtual def}.

\begin{lemma}\label{mixed volumes}
    For divisors $D_1,\ldots,D_n$ with associated virtual polytopes $P_1,\ldots,P_n$,
    $$
        V\left(P_1,\ldots,P_n\right)=\frac{1}{n!}D_1\cdots D_n.
    $$
\end{lemma}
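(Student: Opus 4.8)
The statement to establish is the classical dictionary between mixed volumes of lattice polytopes (here, virtual polytopes) and intersection numbers of the associated toric divisors on a smooth complete toric variety. The plan is to reduce everything to the case of genuine (nondegenerate) lattice polytopes and then invoke Riemann--Roch on the toric manifold $X$. First I would recall that both sides of the claimed identity are multilinear and symmetric: the mixed volume $V(\cdot,\ldots,\cdot)$ extends multilinearly from $\cP$ to its Grothendieck group $\cP^\ast$ by Definition \ref{virtual def}, and the intersection product $D_1\cdots D_n$ is multilinear on the divisor group, with the correspondence $D\mapsto P_D$ being a group homomorphism from Cartier divisors to virtual polytopes (this is part of the content of Theorem \ref{toric-corresp} and Definition \ref{virtual def}). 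Hence it suffices to prove the identity when all $D_i$ are ample (equivalently, when all $P_i=P_{D_i}$ are honest $n$-dimensional lattice polytopes), since any divisor is a difference of two ample ones \cite[Example 1.2.10]{Laz04}.

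In the ample case the argument is: by Theorem \ref{toric-corresp}, on the smooth toric variety $X$ the ample divisor $D_i$ corresponds to the polytope $P_i$, and the Ehrhart-type count $\#(kP_i\cap M)=h^0(X,\cO_X(kD_i))=\chi(X,\cO_X(kD_i))$ for $k$ large (vanishing of higher cohomology for ample line bundles on a smooth toric, plus the fact that lattice points of $kP_i$ compute sections). By Hirzebruch--Riemann--Roch on $X$, $\chi(X,\cO_X(kD_i))$ is a polynomial in $k$ whose leading term is $\frac{k^n}{n!}D_i^n$; comparing with the Ehrhart polynomial of $P_i$ (Theorem \ref{Ehrhart polynomial}) gives $\Vol(P_i)=\frac{1}{n!}D_i^n$. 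To get the full mixed statement, apply this to the ample divisor $t_1D_1+\cdots+t_nD_n$ for positive integers $t_1,\ldots,t_n$: on one side, $\Vol(t_1P_1+\cdots+t_nP_n)=\sum_{m_1+\cdots+m_n=n}\binom{n}{m_1,\ldots,m_n}t_1^{m_1}\cdots t_n^{m_n}V(P_1,m_1;\ldots)$ by the defining polynomiality of mixed volumes, and on the other, $\frac{1}{n!}(t_1D_1+\cdots+t_nD_n)^n$ expands by the multinomial theorem into $\frac{1}{n!}\sum\binom{n}{m_1,\ldots,m_n}t_1^{m_1}\cdots t_n^{m_n}(D_1^{m_1}\cdots D_n^{m_n})$; matching coefficients of the monomial $t_1\cdots t_n$ (each $m_i=1$) yields $V(P_1,\ldots,P_n)=\frac{1}{n!}D_1\cdots D_n$.

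\textbf{Main obstacle.} The genuinely nontrivial inputs are the two facts used in the ample case: (i) that lattice points of the dilated polytope compute global sections, $\#(kP_i\cap M)=h^0(X,\cO_X(kD_i))$, and (ii) that $\chi$ equals $h^0$ for ample line bundles on a smooth complete toric variety. Both are standard in toric geometry (see \cite[\S3.4, \S3.5]{Ful93}, or \cite{CLS11}), so I would cite them rather than reprove them; the only care needed is to make sure the Ehrhart polynomial (which is a polynomial identity, valid for all $k\in\NN$) and the $\chi$-polynomial agree as polynomials — this follows since they agree for all sufficiently large $k$ and both are polynomials. A secondary point to handle cleanly is the passage to virtual polytopes: one must check that the extension-by-multilinearity of $V$ is well-defined on $\cP^\ast$ (i.e., independent of the representation $P_1-P_2$), which is exactly the statement that $V$, as a symmetric multilinear-in-Minkowski-sums function, descends to the Grothendieck group; this is immediate from the polynomiality of $\Vol(\sum t_iP_i)$ together with the cancellation law in $\cP$ used to define $\cP^\ast$. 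With those pieces in place the proof is a short coefficient-comparison, so I would keep the write-up brief and cite \cite{Ful93} for the two RR-theoretic facts.
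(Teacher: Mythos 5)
Your proposal is correct and follows essentially the same route as the paper: reduce to the ample case by multilinearity (writing any divisor as a difference of ample ones), apply the identity $\Vol(P_D)=\frac{1}{n!}D^n$ to the ample combination $\sum t_iD_i$, and compare the coefficients of $t_1\cdots t_n$. The only difference is that you spell out the Riemann--Roch/Ehrhart justification of the volume--self-intersection identity, which the paper simply asserts.
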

\begin{proof}
    Since any divisor can be written as the difference of two ample divisors, by multi-linearity we may assume $D_1,\ldots,D_n$ are ample. In that case the corresponding virtual polytopes $P_1,\ldots,P_n$ are polytopes. For $k_1,\ldots,k_n>0$, the divisor $k_1D_1+\cdots+k_nD_n$ is ample with associated polytope $k_1P_1+\cdots+k_nP_n$. Therefore
    $$
        \Vol\left(k_1P_1+\cdots+k_nP_n\right)=\frac{1}{n!}\left(k_1D_1+\cdots+k_nD_n\right)^n.
    $$
    Both sides are polynomials in $k_1,\ldots,k_n$. In particular, the coefficients of $k_1\cdots k_n$ in both sides of the equation should be equal, i.e.,
    $
        n!V\left(P_1,\ldots,P_n\right)=D_1\cdots D_n.
    $
\end{proof}

\begin{definition}
    Define the Todd class
    $$
        \Td\left(X\right)=\prod_{i=1}^n\frac{\xi_i}{1-e^{-\xi_i}},
    $$
    where the $\xi_i$'s are the Chern roots of the tangent bundle $T_X$, i.e., they are symbolic objects such that
    $$
        c\left(T_X\right)=\prod_{i=1}^n\left(1+\xi_i\right),
    $$
    or more explicitly,
    $$
        c_i\left(T_X\right)=\sigma_i\left(\xi_1,\ldots,\xi_n\right),
    $$
    where $\sigma_i$ is the $i$-th elementary symmetric polynomial.
\end{definition}

$\Td(X)$ can be expressed in terms of the cohomology classes of the torus invariant divisors.

\begin{definition}
    Define
    $$
        \Td\left(x\right):=\frac{x}{1-e^{-x}}=\sum_{j=0}^\infty\frac{B_j}{j!}x^j=1+\frac{1}{2}x+\frac{1}{12}x^2-\frac{1}{720}x^4+\cdots,
    $$
    where $B_j$ is called the $j$-th Bernoulli number.
\end{definition}
\begin{lemma}
{\rm\cite[Theorem 13.1.6]{CLS11}}
    Let $D_i$ denote the divisor corresponding to $v_i$. Then
    $$
        \Td\left(X\right)=\prod_{i=1}^d\Td\left(\left[D_i\right]\right).
    $$
\end{lemma}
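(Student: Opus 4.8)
The plan is to derive the identity from the generalized Euler sequence for smooth complete toric varieties together with the multiplicativity of the Todd class. Since the polytope is full-dimensional, the associated toric manifold $X$ is projective, hence complete and smooth, so one has the toric Euler sequence of vector bundles (see \cite[Theorem 8.1.6]{CLS11})
\[
0\to\Omega^1_X\to\bigoplus_{i=1}^d\cO_X(-D_i)\to\mathrm{Pic}(X)\otimes_\ZZ\cO_X\to0,
\]
where $\mathrm{Pic}(X)$ is free of rank $d-n$. Dualizing (all terms are locally free) gives
\[
0\to\cO_X^{\oplus(d-n)}\to\bigoplus_{i=1}^d\cO_X(D_i)\to T_X\to0.
\]

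Next I would invoke the Whitney sum formula: for a short exact sequence $0\to A\to B\to C\to0$ of vector bundles, $c(B)=c(A)c(C)$, so the Chern roots of $B$ are the union, with multiplicity, of those of $A$ and of $C$; since $\Td$ is the multiplicative characteristic class built from $x\mapsto x/(1-e^{-x})$, this yields $\Td(B)=\Td(A)\,\Td(C)$. Applying this to the sequence above, using $\Td(\cO_X)=1$ and that $\Td$ of a direct sum is the product of the $\Td$'s of the summands, gives
\[
\prod_{i=1}^d\Td\big(\cO_X(D_i)\big)=\Td\Big(\bigoplus_{i=1}^d\cO_X(D_i)\Big)=\Td\big(\cO_X^{\oplus(d-n)}\big)\,\Td(T_X)=\Td(X).
\]
Finally, each $\cO_X(D_i)$ is a line bundle with unique Chern root $[D_i]$, so by the definition of the power series $\Td(x)$ one has $\Td(\cO_X(D_i))=[D_i]/(1-e^{-[D_i]})=\Td([D_i])$, which completes the argument.

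The only non-formal input is the toric Euler sequence itself, whose construction (producing the surjection $\bigoplus_i\cO_X(D_i)\twoheadrightarrow T_X$ from the torus action and identifying its kernel) requires a little toric geometry; but this is entirely standard, so there is no real obstacle — the lemma is essentially bookkeeping on top of a known exact sequence. An alternative would compute $c(T_X)$ directly from the local charts over the maximal cones of the fan, but the exact-sequence route is cleaner and is the one I would present.
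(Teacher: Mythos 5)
Your proof is correct. Note that the paper does not prove this lemma at all: it is quoted verbatim from \cite[Theorem 13.1.6]{CLS11}, so there is no internal argument to compare against. What you supply is the standard derivation, and it is essentially the one behind the cited theorem: dualize the generalized Euler sequence $0\to\Omega^1_X\to\bigoplus_i\cO_X(-D_i)\to\mathrm{Pic}(X)\otimes_\ZZ\cO_X\to0$ to get $0\to\cO_X^{\oplus(d-n)}\to\bigoplus_i\cO_X(D_i)\to T_X\to0$, then use that $\Td$ is a multiplicative characteristic class with $\Td(\cO_X)=1$. Two small points of hygiene: smoothness of $X$ comes from $P$ being Delzant (the standing hypothesis of that section), not from projectivity or completeness as your phrasing suggests; and the Euler sequence requires the rays to span $N_\RR$ (no torus factors) and $\mathrm{Pic}(X)$ to be free of rank $d-n$, both of which hold here because the fan of a full-dimensional polytope is complete and $X$ is smooth. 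With those hypotheses made explicit, the argument is complete.
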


\begin{proposition}
    Suppose $L$ is ample. Then
    $$
        h^0\left(X,kL\right)=\sum_{j=0}^na_jk^j,
    $$
    where
    $$
        a_j=\frac{L^j}{j!}\cdot\sum_{\sum\limits_{i=1}^d\ell_i=n-j}\prod_{i=1}^d\frac{B_{\ell_i}}{\ell_i!}D_i^{\ell_i}.
    $$
    In particular,
    \begin{align*}
        a_n&=\frac{L^n}{n!},&a_{n-1}&=\frac{L^{n-1}\cdot\left(-K_X\right)}{2\left(n-1\right)!},&a_{n-2}&=\frac{L^{n-2}\cdot\left(\left(-K_X\right)^2+\sum\limits_{i<j}D_i\cdot D_j\right)}{12\left(n-2\right)!}.
    \end{align*}
\end{proposition}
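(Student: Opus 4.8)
The plan is to compute the Euler characteristic $\chi(X,kL)$ by Hirzebruch--Riemann--Roch and then identify it with $h^0(X,kL)$ via a vanishing theorem. Since $X$ is smooth and projective and $kL$ is ample for every positive integer $k$, Kodaira vanishing (or Demazure vanishing in the toric setting) gives $H^i(X,kL)=0$ for all $i\ge1$, so $h^0(X,kL)=\chi(X,kL)$.

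Second, I would apply HRR: $\chi(X,kL)=\int_X\ch(kL)\,\Td(X)=\int_X e^{k[L]}\,\Td(X)$, where $[L]=\sum_{i=1}^d b_i[D_i]$. Using the preceding lemma $\Td(X)=\prod_{i=1}^d\Td([D_i])$, I expand each factor as the power series $\Td([D_i])=\sum_{\ell_i\ge0}\tfrac{B_{\ell_i}}{\ell_i!}[D_i]^{\ell_i}$ and also $e^{k[L]}=\sum_{j\ge0}\tfrac{k^j}{j!}[L]^j$; since $A^m(X)=0$ for $m>n$ these are genuine polynomial expressions. Collecting the degree-$n$ component and integrating over $X$, i.e.\ evaluating the corresponding intersection numbers, gives
$$
\chi(X,kL)=\sum_{j=0}^n k^j\,\frac{L^j}{j!}\sum_{\sum_{i=1}^d\ell_i=n-j}\prod_{i=1}^d\frac{B_{\ell_i}}{\ell_i!}D_i^{\ell_i},
$$
which is exactly the asserted polynomial in $k$ with coefficients $a_j$.

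Third, I would read off the leading coefficients. For $j=n$ only the monomial with all $\ell_i=0$ survives and $B_0=1$, so $a_n=L^n/n!$. For $j=n-1$ exactly one $\ell_i$ equals $1$; with $B_1=\tfrac12$ and the standard toric identity $\sum_{i=1}^dD_i=-K_X$ this yields $a_{n-1}=\tfrac{1}{2(n-1)!}L^{n-1}\cdot(-K_X)$. For $j=n-2$ there are two types of monomials with $\sum\ell_i=2$: a single index with $\ell_i=2$ (coefficient $B_2/2!=\tfrac1{12}$) and two distinct indices with $\ell_i=\ell_j=1$ (coefficient $(B_1)^2=\tfrac14$), so the inner sum is $\tfrac1{12}\sum_iD_i^2+\tfrac14\sum_{i<j}D_iD_j$; substituting $\sum_iD_i^2=(-K_X)^2-2\sum_{i<j}D_iD_j$ rewrites this as $\tfrac1{12}\big((-K_X)^2+\sum_{i<j}D_iD_j\big)$, giving the stated $a_{n-2}$.

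I do not expect a genuine obstacle: the only substantive inputs are the vanishing theorem, the product formula $\Td(X)=\prod\Td([D_i])$ just proved, and the relation $-K_X=\sum D_i$, after which everything is bookkeeping in $A^*(X)$. The single point demanding care is the Bernoulli convention --- here $\Td(x)=x/(1-e^{-x})$, so $B_1=+\tfrac12$ rather than $-\tfrac12$ --- together with keeping the elementary algebra straight in the $a_{n-2}$ reduction.
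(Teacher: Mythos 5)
Your proposal is correct and follows essentially the same route as the paper: Kodaira vanishing to identify $h^0(X,kL)$ with $\chi(X,kL)$, then Hirzebruch--Riemann--Roch with the toric product formula $\Td(X)=\prod_{i=1}^d\Td([D_i])$ and a term-by-term expansion. Your explicit verification of $a_{n-2}$ (including the substitution $\sum_iD_i^2=(-K_X)^2-2\sum_{i<j}D_iD_j$ and the resulting coefficient $-\tfrac16+\tfrac14=\tfrac1{12}$) checks out and fills in bookkeeping the paper leaves implicit.
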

\begin{proof}
    By the Kodaira vanishing theorem, $h^0(X,kL)=\chi(X,kL)$ for sufficiently large $k$. Since both are polynomials, they are equal for all $k$. Therefore
    \begin{align*}
        h^0\left(X,kL\right)&=\chi\left(X,kL\right)\\
        &=\int_X\ch\left(kL\right)\cdot\Td\left(X\right)\\
        &=\int_Xe^{k\left[L\right]}\cdot\prod_{i=1}^d\Td\left(\left[D_i\right]\right)\\
        &=\int_X\sum_{j=0}^n\frac{k^j}{j!}\left[L\right]^j\cdot\prod_{i=1}^d\sum_{\ell=0}^\infty\frac{B_\ell}{\ell!}\left[D_i\right]^\ell\\
        &=k^j\sum_{j=0}^n\frac{L^j}{j!}\cdot\sum_{\sum\limits_{i=1}^d\ell_i=n-j}\prod_{i=1}^d\frac{B_{\ell_i}}{\ell_i!}D_i^{\ell_i}.
    \end{align*}
\end{proof}

\begin{remark}\label{scalar curvature}
    Recall the average scalar curvature
    $$
        \overline{S}:=\frac{\int_XS_\omega\cdot\omega^n}{\int_X\omega^n}=\frac{\int_Xn\Ric\omega\wedge\omega^{n-1}}{\int_X\omega^n}=\frac{n\left(-K_X\right)\cdot L^{n-1}}{L^n}=\frac{2a_{n-1}}{a_n}.
    $$
    By Theorem \ref{Ehrhart polynomial},
    $$
        \overline{S}=\frac{\wvol\left(\partial P\right)}{\Vol\left(P\right)},
    $$
    as observed by Donaldson \cite[p. 309]{Don02}.
\end{remark}

Next, using the construction of Proposition \ref{Bc_k asymptotic formula}, we relate $\Bc_k(P)$ to this expansion.

\begin{corollary}\label{combRR cor}
    Let $P\subseteq M\otimes\RR$ be a Delzant lattice polytope. Fix $v\in N$. Let $(X,L)$ denote the corresponding polarized toric manifold as in Theorem \ref{toric-corresp}, where
    $$
        L=\sum_{i=1}^db_iD_i.
    $$
    Also recall \eqref{cD def}. Then
    $$
        \left\langle\Bc_k\left(P\right),v\right\rangle=\frac{\sum\limits_{j=0}^nc'_{j+1}k^j}{\sum\limits_{j=0}^na_jk^j}=\sum_{j=0}^\infty k^{-j}\sum_{i=0}^jc'_{n+1-j+i}\left(\frac{\delta_{i0}}{a_n}+\sum_{\ell=1}^i\frac{\left(-1\right)^\ell}{a_n^{\ell+1}}\sum_{\sum\limits_{m=1}^\ell i_m=i-\ell}\prod_{m=1}^\ell a_{n-i_m-1}\right),
    $$
    where
    \begin{align*}
        a_j&=\frac{L^j}{j!}\cdot\sum_{\sum\limits_{i=1}^d\ell_i=n-j}\prod_{i=1}^d\frac{B_{\ell_i}}{\ell_i!}D_i^{\ell_i},&c'_j&=\frac{\overline{\cL}'^j}{j!}\cdot\sum_{\sum\limits_{i=1}^d\ell_i=n+1-j}\prod_{i=1}^d\frac{B_{\ell_i}}{\ell_i!}\cD_i^{\ell_i},
    \end{align*}
    and
    $$
        \overline{\cL}'=\sum_{i=1}^db_i\cD_i.
    $$
    In particular,
    \begin{align*}
        a_n&=\frac{L^n}{n!},\\
        a_{n-1}&=\frac{L^{n-1}\cdot\left(-K_X\right)}{2\left(n-1\right)!},\\
        a_{n-2}&=\frac{L^{n-2}\cdot\left(\left(-K_X\right)^2+\sum\limits_{i<j}D_i\cdot D_j\right)}{12\left(n-2\right)!},\\
        c'_{n+1}&=\frac{{\overline{\cL}'}^{n+1}}{\left(n+1\right)!},\\
        c'_n&=\frac{{\overline{\cL}'}^n\cdot\left(-K_{\overline{\cX}/\PP^1}\right)}{2n!},\\
        c'_{n-1}&=\frac{{\overline{\cL}'}^{n-1}\cdot\left(\left(-K_{\overline{\cX}/\PP^1}\right)^2+\sum\limits_{1\leq i<j\leq d}\cD_i\cdot\cD_j\right)}{12\left(n-1\right)!},
    \end{align*}
    where
    $$
        -K_{\overline{\cX}/\PP^1}:=-K_{\overline{\cX}}+\pi^*K_{\PP^1}=-K_{\overline{\cX}}-\cD_{d+1}-\cD_{d+2}=\sum_{i=1}^d\cD_i,
    $$
    and
    \begin{multline*}
        \left\langle\Bc_k\left(P\right),v\right\rangle=\frac{c'_{n+1}}{a_n}+\left(\frac{c'_n}{a_n}-\frac{c'_{n+1}a_{n-1}}{a_n^2}\right)\frac{1}{k}\\
        +\left(\frac{c'_{n-1}}{a_n}-\frac{c'_na_{n-1}+c'_{n+1}a_{n-2}}{a_n^2}+\frac{c'_{n+1}a_{n-1}^2}{a_n^3}\right)\frac{1}{k^2}+O\left(\frac{1}{k^3}\right).
    \end{multline*}
\end{corollary}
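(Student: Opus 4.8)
The plan is to combine the "rooftop polytope" construction from the proof of Proposition \ref{Bc_k asymptotic formula} (and Theorem \ref{FirstMainThm}) with the Hirzebruch--Riemann--Roch computation of $h^0(X,kL)$ given by the preceding Proposition, but now applied simultaneously in dimension $n$ and in dimension $n+1$. First I would recall from \eqref{translation} that for any $v\in N$, choosing $q\in\ZZ$ as in \eqref{qEq} and writing $P_{v,q}$ as in \eqref{PvqEq}, one has
$$
\langle\Bc_k(P),v\rangle=\frac{E_{P_{v,q}}(k)-E_{P\times[0,q]}(k)}{k\,E_P(k)}.
$$
The denominator is $k E_P(k)=k\sum_{j=0}^n a_jk^j$, and by the Proposition above, applied to $(X,L)$, the coefficients $a_j$ are exactly $\frac{L^j}{j!}\sum_{\sum\ell_i=n-j}\prod\frac{B_{\ell_i}}{\ell_i!}D_i^{\ell_i}$, giving the stated formulas for $a_n,a_{n-1},a_{n-2}$ and $\overline S$ via Remark \ref{scalar curvature}. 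For the numerator, I would observe that $E_{P_{v,q}}(k)=h^0(\overline\cX,k\overline\cL)$ with $\overline\cL\sim\cD=\sum b_i\cD_i+q\cD_{d+2}$, and $E_{P\times[0,q]}(k)=h^0(X\times\PP^1_{(q)},\ldots)$ corresponds to $q\,\cD_{d+2}$ in the sense of $P_{q\cD_{d+2}}$; hence the numerator equals $h^0$ of the line bundle associated to the virtual polytope $P_{v,q}-qP_{\cD_{d+2}}=P_v'=P_{\sum b_i\cD_i}$. Since the generalized Ehrhart polynomial / Euler characteristic is additive under the Grothendieck group of polytopes, $E_{P_{v,q}}(k)-E_{P\times[0,q]}(k)$ is a polynomial in $k$ of degree $n+1$ with zero constant term (Theorem \ref{Ehrhart polynomial}, as in the proof of Corollary \ref{Bc_k=0 for all k if large k}), so it equals $k\sum_{j=0}^n c'_{j+1}k^j$ where, by HRR applied to the $(n+1)$-dimensional toric manifold $\overline\cX$ with the virtual divisor $\overline\cL'=\sum b_i\cD_i$,
$$
c'_j=\frac{\overline\cL'^{\,j}}{j!}\sum_{\sum_{i=1}^d\ell_i=n+1-j}\prod_{i=1}^d\frac{B_{\ell_i}}{\ell_i!}\cD_i^{\ell_i}.
$$
Here I would use the Todd class factorization $\Td(\overline\cX)=\prod_{i=1}^{d+2}\Td([\cD_i])$, and note that the contributions of the two extra divisors $\cD_{d+1},\cD_{d+2}$ cancel against the virtual-polytope subtraction — more precisely, that working with $P_v'$ (equivalently the divisor $\overline\cL'=\overline\cL-q\cD_{d+2}$ and restricting the intersection theory to the relative setting over $\PP^1$) is exactly what replaces $-K_{\overline\cX}$ by $-K_{\overline\cX/\PP^1}=-K_{\overline\cX}-\cD_{d+1}-\cD_{d+2}=\sum_{i=1}^d\cD_i$. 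This yields the stated $c'_{n+1},c'_n,c'_{n-1}$.

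Having both polynomials, the expansion $\langle\Bc_k(P),v\rangle=\bigl(\sum_j c'_{j+1}k^j\bigr)/\bigl(\sum_j a_jk^j\bigr)$ is immediate, and the closed-form series in $k^{-1}$ follows by a standard geometric-series inversion: write $\sum_{j=0}^na_jk^j=a_nk^n\bigl(1+\sum_{i\ge1}(a_{n-i}/a_n)k^{-i}\bigr)$, invert, and collect powers of $k^{-1}$; tracking the multi-index bookkeeping gives exactly the displayed coefficient $\frac{\delta_{i0}}{a_n}+\sum_{\ell=1}^i\frac{(-1)^\ell}{a_n^{\ell+1}}\sum_{\sum i_m=i-\ell}\prod a_{n-i_m-1}$. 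Truncating at $j=0,1,2$ produces the final three-term expansion with the $1/k$ and $1/k^2$ coefficients written out.

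The main obstacle — the only non-bookkeeping point — is justifying the identification of the numerator $E_{P_{v,q}}(k)-E_{P\times[0,q]}(k)$ with a Hirzebruch--Riemann--Roch expression attached to the \emph{virtual} polytope $P_v'$, i.e., that one may legitimately apply HRR "term by term" to the difference of the two ample pieces. This requires the multilinearity of mixed volumes / intersection numbers over the Grothendieck group $\cP^*$ (Definition \ref{virtual def}), the additivity of the Ehrhart/Euler-characteristic polynomial under that group, and the compatibility of these with the Todd-class factorization on $\overline\cX$; once $P_{v,q}$ and $qP_{\cD_{d+2}}$ are both recognized as $P_\cD$ for honest ample $\cD$ on $\overline\cX$, everything reduces to the already-proven Proposition on $h^0$ plus linearity, and the relative canonical class bookkeeping $-K_{\overline\cX/\PP^1}=\sum_{i=1}^d\cD_i$. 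I expect the write-up of this step to be short but to need care in stating which intersection numbers on $\overline\cX$ survive after passing to $P_v'$.
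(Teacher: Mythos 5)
Your proposal is correct and follows essentially the same route as the paper: the rooftop polytope $P_{v,q}$, the identification of Ehrhart polynomials with Hirzebruch--Riemann--Roch expansions on $X$ and $\overline{\cX}$, the splitting-off of the $q$-dependent terms $qa_{j-1}+a_j$ to isolate the canonical coefficients $c'_j$ attached to $\overline{\cL}'=\sum_{i=1}^d b_i\cD_i$, and the geometric-series inversion. The one step you flag as needing care (justifying the $c'_j$ formula after removing the contributions of $\cD_{d+1},\cD_{d+2}$) is exactly the step the paper also carries out, by direct expansion of the Todd product rather than via Grothendieck-group additivity, so the two arguments coincide in substance.
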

\begin{proof}
We use the setup in Section \ref{Delzant subsec}. By \eqref{qbc coordinate},
$$
    \left\langle\Bc_k\left(P\right),v\right\rangle=\frac{E_{P_{v,q}}\left(k\right)}{kE_P\left(k\right)}-q-\frac{1}{k}.
$$
Write
\begin{align*}
    E_P\left(k\right)&=\sum_{j=0}^na_jk^j,\\
    E_{P_{v,q}}\left(k\right)&=\sum_{j=0}^{n+1}c_jk^j.
\end{align*}
We have
\begin{align*}
    a_j&=\frac{L^j}{j!}\cdot\sum_{\sum\limits_{i=1}^d\ell_i=n-j}\prod_{i=1}^d\frac{B_{\ell_i}}{\ell_i!}D_i^{\ell_i},\\
    c_j&=\frac{\overline{\cL}^j}{j!}\cdot\sum_{\sum\limits_{i=1}^{d+2}\ell_i=n+1-j}\prod_{i=1}^{d+2}\frac{B_{\ell_i}}{\ell_i!}\cD_i^{\ell_i}=qa_{j-1}+a_j+\frac{\overline{\cL}'^j}{j!}\cdot\sum_{\sum\limits_{i=1}^d\ell_i=n+1-j}\prod_{i=1}^d\frac{B_{\ell_i}}{\ell_i!}\cD_i^{\ell_i}.
\end{align*}
For simplicity, we define
$$
    a_j:=0
$$
for $j<0$ or $j>n$, and
$$
    c_j:=0
$$
for $j<0$ or $j>n+1$. We also define
$$
    c'_j:=c_j-qa_{j-1}-a_j=\frac{\overline{\cL}'^j}{j!}\cdot\sum_{\sum\limits_{i=1}^d\ell_i=n+1-j}\prod_{i=1}^d\frac{B_{\ell_i}}{\ell_i!}\cD_i^{\ell_i}.
$$
Note $c_j$ depends on the choice of $q$ while $c_j'$ is canonical.
Then
\begin{align*}
    \left\langle\Bc_k\left(P\right),v\right\rangle&=\frac{E_{P_{v,q}}\left(k\right)}{kE_P\left(k\right)}-q-\frac{1}{k}\\
    &=\frac{E_{P_{v,q}}\left(k\right)-qkE_P\left(k\right)-E_P\left(k\right)}{kE_P\left(k\right)}\\
    &=\frac{\sum\limits_{j=0}^{n+1}\left(c_j-qa_{j-1}-a_j\right)k^j}{\sum\limits_{j=0}^na_jk^{j+1}}\\
    &=\frac{\sum\limits_{j=0}^{n+1}c'_jk^j}{\sum\limits_{j=0}^na_jk^{j+1}}\\
    &=\sum_{j=0}^\infty k^{-j}\sum_{i=0}^jc'_{n+1-j+i}\left(\frac{\delta_{i0}}{a_n}+\sum_{\ell=1}^i\frac{\left(-1\right)^\ell}{a_n^{\ell+1}}\sum_{\sum\limits_{m=1}^\ell i_m=i-\ell}\prod_{m=1}^\ell a_{n-i_m-1}\right).
\end{align*}

Finally, recall that $a_0=c_0=1$. Therefore $c'_0=c_0-a_0=0$, and
$$
    \left\langle\Bc_k\left(P\right),v\right\rangle=\frac{\sum\limits_{j=1}^{n+1}c'_jk^j}{\sum\limits_{j=0}^na_jk^{j+1}}=\frac{\sum\limits_{j=0}^nc'_{j+1}k^j}{\sum\limits_{j=0}^na_jk^j}.
$$
This concludes the proof.
\end{proof}

Theorem \ref{smooth theorem} is a reformulation of Corollary \ref{combRR cor} in terms of mixed volumes of virtual polytopes.
\begin{proof}[Proof of Theorem \ref{smooth theorem}]
    This follows from Corollary \ref{combRR cor} and Lemma \ref{mixed volumes}.
\end{proof}

\section{First order expansion of quantized barycenters of reflexive polytopes}\label{Bc_k reflexive}

In this section, we study the special case that the lattice polytope is reflexive.

\begin{definition}{\rm \cite[Definition 2.3.12]{CLS11}}
\label{reflexive polytope}
    A lattice polytope in $M\otimes\RR$ is called {\it reflexive} if it contains the origin, and each of its facet is given by $\langle u,v\rangle=1$ where $v$ is some primitive vector of $N$.
\end{definition}

\begin{proposition}{\rm \cite{Mac71}, \cite[Theorem 19.1]{Gruber}}\label{reciprocity}
    For $k\in\NN$,
    $$
        E_P\left(-k\right)=\left(-1\right)^n\#\left(\left(\Int kP\right)\cap M\right).
    $$
\end{proposition}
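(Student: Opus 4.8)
The statement to prove is Ehrhart--Macdonald reciprocity: $E_P(-k) = (-1)^n \#\big((\Int kP)\cap M\big)$ for $k \in \NN$. The cleanest route, given that we already have the existence of the Ehrhart polynomial $E_P$ (Theorem \ref{Ehrhart polynomial}), is to establish the identity first for \emph{simplices} and then extend to general lattice polytopes by a triangulation/inclusion--exclusion argument, controlling the boundary contributions. So the plan is: (i) reduce to the case where $P$ is a lattice simplex; (ii) for a lattice simplex, set up a bijective cone-counting argument; (iii) reassemble the general case.

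\textbf{Step (i): reduction to simplices.} Triangulate $P$ using only its lattice points as vertices, writing $P = \bigcup_j \Delta_j$ with the $\Delta_j$ lattice simplices meeting along common faces. By inclusion--exclusion on the nerve of this triangulation, $E_P(k) = \sum_j E_{\Delta_j}(k) - (\text{lower-dimensional overlap terms})$, where every overlap term is the Ehrhart polynomial of a lattice polytope of dimension $< n$. Dually, the number of interior lattice points $\#((\Int kP)\cap M)$ decomposes as $\sum_j \#((\Int k\Delta_j)\cap M)$ plus contributions from relative interiors of the shared faces (lattice points lying on interior walls of the triangulation are interior to $P$ but not to any single $\Delta_j$). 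If we know reciprocity for each $\Delta_j$ and inductively for all the lower-dimensional pieces, then matching the two expansions (with the sign $(-1)^n$ versus $(-1)^{\dim}$ bookkeeping handled by the fact that a closed polytope is the disjoint union of the relative interiors of its faces, and an interior point of $kP$ is interior to exactly one face of the triangulation) yields the claim. The base case $n=0$ is trivial, and $n=1$ is an elementary check.

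\textbf{Step (ii): the simplex case.} Let $\Delta = \conv\{0, w_1, \dots, w_n\}$ after a lattice translation (a translation by a lattice vector changes neither $E_\Delta$ nor the interior-point count, and affine unimodular maps likewise preserve both sides). Work in the cone $C = \RR_{\geq 0}\langle (w_1,1),\dots,(w_n,1)\rangle \subset M_\RR \times \RR$ over $\Delta$ placed at height $1$. Lattice points $(u, k) \in (M\times\ZZ) \cap C$ at height $k$ correspond to $kP \cap M$, so the generating function $\sum_{k \geq 0} E_\Delta(k)\, t^k$ equals $\sigma_C(t) := \sum_{(u,k) \in C \cap (M\times\ZZ)} t^k$, a rational function of the form $\frac{h(t)}{(1-t)^{n+1}}$ by standard cone decomposition (the half-open fundamental parallelepiped of the generators tiles $C$). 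The same computation for the \emph{relative interior} $\Int C$ gives $\sum_{k\geq 1}\#((\Int k\Delta)\cap M)\, t^k = \sigma_{\Int C}(t)$, and the key algebraic identity is Stanley's reciprocity $\sigma_{\Int C}(t) = (-1)^{n+1}\sigma_C(1/t)$, which falls out because the half-open parallelepiped for $C$ and the one for $\Int C$ are related by the involution $x \mapsto (\sum(w_i,1)) - x$. Translating the identity of rational functions back into an identity of the polynomials $E_\Delta$ evaluated at $-k$ gives exactly $E_\Delta(-k) = (-1)^n \#((\Int k\Delta)\cap M)$ for all $k \geq 1$ (the shift by $1$ in the exponent of $t$ accounting for the drop from $(-1)^{n+1}$ to $(-1)^n$).

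\textbf{Main obstacle.} The genuinely delicate point is Step (i): making the inclusion--exclusion match up \emph{exactly}, so that the ``boundary'' lattice points of the triangulation -- those lying on shared faces -- are counted with the correct multiplicity and sign on both sides simultaneously. The clean way to organize this is to use that $kP$ (resp. $\Int kP$) is the disjoint union of relative interiors of faces of the triangulation, apply the already-established lower-dimensional reciprocity to each such relative-interior count, and observe that the signs $(-1)^{\dim F}$ telescope correctly; alternatively, one can sidestep triangulation entirely and instead quote the half-open decomposition of $P$ itself into simplices (Stanley), but either way the combinatorial bookkeeping of the signs is where care is needed. Everything else is routine generating-function manipulation.
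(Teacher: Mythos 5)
The paper does not actually prove this proposition: it is Ehrhart--Macdonald reciprocity, quoted from \cite{Mac71} and \cite[Theorem 19.1]{Gruber}, so there is no in-paper argument to compare against. Your proposal reproduces the standard textbook proof, and its core, Step (ii), is correct: placing the simplex at height $1$, tiling the cone by lattice translates of the half-open fundamental parallelepiped, relating the parallelepipeds of the closed and open cones by the involution $x\mapsto\sum_i(w_i,1)-x$, and invoking the elementary fact that $\sum_{k\ge1}f(-k)t^k=-F(1/t)$ for a polynomial $f$ with generating function $F$, does yield $E_\Delta(-k)=(-1)^n\#(\Int (k\Delta)\cap M)$ for a lattice $n$-simplex $\Delta$.

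The gap is in Step (i), exactly where you sense trouble but do not resolve it. ``The signs $(-1)^{\dim F}$ telescope correctly'' is not bookkeeping; it is the one nontrivial topological input of the whole argument. Concretely: writing $kP\cap M$ as the disjoint union of $\mathrm{relint}(kF)\cap M$ over all cells $F$ of the triangulation and applying the inductively known reciprocity to each cell gives the polynomial identity $E_P(k)=\sum_F(-1)^{\dim F}E_F(-k)$, hence $E_P(-k)=\sum_F(-1)^{\dim F}E_F(k)$. Expanding each $E_F(k)$ over the relative interiors of the faces $G\le F$ and exchanging the order of summation, the coefficient of $\#(\mathrm{relint}(kG)\cap M)$ becomes $\sum_{F\supseteq G}(-1)^{\dim F}$, and one must prove that this local Euler characteristic equals $(-1)^n$ when $\mathrm{relint}\,G\subset\Int P$ and $0$ when $G\subset\partial P$. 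That is a statement about links of faces in a triangulated ball (sphere versus ball), not a formal telescoping, and nothing in your write-up supplies it. Either prove that lemma, or commit to the alternative you mention only in passing: a half-open decomposition of $P$ (remove from each $\Delta_j$ the facets visible from a fixed generic interior point) makes $P$ and $\Int P$ literal disjoint unions of complementary half-open simplices, each of which satisfies reciprocity by the same parallelepiped argument, so that no inclusion--exclusion or Euler-characteristic computation is needed. As it stands, the proposal is a correct outline of the standard proof with its hardest step asserted rather than proved.
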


\begin{corollary}\label{reflexive reciprocity}
    Let $P\subset M_\RR$ be an $n$-dimensional reflexive polytope. Then
    $$
        E_P\left(-k\right)=\left(-1\right)^nE_P\left(k-1\right).
    $$
\end{corollary}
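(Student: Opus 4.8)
The plan is to combine Ehrhart--Macdonald reciprocity (Proposition \ref{reciprocity}) with the key combinatorial feature of reflexive polytopes: for such $P$, there are no lattice points strictly between consecutive dilates. First I would recall that by Proposition \ref{reciprocity}, $E_P(-k) = (-1)^n \#\big((\Int kP)\cap M\big)$, so it suffices to prove the identity
$$
\#\big((\Int kP)\cap M\big) = \#\big((k-1)P \cap M\big) = E_P(k-1), \qquad k\in\NN.
$$
In other words, the interior lattice points of $kP$ are exactly the lattice points of $(k-1)P$.

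The key step is establishing this lattice-point bijection, and this is where the reflexivity hypothesis does all the work. Writing $P = \bigcap_{j=1}^d\{u : \langle u, v_j\rangle \geq -1\}$ with each $v_j\in N$ primitive (Definition \ref{reflexive polytope}), the dilate is $kP = \bigcap_j\{u : \langle u,v_j\rangle \geq -k\}$ and its interior is $\Int kP = \bigcap_j\{u : \langle u, v_j\rangle > -k\}$. For a lattice point $u\in M$, integrality of $\langle u, v_j\rangle$ means $\langle u,v_j\rangle > -k$ is equivalent to $\langle u, v_j\rangle \geq -(k-1) = -k+1$. Intersecting over all $j$ gives $u\in\Int kP \cap M \iff u\in (k-1)P \cap M$, which is precisely the claimed equality of cardinalities. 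Substituting back into the reciprocity formula yields $E_P(-k) = (-1)^n E_P(k-1)$.

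I expect the main (and only) obstacle to be checking carefully that the facet description given in Definition \ref{reflexive polytope} really does yield $P = \bigcap_j\{\langle\cdot,v_j\rangle \geq -1\}$ with the $v_j$ primitive lattice vectors of $N$ --- i.e. that the inward-pointing facet normals are exactly the primitive generators and the right-hand sides are all $-1$ --- so that the "strict inequality $=$ weak inequality shifted by one" trick applies uniformly across all facets simultaneously. This is immediate from the definition, so the proof is genuinely short; the content is entirely in recognizing that reflexivity is exactly the condition that makes the interior-point count of $kP$ coincide with the lattice-point count of $(k-1)P$. One should also note that the identity of Ehrhart polynomials then holds as a polynomial identity in $k$ (not just for $k\in\NN$), since two polynomials agreeing on all positive integers agree identically.
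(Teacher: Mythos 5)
Your proposal is correct and follows essentially the same route as the paper: apply Ehrhart--Macdonald reciprocity and then identify $(\Int kP)\cap M$ with $(k-1)P\cap M$ via the integrality of $\langle u,v_j\rangle$ for lattice points $u$, which turns the strict inequality $\langle u,v_j\rangle>-k$ into $\langle u,v_j\rangle\geq -k+1$. No gaps; the argument matches the paper's proof step for step.
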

\begin{proof}
    By Definition \ref{refdef}, there exist $v_1,\ldots,v_d\in N$ so that $P$ can be written
    \begin{equation}\label{Pref}
        P=\bigcap_{i=1}^d\left\{u\in M_\RR\,:\,\langle u,v_i\rangle\geq-1\right\}.
    \end{equation}
    Then for $k\in\NN$,
    \begin{align*}
        (\Int kP)\cap M&=\bigcap_i\left\{u\in M\,:\,\langle u,v_i\rangle>-k\right\}\\
        &=\bigcap_i\left\{u\in M\,:\,\langle u,v_i\rangle\geq-k+1\right\}\\
        &=(k-1)P\cap M.
    \end{align*}
    By Theorem \ref{reciprocity},
    $$
        E_P(-k)=(-1)^n\#((\Int kP)\cap M)=(-1)^n\#((k-1)P\cap M)=(-1)^nE_P(k-1),
    $$
    as claimed.
\end{proof}

\begin{proposition}
\label{Ehrhart polynomial reflexive}
    Let $P$ be an $n$-dimensional reflexive polytope. Then $\wvol(\partial P)=n\Vol(P)$. In other words,
    $$
    E_P(k)=\Vol(P)k^n+\frac{n}{2}\Vol(P)k^{n-1}+\cdots+1.
    $$
    In particular,
    $$
    E_P(k)=\begin{cases}
        \Vol(P)k^2+\Vol(P)k+1,&n=2;\\
        \Vol(P)k^3+\frac{3}{2}\Vol(P)k^2+\left(\frac{1}{2}\Vol(P)+2\right)k+1,&n=3.
    \end{cases}
    $$
\end{proposition}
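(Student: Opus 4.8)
The claim is a statement about the Ehrhart polynomial $E_P(k) = \sum_{i=0}^n a_i k^i$ of a reflexive polytope: that $a_{n-1} = \tfrac{n}{2}\Vol(P)$, equivalently $\wvol(\partial P) = n\Vol(P)$ via Theorem~\ref{Ehrhart polynomial}. The natural strategy is to exploit the reciprocity identity $E_P(-k) = (-1)^n E_P(k-1)$ from Corollary~\ref{reflexive reciprocity}, which is the defining feature that distinguishes reflexive polytopes. The plan is to substitute the polynomial form into both sides of this functional equation and compare coefficients; the constraint $\wvol(\partial P)=n\Vol(P)$ should fall out of matching the two leading-order terms.

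Concretely, I would write $E_P(k) = a_n k^n + a_{n-1}k^{n-1} + \cdots$ with $a_n = \Vol(P)$ and $a_{n-1} = \tfrac12\wvol(\partial P)$ (Theorem~\ref{Ehrhart polynomial}). Then $E_P(-k) = (-1)^n a_n k^n + (-1)^{n-1}a_{n-1}k^{n-1} + \cdots$. On the other side, expand $E_P(k-1)$ by Taylor/binomial expansion: the $k^n$ term is $a_n k^n$, and the $k^{n-1}$ term is $a_{n-1} - n a_n$ (the $-na_n$ coming from $a_n(k-1)^n$). Multiplying by $(-1)^n$ gives $(-1)^n a_n k^n + (-1)^n(a_{n-1}-na_n)k^{n-1}+\cdots$. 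Equating the $k^{n-1}$ coefficients from the two expressions for $E_P(-k)$ yields $(-1)^{n-1}a_{n-1} = (-1)^n(a_{n-1}-na_n)$, i.e. $-a_{n-1} = a_{n-1} - na_n$, hence $a_{n-1} = \tfrac{n}{2}a_n = \tfrac{n}{2}\Vol(P)$, which is exactly $\wvol(\partial P) = n\Vol(P)$. This gives the first displayed formula. The cases $n=2$ and $n=3$ then follow by additionally using $a_0 = 1$ (Theorem~\ref{Ehrhart polynomial}): for $n=2$ the polynomial is determined by $a_2 = \Vol(P)$, $a_1 = \Vol(P)$, $a_0=1$; for $n=3$ one has $a_3 = \Vol(P)$, $a_2 = \tfrac32\Vol(P)$, $a_0 = 1$, and the remaining coefficient $a_1$ is pinned down by evaluating the reciprocity relation at, say, $k=1$: $E_P(-1) = (-1)^3 E_P(0) = -1$, which forces $-\Vol(P) + \tfrac32\Vol(P) - a_1 + 1 = -1$, giving $a_1 = \tfrac12\Vol(P)+2$.

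The computation is entirely routine — the only mild subtlety is bookkeeping the signs $(-1)^n$ versus $(-1)^{n-1}$ correctly and making sure the $n=3$ linear coefficient is extracted from a genuinely new piece of information (evaluation at a point, or equivalently matching a lower coefficient in the reciprocity expansion) rather than from the leading two coefficients alone. I expect no real obstacle; the substantive content is simply that reflexivity $\Rightarrow$ the functional equation $\Rightarrow$ the symmetry constraint on Ehrhart coefficients, and this is a classical observation (Hibh's palindromic-type phenomenon) that the authors are recording for later use.
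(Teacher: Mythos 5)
Your proposal is correct and follows essentially the same route as the paper: apply the reciprocity relation $E_P(-k)=(-1)^nE_P(k-1)$, compare the $k^{n-1}$ coefficients to get $a_{n-1}=\tfrac{n}{2}\Vol(P)$, and for $n=3$ pin down the linear coefficient by evaluating at $k=-1$ using $E_P(-1)=-E_P(0)=-1$. Your bookkeeping is in fact slightly cleaner than the paper's, which drops the factor $\tfrac12$ from $a_{n-1}=\tfrac12\wvol(\partial P)$ in its displayed intermediate equations.
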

This proposition is well-known \cite{Ehr67a,Gruber}, except the case
$n=3$ that seems new.

\begin{proof}
    By Corollary \ref{reflexive reciprocity}, since
    $$
        E_P(-k)=(-1)^n\Vol(P)k^n+(-1)^{n-1}\wvol(\partial P)k^{n-1}+\cdots+1,
    $$
    and
    $$
        (-1)^nE_P(k-1)=(-1)^n\Vol(P)(k-1)^n+(-1)^n\wvol(\partial P)(k-1)^{n-1}+\cdots+(-1)^n,
    $$
    comparing the coefficients we get
    $$
        (-1)^{n-1}\wvol(\partial P)=(-1)^{n-1}n\Vol(P)+(-1)^n\wvol(\partial P),
    $$
    i.e.,
    $$
        \wvol(\partial P)=\frac{n}{2}\Vol(P).
    $$
    It follows that when $n=2$,
    $$
        E_P(k)=\Vol(P)k^2+\Vol(P)k+1.
    $$
    When $n=3$,
    \begin{equation}\label{reciprocity n=3}
        E_P(k)=\Vol(P)k^3+\frac{3}{2}\Vol(P)k^2+a_1k+1
    \end{equation}
    for some coefficient $a_1$. To determine $a_1$, we may simply plug in special values for $k$. By Corollary \ref{reflexive reciprocity},
    $$
        E_P(-1)=-E_P(0)=-1.
    $$
    Letting $k=-1$ in \eqref{reciprocity n=3} gives
    $$
        -1=\frac{1}{2}\Vol(P)-a_1+1,
    $$
    i.e.,
    $$
        a_1=\frac{1}{2}\Vol(P)+2,
    $$
    as claimed.
\end{proof}

\begin{theorem}\label{Bc_k expansion}
    Let $P$ be reflexive. Then
    $$
        \Bc_k(P)=\Bc(P)\left(1+\frac{1}{2k}\right)+O(k^{-2}).
    $$
\end{theorem}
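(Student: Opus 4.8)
The plan is to specialize the general first-order expansion from Theorem \ref{FirstMainThm} (equivalently Proposition \ref{Bc_k asymptotic formula}) to the reflexive case, using Proposition \ref{Ehrhart polynomial reflexive} to control the boundary data. Recall that Theorem \ref{FirstMainThm} gives
$$
\Bc_k(P)=\Bc(P)+\frac{\wvol(\partial P)}{2\Vol(P)}\bigl(\wBc(\partial P)-\Bc(P)\bigr)k^{-1}+O(k^{-2}),
$$
so the entire problem reduces to showing $a_1(P)=\Bc(P)/2$, i.e.
$$
\frac{\wvol(\partial P)}{2\Vol(P)}\bigl(\wBc(\partial P)-\Bc(P)\bigr)=\frac{1}{2}\Bc(P).
$$
By Proposition \ref{Ehrhart polynomial reflexive} we already know $\wvol(\partial P)=n\Vol(P)$, so the left-hand side simplifies to $\tfrac n2\bigl(\wBc(\partial P)-\Bc(P)\bigr)$, and the identity we must prove is
$$
\wBc(\partial P)=\Bigl(1+\frac{1}{n}\Bigr)\Bc(P),
$$
i.e. $\int_{\partial P}u\,d\wvol = (n+1)\int_P u\,dx$.

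The key step is therefore this last integral identity, which I would prove by the divergence theorem / cone decomposition adapted to the lattice-normalized boundary measure. Write $P=\bigcap_{i=1}^d\{u: \langle u,v_i\rangle\ge -1\}$ as in \eqref{Pref}, with facets $F_i=P\cap\{\langle u,v_i\rangle=1\}$ (note the sign: since $P$ contains the origin, the facet defining covector is $-v_i$ with value $1$, or one adjusts accordingly). The crucial feature of the lattice-normalized measure is compatibility with this structure: if $C_i$ denotes the cone over $F_i$ from the origin, then $\Vol(C_i)=\frac{1}{n}\,\wvol(F_i)$, because the outward lattice-facet-normal on $F_i$ has ``lattice length'' exactly $1$ for a reflexive polytope (this is precisely the content of Definition \ref{refdef}: each facet lies on $\langle u,v_i\rangle=1$ with $v_i$ primitive). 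Summing over facets, $\Vol(P)=\frac1n\wvol(\partial P)$, recovering Proposition \ref{Ehrhart polynomial reflexive}, but more importantly the same cone decomposition applied to the vector-valued integral gives
$$
\int_P u\,dx=\sum_i\int_{C_i}u\,dx=\sum_i\frac{1}{n+1}\,\wvol(F_i)\,\Bc(F_i)=\frac{1}{n+1}\int_{\partial P}u\,d\wvol,
$$
where the middle equality is the standard fact that the barycenter of a cone over $F_i$ with apex $0$ is $\frac{n}{n+1}$ times the barycenter of $F_i$, together with the volume relation $\Vol(C_i)=\frac{1}{n+1}\cdot(\text{something})$ — more precisely $\int_{C_i} u\, dx = \frac{\Vol(C_i)}{n+1}\cdot(n+1)\Bc(F_i)\cdot\frac{1}{?}$; I would carry out this scaling computation carefully by integrating $u$ over the cone $\{tw : t\in[0,1],\ w\in F_i\}$ via the change of variables $u=tw$, whose Jacobian factor produces $\int_0^1 t^n\,dt=\frac{1}{n+1}$ against the facet integral. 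Combining, $\int_{\partial P} u\,d\wvol=(n+1)\int_P u\,dx$, hence $\wvol(\partial P)\wBc(\partial P)=(n+1)\Vol(P)\Bc(P)$, and dividing by $\wvol(\partial P)=n\Vol(P)$ yields $\wBc(\partial P)=\frac{n+1}{n}\Bc(P)$. Plugging back gives $a_1(P)=\frac n2\cdot\frac1n\Bc(P)=\frac12\Bc(P)$, which is the claim.

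The main obstacle is the bookkeeping of the lattice-normalized measure versus Lebesgue measure on each facet — one must verify that the factor relating $d\wvol$ on $F_i$ to the Euclidean surface measure, combined with the Euclidean volume of the cone $C_i$, conspires exactly so that $\Vol(C_i)=\frac1n\wvol(F_i)$ with no extra lattice determinant floating around; this is exactly where reflexivity (primitivity of $v_i$ and the value being $1$) is used, and it is essentially a restatement of why $a_{n-1}=\frac12\wvol(\partial P)$ in Ehrhart theory localizes facet-by-facet. Once that normalization is pinned down, everything else is the elementary cone-barycenter computation, and the expansion follows immediately from Theorem \ref{FirstMainThm}. An alternative, perhaps cleaner, route avoiding geometry entirely: apply the reciprocity relation $E_P(-k)=(-1)^nE_P(k-1)$ (Corollary \ref{reflexive reciprocity}) not just to $E_P$ but to the ``weighted'' Ehrhart quasi-polynomial $k\mapsto\sum_{u\in kP\cap M}u$ (using Ehrhart–MacDonald reciprocity for the linear function $\phi(u)=u$, cf.\ Remark \ref{LoiRemark} and \cite{BrionVergne}), extract the relation between its top two coefficients exactly as in the proof of Proposition \ref{Ehrhart polynomial reflexive}, and divide by $E_P(k)$; I would present the geometric proof as primary since it also yields the clean identity $\wBc(\partial P)=(1+\tfrac1n)\Bc(P)$ used later in the paper.
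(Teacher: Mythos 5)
Your proof is correct, but it takes a genuinely different route from the paper's. You reduce to the general formula of Theorem \ref{FirstMainThm} and then compute $\wBc(\partial P)$ directly, decomposing $P$ into the pyramids $C_i$ over its facets with apex at the origin and using that each facet of a reflexive polytope lies at lattice distance $1$, so that $\Vol(C_i)=\frac1n\wvol(F_i)$ and the change of variables $u=tw$ yields $\int_{C_i}u\,dx=\frac{1}{n+1}\int_{F_i}w\,d\wvol$; summing gives $\wBc(\partial P)=(1+\frac1n)\Bc(P)$ and hence $a_1(P)=\Bc(P)/2$. The paper instead never touches $\wBc(\partial P)$: it forms the rooftop polytope $P_i=\{(u,h):u\in P,\ -1\le h\le\langle u,v_i\rangle+1\}$, observes that the shift by $-1$ makes $P_i$ itself reflexive of dimension $n+1$, and applies Proposition \ref{Ehrhart polynomial reflexive} to \emph{both} $P$ and $P_i$ so that the subleading Ehrhart coefficient of $E_{P_i}$ is $\frac{n+1}{2}\Vol(P_i)$ for free; the quotient $E_{P_i}(k)/(kE_P(k))-2-\frac1k$ then produces the factor $(1+\frac1{2k})$ directly. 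Your approach buys the geometric identity $\wBc(\partial P)=(1+\frac1n)\Bc(P)$ as the primary lemma (the paper only records it afterwards as a consequence of combining Theorems \ref{FirstMainThm} and \ref{Bc_k expansion}) and incidentally re-proves $\wvol(\partial P)=n\Vol(P)$ without Ehrhart reciprocity; the paper's approach is shorter and avoids the Lebesgue-versus-lattice-normalization bookkeeping that you correctly flag as the delicate point. The one step you leave as a sketch --- the Jacobian computation showing the cone integral localizes to $\frac{1}{n+1}\int_{F_i}w\,d\wvol$ --- does go through exactly as you indicate, with reflexivity (primitivity of $v_i$ and the facet value $\pm1$) cancelling the lattice determinant.
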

\begin{proof}
    Recall \eqref{Pref}. Fix $1\leq i\leq d$. Consider the reflexive polytope
    $$
        P_i:=\{(u,h)\in\RR^n\times\RR~:~u\in P,\ -1\leq h\leq\langle u,v_i\rangle+1\}.
    $$
    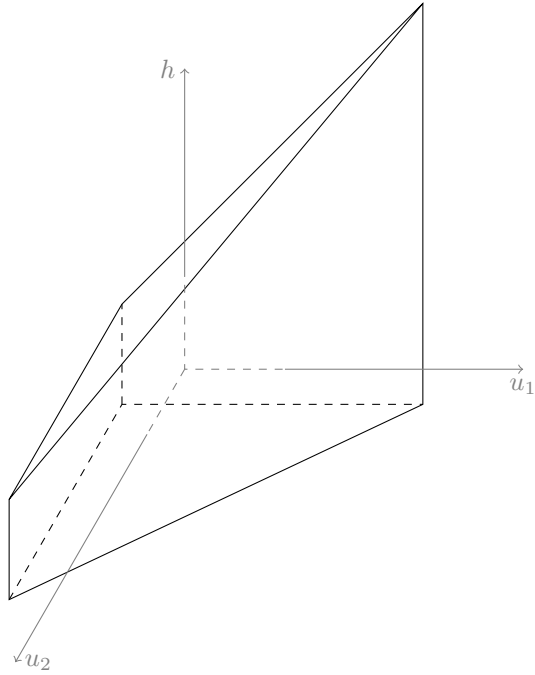
\begin{figure}
        \centering
        \begin{tikzpicture}
            \draw(-4/3-1,{-sqrt(3)})--(-4/3-1,{-4/3-sqrt(3)})--(8/3+1/2,{-4/3+1/2*sqrt(3)})--(8/3+1/2,{4+1/2*sqrt(3)})--cycle--(-4/3+1/2,{1/2*sqrt(3)})--(8/3+1/2,{4+1/2*sqrt(3)});
            \draw[dashed](-4/3-1,{-4/3-sqrt(3)})--(-4/3+1/2,{-4/3+1/2*sqrt(3)})--(8/3+1/2,{-4/3+1/2*sqrt(3)});
            \draw[dashed](-4/3+1/2,{-4/3+1/2*sqrt(3)})--(-4/3+1/2,{1/2*sqrt(3)});
            \draw[gray,dashed](0,4/3)--(0,0)--(4/3,0);
            \draw[gray,->](4/3,0)--(4.5,0)node[below]{$u_1$};
            \draw[gray,->](0,4/3)--(0,4)node[left]{$h$};
            \draw[gray,dashed](0,0)--(-1/2,{-1/2*sqrt(3)});
            \draw[gray,->](-1/2,{-1/2*sqrt(3)})--(-2.25,{-2.25*sqrt(3)})node[right]{$u_2$};
        \end{tikzpicture}
        \caption{The polytope $P_1$ corresponding to $\PP^2$ with $v_1=(1,0)$, $v_2=(0,1)$, $v_3=(-1,-1)$.}
    \end{figure}
    Notice that
    \begin{align*}
        E_{P_i}(k)&=\sum_{u\in P\cap\frac{1}{k}L}\#\left\{\left(u,-1\right),\left(u,-1+\frac{1}{k}\right),\cdots,\left(u,\langle u,v_i\rangle+1\right)\right\}\\
        &=\sum_{u\in P\cap\frac{1}{k}L}(k\langle u,v_i\rangle+2k+1)\\
        &=(2k+1)E_P(k)+k\sum_{u\in P\cap\frac{1}{k}L}\langle u,v_i\rangle\\
        &=(2k+1)E_P(k)+kE_P(k)\langle \Bc_k(P),v_i\rangle.
    \end{align*}
    Therefore,
    \begin{equation}
    \label{qbc coordinate inner}
        \langle \Bc_k(P),v_i\rangle=\frac{E_{P_i}(k)}{kE_P(k)}-2-\frac{1}{k}.
    \end{equation}
    Now,
    \begin{equation}\label{volP_i-reflexive}
        \Vol\left(P_i\right)=\int_P(\langle u,v_i\rangle+2)d\Vol=(\langle \Bc(P),v_i\rangle+2)\Vol(P).
    \end{equation}
    By Theorem \ref{Ehrhart polynomial reflexive},
    \begin{align*}
        \langle \Bc_k(P),v_i\rangle&=\frac{\Vol\left(P_i\right)k^{n+1}+\frac{n+1}{2}\Vol\left(P_i\right)k^n+O(k^{n-1})}{\Vol(P)k^{n+1}+\frac{n}{2}\Vol(P)k^n+O(k^{n-1})}-2-\frac{1}{k}\\
        &=\frac{\Vol\left(P_i\right)}{\Vol(P)}\left(1+\frac{1}{2k}\right)-2-\frac{1}{k}+O(k^{-2})\\
        &=\langle\Bc(P),v_i\rangle\left(1+\frac{1}{2k}\right)+O(k^{-2}).
    \end{align*}
Since $P$ is bounded, $\{v_i\}_{i=1}^d$ spans $N\otimes\RR$. This completes the proof.
\end{proof}

\section{Quantized barycenters of reflexive polygons}
\label{delta_k-invariant for toric del Pezzo surfaces}

In this section, we give an explicit formula for the quantized barycenters of reflexive polygons without using Ehrhart theory.
 
\begin{theorem}\label{barycenter ratio}
    Let $P$ be a reflexive polygon, i.e., $P\subset\RR^2$ is a lattice polytope given by
    \begin{equation}\label{polygon defn}
        P=\bigcap_{j=1}^d\{x\in M_\mathbb{R}~:~\langle x,v_j\rangle\geq-1\}
    \end{equation}
    for some primitive vectors $v_j\in N$. Then
    \begin{equation}
    \label{Bc_k in dim=2}
        \Bc_k(P)
        =\frac{(k+1)(2k+1)\wvol(\partial P)}{4+2k(k+1)\wvol(\partial P)}\Bc(P).
    \end{equation}
\end{theorem}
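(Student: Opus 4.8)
The plan is to compute the un-normalized lattice sum $\sum_{u\in kP\cap M}u$ exactly by peeling off the boundary layers of $kP$, and then to divide by $kE_P(k)$ with $E_P(k)=\#(kP\cap M)$ obtained from Pick's theorem; no Ehrhart-theoretic input is needed. Writing $P$ as in \eqref{polygon defn}, one has $\ell P=\bigcap_j\{\langle u,v_j\rangle\ge-\ell\}$ and, exactly as in the proof of Corollary \ref{reflexive reciprocity}, $\Int(\ell P)\cap M=(\ell-1)P\cap M$ for every $\ell\ge 1$. Iterating this identity yields the disjoint decomposition $kP\cap M=\{0\}\sqcup\bigsqcup_{\ell=1}^{k}\big(\partial(\ell P)\cap M\big)$, so the first step is to reduce the problem to evaluating $T_\ell:=\sum_{u\in\partial(\ell P)\cap M}u$ and summing over $\ell$.

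Next I would compute $T_\ell$ edge by edge. Let $w_1,\dots,w_d$ be the vertices of $P$ in cyclic order (indices mod $d$), $F_j=\Conv(w_j,w_{j+1})$ the edges, and $\lambda_j$ the lattice length of $F_j$, so $w_{j+1}-w_j=\lambda_j u_j$ with $u_j\in M$ primitive and $\sum_j\lambda_j=\wvol(\partial P)$. The dilated edge $\ell F_j$ contains exactly the lattice points $\ell w_j+ru_j$, $r=0,\dots,\ell\lambda_j$, and the key algebraic simplification is that their sum collapses to $\ell(\ell\lambda_j+1)\cdot\tfrac12(w_j+w_{j+1})$, i.e. $\ell(\ell\lambda_j+1)$ times the midpoint of $F_j$. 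Summing over $j$ and subtracting $\ell w_j$ once for each vertex (each is counted by its two adjacent edges) gives $T_\ell=\tfrac{\ell^2}{2}\sum_j\lambda_j(w_j+w_{j+1})+\tfrac{\ell}{2}\sum_j(w_j+w_{j+1})-\ell\sum_j w_j$; since $\sum_j(w_j+w_{j+1})=2\sum_j w_j$, the last two terms cancel, leaving the clean formula $T_\ell=\tfrac{\ell^2}{2}\sum_j\lambda_j(w_j+w_{j+1})$.

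It then remains to recognize the geometric meaning of $\sum_j\lambda_j(w_j+w_{j+1})$. Triangulating $P$ from the origin into the triangles $\Conv(0,w_j,w_{j+1})$, I would observe that on each such triangle the affine function $\langle\cdot,v_j\rangle$ takes values in $(-1,0)$ on the relative interior and on the legs $(0,w_j)$, $(0,w_{j+1})$, so (as $\langle M,v_j\rangle\subseteq\ZZ$) the only lattice points of $\Conv(0,w_j,w_{j+1})$ are $0$ together with the $\lambda_j+1$ lattice points of $F_j$; Pick's theorem then gives $\Vol(\Conv(0,w_j,w_{j+1}))=\tfrac{\lambda_j}{2}$, hence $\int_{\Conv(0,w_j,w_{j+1})}x\,dx=\tfrac{\lambda_j}{6}(w_j+w_{j+1})$ and, after summing, $\int_P x\,dx=\tfrac16\sum_j\lambda_j(w_j+w_{j+1})$ as well as $\Vol(P)=\tfrac12\wvol(\partial P)$. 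Therefore $T_\ell=3\ell^2\int_P x\,dx$ and $\sum_{u\in kP\cap M}u=3\big(\sum_{\ell=1}^{k}\ell^2\big)\int_P x\,dx=\tfrac{k(k+1)(2k+1)}{2}\int_P x\,dx$. Finally Pick's theorem applied to $kP$ gives $E_P(k)=k^2\Vol(P)+\tfrac{k}{2}\wvol(\partial P)+1$, and substituting $\int_P x\,dx=\Vol(P)\Bc(P)$ and $\Vol(P)=\tfrac12\wvol(\partial P)$ into $\Bc_k(P)=\tfrac{1}{kE_P(k)}\sum_{u\in kP\cap M}u$ and simplifying produces \eqref{Bc_k in dim=2}.

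There is no serious obstacle once the layer decomposition is in hand; the only places needing care are the vertex double-counting in $T_\ell$ (and checking the two lower-order terms cancel), writing the per-edge sum in the midpoint form that makes this cancellation and the later identification with $\int_P x\,dx$ transparent, and not forgetting the origin when applying Pick's theorem to the sub-triangles. I expect an off-by-one in the lattice-point count on $\ell F_j$, or mishandling that boundary-layer bookkeeping, to be the main pitfall to guard against.
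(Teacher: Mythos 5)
Your proposal is correct and follows essentially the same route as the paper's proof: the reflexivity-based layer decomposition $kP\cap M=\{0\}\sqcup\bigsqcup_{\ell=1}^{k}(\partial(\ell P)\cap M)$, the edge-by-edge sum collapsing to lattice-length-weighted midpoints with the vertex double-counting handled exactly as you describe, and the count $\#(\partial(\ell P)\cap M)=\ell\,\wvol(\partial P)$. The only difference is cosmetic: the paper identifies the resulting constant $\sum_j\lambda_j(w_j+w_{j+1})$ with $\Bc(P)$ by letting $k\to\infty$ in the formula just derived, whereas you identify it directly with $6\int_Px\,dx$ via the origin-centered triangulation and Pick's theorem --- both steps are valid and of comparable length.
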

\begin{proof}
    Since $\dim P=2$, each facet of $P$ is a line segment on the line $\langle x,v_j\rangle=-1$. Let 
    $$F_j^{(1)}$$ denote this facet. We may index the lattice points $F_j^{(1)}\cap M$ on the polygon $\partial P$ counterclockwise as $u_{j,0}^{(1)},\ldots,u_{j,N_j}^{(1)}$. Note that $u_{j,0}^{(1)}$ and $u_{j,N_j}^{(1)}$ are vertices of $P$ and will be indexed twice as they belong to two facets. Define $w_j:=u_{j,1}^{(1)}-u_{j,0}^{(1)}$, i.e., $w_j$ characterizes the distance between two neighboring lattice points on a line perpendicular to $v_j$. For any $0\leq l\leq N_j$,
    \begin{equation}\label{vertex dist}
    u_{j,l}^{(1)}=u_{j,0}^{(1)}+lw_j.
    \end{equation}

    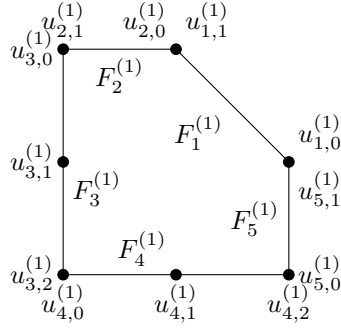
\begin{figure}
        \centering
        \begin{tikzpicture}
            \draw(1.5,0)node[above right]{$u_{1,0}^{(1)}$}--(0,1.5)node[above right]{$u_{1,1}^{(1)}$}node[midway,below left]{$F_1^{(1)}$}node[above left]{$u_{2,0}^{(1)}$}--(-1.5,1.5)node[above]{$u_{2,1}^{(1)}$}node[midway,below]{$F_2^{(1)}$}node[left]{$u_{3,0}^{(1)}$}--(-1.5,-1.5)node[midway,left]{$u_{3,1}^{(1)}$}node[left]{$u_{3,2}^{(1)}$}node[midway,below right]{$F_3^{(1)}$}node[below]{$u_{4,0}^{(1)}$}--(1.5,-1.5)node[midway,below]{$u_{4,1}^{(1)}$}node[below]{$u_{4,2}^{(1)}$}node[midway,above left]{$F_4^{(1)}$}node[right]{$u_{5,0}^{(1)}$}--cycle node[below right]{$u_{5,1}^{(1)}$}node[midway,left]{$F_5^{(1)}$};
            \filldraw(-1.5,-1.5)circle(2pt);
            \filldraw(-1.5,0)circle(2pt);
            \filldraw(-1.5,1.5)circle(2pt);
            \filldraw(0,-1.5)circle(2pt);
            \filldraw(0,1.5)circle(2pt);
            \filldraw(1.5,-1.5)circle(2pt);
            \filldraw(1.5,0)circle(2pt);
        \end{tikzpicture}
        \caption{The indexing of the edges and the lattice points of the polygon corresponding to the blow-up of $\PP^1\times\PP^1$.}
    \end{figure}
    
    We repeat this construction for any $k\in\NN$. Each facet of $kP$ is a line segment on the line $\langle x,v_j\rangle=-k$ and are denoted by $$F_j^{(k)}.$$ Since $kP$ is a dilation of $P$,
    $$
    F_j^{(k)}=k\cdot F_j^{(1)},
    $$
    and in particular, the end points are $k\cdot u_{j,0}^{(1)}$ and $k\cdot u_
    {j,N_j}^{(1)}=k\cdot u_{j,0}^{(1)}+kN_j\cdot w_j$ (recall \eqref{vertex dist}). We may also index the lattice points $F_j^{(k)}\cap M$ counterclockwise as $u_{j,0}^{(k)},\ldots,u_{j,kN_j}^{(k)}$, where for any $0\leq l\leq kN_j$,
    \begin{equation}\label{lattice pt dist}
    u_{j,l}^{(k)}=u_{j,0}^{(k)}+lw_j=k\cdot u_{j,0}^{(1)}+lw_j.
    \end{equation}

    \begin{figure}
        \centering
        \begin{tikzpicture}
            \filldraw(-3,0)circle(2pt);
            \filldraw(-2,0)circle(2pt);
            \filldraw(-1,0)circle(2pt);
            \filldraw(0,0)circle(2pt);
            \filldraw(1,0)circle(2pt);
            \filldraw(2,0)circle(2pt);
            \filldraw(3,0)circle(2pt);
            \filldraw(-3,1)circle(2pt);
            \filldraw(-2,1)circle(2pt);
            \filldraw(-1,1)circle(2pt);
            \filldraw(0,1)circle(2pt);
            \filldraw(1,1)circle(2pt);
            \filldraw(2,1)circle(2pt);
            \filldraw(3,1)circle(2pt);
            \filldraw(-3,2)circle(2pt);
            \filldraw(-2,2)circle(2pt);
            \filldraw(-1,2)circle(2pt);
            \filldraw(0,2)circle(2pt);
            \filldraw(1,2)circle(2pt);
            \filldraw(2,2)circle(2pt);
            \filldraw(3,2)circle(2pt);
            \filldraw(-3,3)circle(2pt);
            \filldraw(-2,3)circle(2pt);
            \filldraw(-1,3)circle(2pt);
            \filldraw(0,3)circle(2pt);
            \filldraw(1,3)circle(2pt);
            \filldraw(2,3)circle(2pt);
            \filldraw(3,3)circle(2pt);
            \filldraw(5,0)circle(2pt);
            \filldraw(-5,5)circle(2pt);
            \draw[->](-3.5,0)--(3.5,0);
            \draw[->](0,-.5)--(0,3.5);
            \draw(1,0)node[below]{$u_{j,0}^{(1)}$}--(-1,1)node[left]{$u_{j,1}^{(1)}$};
            \draw(2,0)node[below]{$u_{j,0}^{(2)}$}--(-2,2)node[left]{$u_{j,2}^{(2)}$};
            \draw(3,0)node[below]{$u_{j,0}^{(3)}$}--(-3,3)node[left]{$u_{j,3}^{(3)}$};
            \draw(5,0)node[below]{$u_{j,0}^{(k)}$}--(-5,5)node[left]{$u_{j,k}^{(k)}$};
            \draw(0,0)node[below left]{$0$};
            \draw(4,0)node{$\cdots$};
            \draw(-4,4)node{$\ddots$};
        \end{tikzpicture}
        \caption{The relationship between $F_j^{(1)}$ and a general $F_j^{(k)}$.}
        \label{fig:enter-label}
    \end{figure}
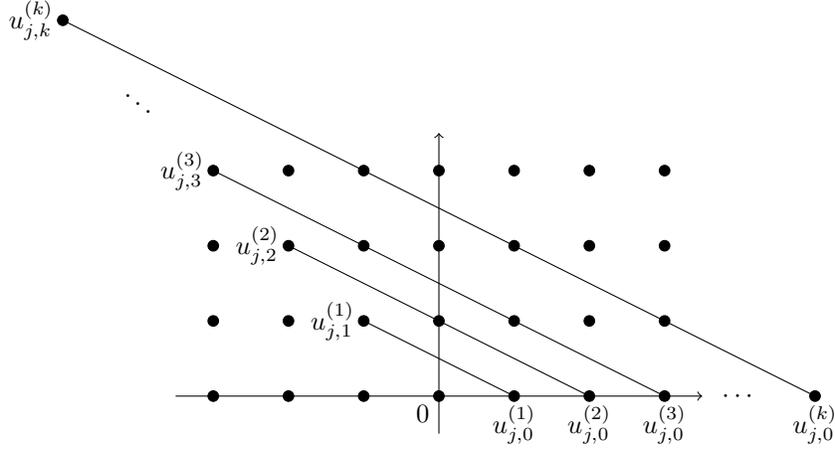
    
    Notice that the lattice points of $kP$ are
    \begin{align}
    kP\cap M&=\{0\}\cup\bigcup_{i=1}^{k}(\partial(iP)\cap M)\label{split bdry}\\
    &=\{0\}\cup\bigcup_{i=1}^{k}\bigcup_{j=1}^{d}\{u_{j,0}^{(i)},\ldots,u_{j,iN_j}^{(i)}\}.\label{split bdry points}
    \end{align}
    Indeed, $0\in kP\cap M$. For any $1\leq i\leq k$, $\partial(iP)\subseteq kP$. Hence
    $$
    kP\cap M\supseteq\{0\}\cup\bigcup_{i=1}^{k}(\partial(iP)\cap M).
    $$
    On the other hand, suppose there exists $x\in kP\cap M$ other than the origin that does not belong to any $\partial(iP)$. Then there is a unique integer $0\leq k_x<k$ such that $x\notin k_xP$ and $x\in\Int(k_x+1)P$. Recall \eqref{polygon defn}. Since $x\notin k_xP$, there is some $j$ such that $\langle x,v_j\rangle<-k_x$. On the other hand, since $x\in\Int(k_x+1)P$, we know $\langle x,v_j\rangle>-k_x-1$. However, $\langle x,v_j\rangle\in\ZZ$, which is a contradiction.

    We are now ready to compute the barycenters using \eqref{split bdry points}. Recall that in the expression \eqref{split bdry points} every vertex of $iP$ shows up twice.

    Let $m_j^{(k)}$ denote the midpoint of $F_j^{(k)}$, i.e.,
    \begin{equation}\label{m defn}
    m_j^{(k)}:=u_{j,0}^{(k)}+\frac{kN_j}{2}w_j.
    \end{equation}
    Since $F_j^{(k)}=k\cdot F_j^{(1)}$, we also have
    \begin{equation}\label{m relatn}
    m_j^{(k)}=k\cdot m_j^{(1)}.
    \end{equation}
    Recall the indexed lattice points
    $$
    F_j^{(k)}\cap M=\{u_{j,0}^{(k)},\ldots,u_{j,kN_j}^{(k)}\}.
    $$
    Since each vertex is indexed twice,
    \begin{equation}\label{number of pts}
        \#(\partial(kP)\cap M)=\sum_{j=1}^d\left(\#(F_j^{(k)}\cap M)-1\right)=k\sum_{j=1}^dN_j.
    \end{equation}
    Also, by \eqref{lattice pt dist}, \eqref{m defn}, and \eqref{m relatn},
    \begin{align*}
    \sum_{u\in\partial(kP)\cap M}u&=\sum_{j=1}^d\left(\frac{1}{2}u_{j,0}^{(k)}+\sum_{l=1}^{kN_j-1}u_{j,l}^{(k)}+\frac{1}{2}u_{j,kN_j}^{(k)}\right)\\
    &=\sum_{j=1}^d\left(\frac{1}{2}u_{j,0}^{(k)}+\sum_{l=1}^{kN_j-1}\left(u_{j,0}^{(k)}+l\cdot w_j\right)+\frac{1}{2}\left(u_{j,0}^{(k)}+kN_j\cdot w_j\right)\right)\\
    &=k\sum_{j=1}^dN_j\left(u_{j,0}^{(k)}+\frac{kN_j}{2}w_j\right)\\
    &=k\sum_{j=1}^dN_jm_j^{(k)}\\
    &=k^2\sum_{j=1}^dN_jm_j^{(1)}.
    \end{align*}
    Combining these two equations and \eqref{split bdry}, we get
    \begin{align*}
    \Bc_k(P)&=\frac{1}{k}\Bc(kP\cap M)\\
    &=\frac{1}{k\#(kP\cap M)}\sum_{u\in kP\cap M}u\\
    &=\frac{1}{k\left(1+\sum\limits_{i=1}^k\#(\partial(iP)\cap M)\right)}\sum_{i=1}^k\sum_{u\in\partial(kP)\cap M}u\\
    &=\frac{(k+1)(2k+1)}{6\left(1+\frac{k(k+1)}{2}\sum\limits_{j=1}^dN_j\right)}\sum_{j=1}^dN_jm_j^{(1)},
    \end{align*}
    and
    $$
    \Bc(P)=\lim_{k\to\infty}\Bc_k(P)=\frac{2\sum\limits_{j=1}^dN_jm_j^{(1)}}{3\sum\limits_{j=1}^dN_j}.
    $$
    Therefore
    $$
    \Bc_k(P)=\frac{(k+1)(2k+1)\sum\limits_{j=1}^dN_j}{4+2k(k+1)\sum\limits_{j=1}^dN_j}\Bc(P).
    $$
    Finally, by Definition \ref{normalized vol},  $N_j=\wvol(F_j^{(1)})$ (The line segment $F_j^{(1)}$ has $N_j$ fundamental parallelotopes). Then
    \begin{equation}\label{polygon wvol}
        \sum_{j=1}^dN_j=\wvol\left(\partial P\right).
    \end{equation}
    This concludes the proof.
\end{proof}

\begin{remark}
    We record here also the proof of Theorem \ref{barycenter ratio} using Ehrhart theory. Recall the proof of Theorem \ref{asym toric fano delta_k}. By Theorem \ref{Ehrhart polynomial reflexive},
    \begin{align*}
        E_{P_i}(k)&=\Vol(P_i)k^3+\frac{3}{2}\Vol(P_i)k^2+\left(\frac{1}{2}\Vol(P_i)+2\right)k+1,\\
        E_P(k)&=\Vol(P)k^2+\Vol(P)k+1.
    \end{align*}
    By \eqref{qbc coordinate inner} and \eqref{volP_i-reflexive},
    \begin{align*}
        \langle\Bc_k(P),v_i\rangle&=\frac{E_{P_i}(k)-(2k+1)E_P(k)}{kE_P(k)}\\
        &=\frac{\left(\Vol(P_i)-2\Vol(P)\right)k^3+\left(\frac{3}{2}\Vol(P_i)-3\Vol(P)\right)k^2+\left(\frac{1}{2}\Vol(P_i)-\Vol(P)\right)k}{k(\Vol(P)k^2+\Vol(P)k+1)}\\
        &=\frac{k^2+\frac{3}{2}k+\frac{1}{2}}{\Vol(P)k^2+\Vol(P)k+1}\left(\Vol(P_i)-2\Vol(P)\right)\\
        &=\frac{(k+1)(2k+1)\Vol(P)}{2+2k(k+1)\Vol(P)}\langle\Bc(P),v_i\rangle.
    \end{align*}
    The result follows since by Proposition \ref{Ehrhart polynomial reflexive},$\Vol(P)=\frac{1}{2}\wvol(\partial P)$.
\end{remark}

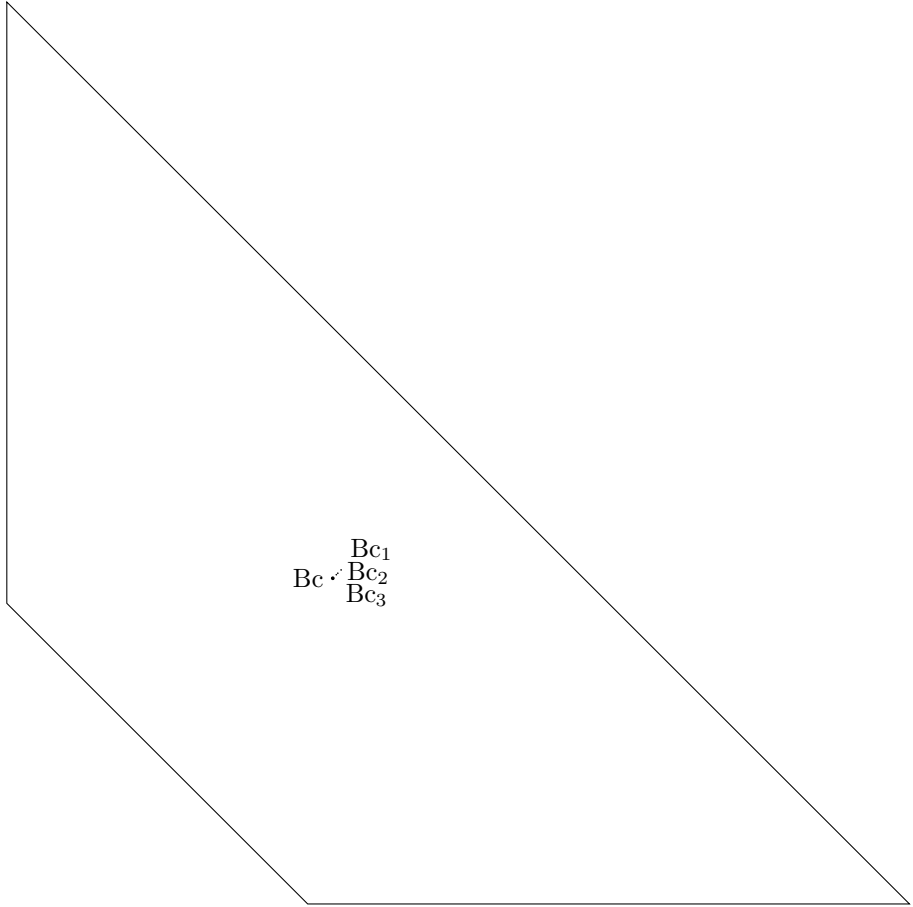
\begin{figure}
    \centering
    \begin{tikzpicture}
        \draw(8,-4)--(0,-4)--(-4,0)--(-4,8)--cycle;
        \filldraw({4/9},{4/9})circle(.1pt)node[above right]{$\Bc_1$};
        \filldraw(.4,.4)circle(.1pt)node[right]{$\Bc_2$};
        \filldraw({8/21},{8/21})circle(.1pt)node[below right]{$\Bc_3$};
        \filldraw({1/3},{1/3})circle(.5pt)node[left]{$\Bc$};
    \end{tikzpicture}
    \caption{The barycenter and quantized barycenters of the polygon corresponding to $\FF_1$. In particular, $\Bc_1=(\frac{1}{9},\frac{1}{9})$, $\Bc_2=(\frac{1}{10},\frac{1}{10})$, $\Bc_3=(\frac{2}{21},\frac{2}{21})$, and $\Bc=(\frac{1}{12},\frac{1}{12})$.}
\end{figure}
\begin{figure}
    \centering
    \begin{tikzpicture}
        \draw(4,0)--(0,4)--(-4,4)--(-4,-4)--(4,-4)--cycle;
        \filldraw(-.5,-.5)circle(.1pt)node[below left]{$\Bc_1$};
        \filldraw({-5/11},{-5/11})circle(.1pt)node[left]{$\Bc_2$};
        \filldraw({-56/129},{-56/129})circle(.1pt)node[above left]{$\Bc_3$};
        \filldraw({-8/21},{-8/21})circle(.5pt)node[right]{$\Bc$};
    \end{tikzpicture}
    \caption{The barycenter and quantized barycenters of the polygon corresponding to the blow-up of $\PP_1\times\PP_1$. In particular, $\Bc_1=(-\frac{1}{8},-\frac{1}{8})$, $\Bc_2=(-\frac{5}{44},-\frac{5}{44})$, $\Bc_3=(-\frac{14}{129},-\frac{14}{129})$, and $\Bc=(-\frac{2}{21},-\frac{2}{21})$.}
\end{figure}
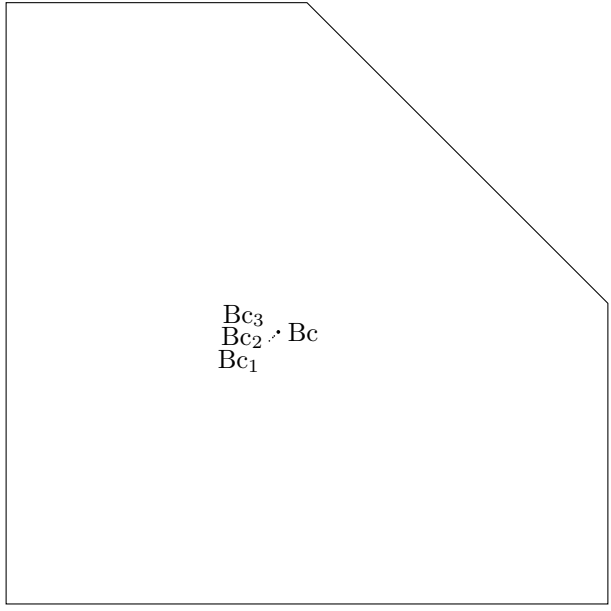

\begin{remark}
\label{rmk equivalence of Bc=0}
    As a consequence of Theorem \ref{barycenter ratio}, we see that the following are equivalent for toric del Pezzo surfaces:
    \begin{enumerate}[$\rm 1)$]
        \item $\Bc(P)=0$;
        \item $\Bc_k(P)=0$ for some $k$;
        \item $\Bc_k(P)=0$ for all $k$.
    \end{enumerate}
This strengthens Corollary
\ref{Bc_k=0 for all k if large k} that holds in all dimensions.
Moreover, this resolves Problem \ref{stabProb}
in dimension $n=2$ showing the answer is `no'.
    In addition, formula (\ref{Bc_k in dim=2}) tells us that the quantized barycenter $\Bc_k(P)$ lies on a certain line.
\end{remark}

\begin{remark}
\label{rmk higher dim toric}
    For higher dimensions, it is harder to derive such a formula this way, and in general $\Bc_k$ and $\Bc$ are not collinear. See Example \ref{not colinear}. In fact, for a polytope of dimension $3$ or higher, it has faces of dimension $2$ or higher, whose shapes and lattice points are more difficult to characterize.
\end{remark}

\section{Asymptotics of \texorpdfstring{$\delta_k$}{delta k}-invariants}\label{delta_k section}

In this section we study the asymptotics of $\delta_k$-invariant using the results of the previous sections.

\begin{theorem}\label{rmk delta_k=1 if k large}
    If $\delta_k(-K_X)=1$ for sufficiently large $k$, then $\delta_k(-K_X)=1$ for all $k$.    
\end{theorem}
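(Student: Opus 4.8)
The plan is to transfer the statement, via the Rubinstein--Tian--Zhang formula, to the already-established fact (Corollary \ref{Bc_k=0 for all k if large k}) that eventual constancy of $\Bc_k(P)$ propagates to all $k$. Let $P=P_{X,-K_X}$ denote the reflexive polytope associated to the toric Fano variety $X$, and let $v_1,\dots,v_d\in N$ be the primitive generators of the rays of its fan. By \eqref{RTZEq}, $\delta_k(-K_X)^{-1}=\max_{i\in\{1,\dots,d\}}\big[\langle\Bc_k(P),v_i\rangle+1\big]$ for every $k$.

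The first step is to record the equivalence: for each $k$, one has $\delta_k(-K_X)=1$ if and only if $\Bc_k(P)=0$. The implication ``$\Leftarrow$'' is immediate from the formula. For ``$\Rightarrow$'', note that $\delta_k(-K_X)=1$ forces $\langle\Bc_k(P),v_i\rangle\le 0$ for all $i$; since $P$ is bounded, the $v_i$ positively span $N_\RR$, so the cone $\{w\in M_\RR\,:\,\langle w,v_i\rangle\le 0,\ i=1,\dots,d\}$ is trivial, whence $\Bc_k(P)=0$. (In the same way $\max_i\langle\Bc_k(P),v_i\rangle\ge 0$, which re-derives $\delta_k(-K_X)\le 1$.)

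The second step is the reduction itself. The hypothesis says $\delta_k(-K_X)=1$ for all sufficiently large $k$, which by the equivalence means $\Bc_k(P)=0$ for all large $k$; in particular $\Bc_k(P)$ is constant for $k$ large. Corollary \ref{Bc_k=0 for all k if large k} then gives $\Bc_k(P)=\Bc(P)$ for all $k$, and since this common value is $0$ we conclude $\Bc_k(P)=0$ for every $k\in\NN$. Applying the equivalence once more yields $\delta_k(-K_X)=1$ for all $k$.

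I do not expect a genuine obstacle here: the substance is packaged in Corollary \ref{Bc_k=0 for all k if large k}, whose proof rests on each coordinate of $\Bc_k(P)$ being a rational function of $k$ with numerator and denominator of degree at most $n$. The only point demanding a little care is the equivalence in the first step --- concretely, verifying that boundedness of $P$ makes the dual cone of $\{v_1,\dots,v_d\}$ trivial, so that a nonzero quantized barycenter cannot satisfy $\langle\Bc_k(P),v_i\rangle\le 0$ simultaneously for all $i$.
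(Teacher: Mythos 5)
Your proof is correct and follows essentially the same route as the paper: translate $\delta_k(-K_X)=1$ into $\Bc_k(P)=0$ via the Rubinstein--Tian--Zhang formula, then invoke Corollary \ref{Bc_k=0 for all k if large k}. You additionally spell out the ``only if'' direction of the equivalence (boundedness of $P$ forces the dual cone of $\{v_1,\dots,v_d\}$ to be trivial), a detail the paper leaves implicit.
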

\begin{proof}
    By (\ref{toric delta_k formula}), $\delta_k(-K_X)=1$ if and only if $\Bc_k(P)=0$, where $P$ is the polytope associated to $X$. By Corollary \ref{Bc_k=0 for all k if large k}, $\Bc_k(P)=0$ for all $k$.
\end{proof}

\begin{proof}[Proof of Corollary \ref{MainAsympThm}]
    By Proposition \ref{Bc_k asymptotic formula}, (\ref{toric delta}) and (\ref{toric delta_k formula}),
    \begin{align*}
        \delta_k^{-1}
        &=\max_{i=1,\ldots,d}(\langle \Bc_k(P),v_i\rangle+b_i)     \\
        &=\max_{i=1,\ldots,d}\left(\langle \Bc(P),v_i\rangle+b_i +\frac{\wvol(\partial P)}{2\Vol(P)}\langle(\wBc(\partial P)-\Bc(P)),v_i\rangle k^{-1} +O(k^{-2})\right) \\
        &=\delta^{-1}+\frac{\wvol(\partial P)}{2\Vol(P)}\max_{i\in I}\langle\wBc(\partial P)-\Bc(P),v_i\rangle k^{-1} +O(k^{-2}).
    \end{align*}
    Therefore,
    \begin{align*}
        \delta_k-\delta
        &=\frac{\delta^2(\delta^{-1}-\delta_k^{-1})}{\delta\delta_k^{-1}}\\
        &=\dfrac{\delta^2(\delta^{-1}-\delta_k^{-1})}{1-\delta(\delta^{-1}-\delta_k^{-1})}    \\
        &=\delta^2\left(\frac{1}{\delta}-\frac{1}{\delta_k}\right)+O(k^{-2}) \\
        &=-\delta^2\frac{\wvol(\partial P)}{2\Vol(P)}\max_{i\in I}\langle\wBc(\partial P)-\Bc(P),v_i\rangle k^{-1}+O(k^{-2}).
    \end{align*}

    Since $\delta_k$ is the minimum of finitely many rational functions, for sufficiently large $k$, there is a rational function that dominates the rest. Indeed, consider the asymptotic expansion and take the one whose coefficient dominates in the lexicographic order. Therefore $\delta_k$ is a rational function of $k$ when $k$ is sufficiently large.
\end{proof}

\begin{remark}
    Recall
    $$
        \delta_k=\min_{i=1,\ldots,d}\frac{1}{\left\langle\Bc_k(P),v_i\right\rangle+b_i}.
    $$
    Since $\Bc_k(P)$ has an asymptotic expansion by Corollary \ref{Bc_k=0 for all k if large k}, for each $i=1,\ldots,d$,
    $$
        \frac{1}{\left\langle\Bc_k(P),v_i\right\rangle+b_i}
    $$
    has an asymptotic expansion. However, $\argmin_{i=1,\ldots,d}\left(\left\langle\Bc_k(P),v_i\right\rangle+b_i\right)^{-1}$ is not necessarily constant for all $k$. Therefore we can only deduce that there is an expansion that works for sufficiently large $k$, but not necessarily all $k$.
\end{remark}

\begin{theorem}
\label{asym toric fano delta_k}
    Let $X$ be Fano. Then
    $$
    \delta_k(-K_X)=\delta(-K_X)-\frac{1}{2k}\delta(-K_X)(1-\delta(-K_X))+O(k^{-2}).
    $$
\end{theorem}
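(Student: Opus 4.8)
The plan is to reduce to the reflexive refinement of the quantized–barycenter expansion already established in Theorem~\ref{Bc_k expansion}, feed it into the Fano formula \eqref{RTZEq} for $\delta_k$, and then invert the resulting asymptotic series in $1/k$. Since $X$ is Fano polarized by $-K_X$, the associated polytope $P:=P_{X,-K_X}$ is reflexive by Theorem~\ref{toric-corresp}; writing $P=\bigcap_{i=1}^d\{u:\langle u,v_i\rangle\ge-1\}$, all the constants $b_i$ equal $1$, so \eqref{RTZEq} reads $\delta_k(-K_X)^{-1}=\max_{i=1,\dots,d}(\langle\Bc_k(P),v_i\rangle+1)$.

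First I would substitute Theorem~\ref{Bc_k expansion}, $\Bc_k(P)=\Bc(P)(1+\tfrac1{2k})+O(k^{-2})$, into this formula, so that for each $i$,
$$
\langle\Bc_k(P),v_i\rangle+1=\bigl(\langle\Bc(P),v_i\rangle+1\bigr)+\tfrac1{2k}\langle\Bc(P),v_i\rangle+O(k^{-2}).
$$
Next I would handle the maximum. Let $I$ be the set of indices realizing $\delta^{-1}=\max_i(\langle\Bc(P),v_i\rangle+1)$; for $i\in I$ one has $\langle\Bc(P),v_i\rangle=\delta^{-1}-1$, whereas for $i\notin I$ the value $\langle\Bc(P),v_i\rangle+1$ is strictly less than $\delta^{-1}$, so those indices cannot attain the maximum once $k$ is large. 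On the maximizing set $I$ the leading two coefficients of the displayed expansion coincide, hence
$$
\delta_k(-K_X)^{-1}=\delta^{-1}+\tfrac{\delta^{-1}-1}{2k}+O(k^{-2}).
$$

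Finally I would invert: $\delta_k(-K_X)=\delta\bigl(1+\tfrac{\delta(\delta^{-1}-1)}{2k}+O(k^{-2})\bigr)^{-1}=\delta-\tfrac{\delta^2(\delta^{-1}-1)}{2k}+O(k^{-2})=\delta-\tfrac1{2k}\,\delta(1-\delta)+O(k^{-2})$, using $\delta^2(\delta^{-1}-1)=\delta-\delta^2=\delta(1-\delta)$. As a cross-check, the same coefficient falls out of Corollary~\ref{MainAsympThm} via the reflexive identities $\wvol(\partial P)=n\Vol(P)$ (Proposition~\ref{Ehrhart polynomial reflexive}) and $\wBc(\partial P)=(1+\tfrac1n)\Bc(P)$, which give $\tfrac{\wvol(\partial P)}{2\Vol(P)}\langle\wBc(\partial P)-\Bc(P),v_i\rangle=\tfrac12\langle\Bc(P),v_i\rangle$ and hence $a_1(X,-K_X)=-\tfrac12\delta(1-\delta)$.

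I do not anticipate a genuine obstacle here: the substantive input is Theorem~\ref{Bc_k expansion}, and what remains is bookkeeping plus an elementary power–series inversion. The only step meriting a line of justification is interchanging $\max_i$ with the asymptotic expansion; this is harmless because the expansion is needed only to order $k^{-1}$ and its first two coefficients are constant along the maximizing set $I$. (If one wished to upgrade the conclusion from "for sufficiently large $k$" to "for all $k$", one would instead invoke the rational–function structure of $\Bc_k(P)$ from Corollary~\ref{Bc_k=0 for all k if large k}, noting that in the Fano case $\delta=1$ forces $\Bc_k(P)=0$ for all $k$, so the only delicate case is trivial.)
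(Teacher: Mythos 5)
Your proposal is correct and takes essentially the same route as the paper's proof: substitute the reflexive expansion $\Bc_k(P)=\Bc(P)(1+\tfrac1{2k})+O(k^{-2})$ of Theorem~\ref{Bc_k expansion} into the formula $\delta_k^{-1}=\max_i(\langle\Bc_k(P),v_i\rangle+1)$, observe the maximum is governed by the indices computing $\delta$, and invert the resulting series. Your extra remarks (the explicit handling of the maximizing set $I$ and the cross-check against Corollary~\ref{MainAsympThm}) are consistent with, but not needed beyond, the paper's argument.
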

\begin{proof}
    By Theorem \ref{Bc_k expansion},
    \begin{align*}
        \delta_k^{-1}&=\max_{i=1,\ldots,d}\langle \Bc_k(P),v_i\rangle+1\\
        &=\max_{i=1,\ldots,d}\langle\Bc(P),v_i\rangle\left(1+\frac{1}{2k}\right)+1+O(k^{-2})\\
        &=\delta^{-1}+
        \frac{1}{2k}\left(\delta^{-1}-1\right)+O(k^{-2}).
    \end{align*}
    Therefore
    \begin{align*}
        \delta_k&=\frac{\delta}{1+\frac{1}{2k}(1-\delta)+O(k^{-2})}\\
        &=\delta-\frac{1}{2k}\delta(1-\delta)+O(k^{-2}).
    \end{align*}
\end{proof}

\begin{lemma}
    For a smooth del Pezzo surface $X$,
    $$
        h^0(X,-K_X)=1+K_X^2.
    $$
    In particular, if $X$ is toric with associated polytope $P$, then
    $$
        \wvol\left(\partial P\right)=K_X^2.
    $$
\end{lemma}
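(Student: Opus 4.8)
The plan is to establish the cohomological identity by Riemann--Roch together with Kodaira vanishing, and then to extract the combinatorial consequence from the standard toric dictionary and the reflexivity of $P$.

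First I would apply Hirzebruch--Riemann--Roch on the surface $X$ to the line bundle $-K_X$, which gives $\chi(X,-K_X)=\chi(\cO_X)+\frac12(-K_X)\cdot(-K_X-K_X)=\chi(\cO_X)+K_X^2$. Next, since $-K_X$ is ample (this is what ``del Pezzo'' means) and $\cO_X=K_X\otimes(-K_X)$, Kodaira vanishing yields $h^1(X,\cO_X)=h^2(X,\cO_X)=0$, so $\chi(\cO_X)=h^0(X,\cO_X)=1$; the vanishing of $h^2(X,\cO_X)$ can alternatively be read off from Serre duality and the non-effectivity of $K_X$. By the same reasoning applied to $-K_X=K_X\otimes(-2K_X)$ with $-2K_X$ ample, Kodaira vanishing gives $h^1(X,-K_X)=h^2(X,-K_X)=0$, hence $h^0(X,-K_X)=\chi(X,-K_X)=1+K_X^2$.

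For the toric statement, recall that a smooth toric del Pezzo surface $X$ has associated polytope $P=P_{-K_X}$ which is reflexive (Theorem \ref{toric-corresp}) and Delzant, and the toric dictionary identifies $h^0(X,-K_X)$ with $\#(P\cap M)=E_P(1)$. By Proposition \ref{Ehrhart polynomial reflexive} in dimension $n=2$ one has $E_P(k)=\Vol(P)k^2+\Vol(P)k+1$, so $E_P(1)=2\Vol(P)+1$; comparing with the first part gives $\Vol(P)=\frac12 K_X^2$, and therefore $\wvol(\partial P)=2\Vol(P)=K_X^2$ using $\wvol(\partial P)=n\Vol(P)$ from the same proposition. (Alternatively one could bypass the first part here and argue directly from $\Vol(P)=\Vol(P_{-K_X})=\frac1{n!}(-K_X)^n=\frac12 K_X^2$ via Lemma \ref{mixed volumes}.) I do not anticipate a genuine obstacle: the input is classical surface geometry and the lattice-point/section correspondence; the only steps needing a moment's attention are checking that ampleness of $-K_X$ suffices to kill $h^1$ and $h^2$ (so that $h^0=\chi$), and noticing that Proposition \ref{Ehrhart polynomial reflexive} is precisely the tool that converts the lattice-point count into the stated identity for $\wvol(\partial P)$.
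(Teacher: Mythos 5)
Your proof is correct. The first half coincides with the paper's argument: Kodaira vanishing to get $h^0(X,-K_X)=\chi(X,-K_X)$ and $\chi(\cO_X)=1$, then Riemann--Roch to produce $1+K_X^2$ (the paper phrases the vanishing of $h^{0,1}$ and $h^{0,2}$ via Hodge theory rather than via Kodaira applied to $\cO_X=K_X\otimes(-K_X)$, but this is cosmetic). For the toric half you take a slightly different route: you evaluate the reflexive Ehrhart polynomial $E_P(k)=\Vol(P)k^2+\Vol(P)k+1$ at $k=1$ and use $\wvol(\partial P)=2\Vol(P)$, whereas the paper counts lattice points directly via the decomposition $P\cap M=\{0\}\cup(\partial P\cap M)$ and the identity $\#(\partial P\cap M)=\sum_j N_j=\wvol(\partial P)$ established in the reflexive-polygon section. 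Both arguments rest on reflexivity of $P$ and on results already proved in the paper; yours passes through the intermediate identity $\Vol(P)=\tfrac12 K_X^2$ (which the paper obtains separately from Proposition \ref{Ehrhart polynomial reflexive}), while the paper's is marginally more elementary in that it never invokes the Ehrhart polynomial, only the lattice-point count. Either is acceptable.
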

\begin{proof}
    Since $-K_X$ is ample, by Kodaira Vanishing Theorem
    $\chi(\cO_X(-K_X))=h^0(X,-K_X)$ and by Riemann--Roch \cite[p. 472]{GH14}
    $
    \chi(\cO_X(-K_X))=\chi(\cO_X)+K_X^2=h^0(X,\cO_X)-h^1(X,\cO_X)+h^2(X,\cO_X)+K_X^2= 1-h^{0,1}_{\bar\partial}+h^{0,2}_{\bar\partial}+K_X^2=1+K_X^2
    $.
    
    By \eqref{split bdry}, \eqref{number of pts} and \eqref{polygon wvol},
    $
        h^0(X,-K_X)=\#(P\cap M)=1+\sum_{j=1}^dN_j=1+\wvol\left(\partial P\right).
    $
\end{proof}

\begin{theorem}
\label{thm delta_k for toric del Pezzo}
Let $X$ be a smooth toric del Pezzo surface. Then
\begin{equation}
        \delta_k(-K_X)=\left(1+\frac{(k+1)(2k+1)K_X^2}{4+2k(k+1)K_X^2}\left((\delta(-K_X))^{-1}-1\right)\right)^{-1},
\end{equation}
In particular, 
\begin{equation*}
    \frac{\delta_k(-K_X)}{\delta(-K_X)}=1-\frac{1}{2k}(1-\delta(-K_X))+O\left(k^{-2}\right).
\end{equation*}
\end{theorem}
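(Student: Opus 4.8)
The plan is to consolidate three ingredients that are already in place: the Rubinstein--Tian--Zhang formula \eqref{RTZEq}, the closed-form expression for $\Bc_k(P)$ of a reflexive polygon in Theorem \ref{barycenter ratio}, and the preceding Lemma identifying $\wvol(\partial P)$ with $K_X^2$. First I would record that, since $X$ is a smooth toric del Pezzo surface, Theorem \ref{toric-corresp} exhibits the polytope $P := P_{X,-K_X}$ as a reflexive polygon, written as in \eqref{polygon defn} with primitive ray generators $v_1,\dots,v_d \in N$ and all $b_i = 1$ (as $-K_X = \sum_i D_i$). In this normalization the Blum--Jonsson barycenter formula reads $\delta(-K_X)^{-1} = \max_{i}\big(\langle \Bc(P),v_i\rangle + 1\big)$, equivalently $\max_i \langle \Bc(P),v_i\rangle = \delta(-K_X)^{-1} - 1 \ge 0$, while \eqref{RTZEq} gives $\delta_k(-K_X)^{-1} = \max_i\big(\langle \Bc_k(P),v_i\rangle + 1\big)$.

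Next I would substitute Theorem \ref{barycenter ratio}, which asserts $\Bc_k(P) = c_k\,\Bc(P)$ with
$$c_k := \frac{(k+1)(2k+1)\wvol(\partial P)}{4 + 2k(k+1)\wvol(\partial P)}.$$
Since $\wvol(\partial P) > 0$ we have $c_k > 0$ for every $k \in \NN$, so multiplication by $c_k$ commutes with $\max_i$: the index maximizing $\langle \Bc(P),v_i\rangle$ also maximizes $\langle \Bc_k(P),v_i\rangle$, and hence $\max_i \langle \Bc_k(P),v_i\rangle = c_k\big(\delta(-K_X)^{-1}-1\big)$. (When $\Bc(P) = 0$ both sides vanish and the identity degenerates to $\delta_k = \delta = 1$.) Therefore $\delta_k(-K_X)^{-1} = 1 + c_k\big(\delta(-K_X)^{-1}-1\big)$; inverting this and replacing $\wvol(\partial P)$ by $K_X^2$ via the preceding Lemma yields the claimed closed formula.

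For the asymptotic statement I would expand $c_k$ as $k\to\infty$: writing $W := \wvol(\partial P)$,
$$c_k = \frac{2k^2 + 3k + 1}{2k^2 + 2k + 4/W} = 1 + \frac{1}{2k} + O(k^{-2}).$$
Setting $t := \delta(-K_X)^{-1} - 1 \ge 0$ and using $t/(1+t) = 1 - \delta(-K_X)$,
$$\frac{\delta_k(-K_X)}{\delta(-K_X)} = \frac{1+t}{1 + c_k t} = 1 - \frac{t}{2k(1+t)} + O(k^{-2}) = 1 - \frac{1}{2k}\big(1 - \delta(-K_X)\big) + O(k^{-2}),$$
which is the stated expansion; alternatively it follows immediately from Theorem \ref{asym toric fano delta_k}.

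The argument carries no genuine obstacle: it is an algebraic assembly of \eqref{RTZEq}, Theorem \ref{barycenter ratio}, and the preceding Lemma. The only point requiring a word of justification is the interchange of $\max_i$ with the scaling by $c_k$, which is legitimate precisely because $c_k > 0$; the rest is elementary manipulation of rational functions.
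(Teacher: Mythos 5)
Your proposal is correct and follows essentially the same route as the paper's proof: combine \eqref{RTZEq} with the colinearity formula of Theorem \ref{barycenter ratio}, substitute $\wvol(\partial P)=K_X^2$ from the preceding Lemma, and invert and expand. Your explicit justification that the positive scalar $c_k$ commutes with $\max_i$ (and the remark on the degenerate case $\Bc(P)=0$) is a welcome small refinement that the paper leaves implicit.
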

\begin{proof}
    By Theorem \ref{barycenter ratio},
    \begin{align*}
        \delta_k(-K_X)^{-1}-1
        &=\max_{i=1,\ldots,d}\langle \Bc_k(P),v_i\rangle\\
        &=\frac{(k+1)(2k+1)K_X^2}{4+2k(k+1)K_X^2}\max_{i=1,\ldots,d}\langle \Bc(P),v_i\rangle\\
        &=\frac{(k+1)(2k+1)K_X^2}{4+2k(k+1)K_X^2}\left(\delta(-K_X)^{-1}-1\right).
    \end{align*}
    Therefore
    \begin{align*}
        \delta_k(-K_X)&=\left(1+\frac{(k+1)(2k+1)K_X^2}{4+2k(k+1)K_X^2}\left((\delta(-K_X))^{-1}-1\right)\right)^{-1}\\
        &=\left(1+\frac{1+\frac{3}{2k}+O(k^{-2})}{1+\frac{1}{k}+O(k^{-2})}\left((\delta(-K_X))^{-1}-1\right)\right)^{-1}\\
        &=\left((\delta(-K_X))^{-1}+\frac{1}{2k}\left((\delta(-K_X))^{-1}-1\right)O(k^{-2})\right)^{-1}\\
        &=\left(1-\frac{1}{2k}(1-\delta(-K_X))+O\left(k^{-2}\right)\right)\delta(-K_X),
    \end{align*}
    as claimed.
\end{proof}

\section{Relation to test configurations}
\label{tc section}

In this section, we explain how the coefficients of the asymptotic expansion relate to the higher Donaldson--Futaki invariants.
\begin{definition}
    \begin{itemize}
        \item A test configuration for $(X,L)$ is a triple $(\cX,\cL,\sigma)$, where $\sigma$ is a $\CC^*$-action on the pair $(\cX,\cL)$, with a $\CC^*$-equivariant projective morphism $\pi:\cX\to\CC$ such that $\cL$ is $\pi$-relatively very ample, i.e., $\pi$ lifts to $\cX\hookrightarrow\PP^r_{\CC^1}=\Proj\CC[t][Z_0,\ldots,Z_r]$ which pulls $\cO(1)$ back to $\cL$, and each fiber over $t\in\CC^*\subseteq\CC$, denoted by $(\cX_t,\cL_t)$, is isomorphic to $(X,L)$.
        \item We say that a test configuration is a product configuration if $\cX\cong X\times\CC$, and a trivial configuration if the $\CC^*$ action on $\cX\cong X\times\CC$ is trivial on the first factor.
        \item The $\CC^*$-action on $\cL$ induces a $\CC^*$-action on $H^0(\cX_0,\cL_0^k)$. Let $w_k$ denote the sum of the weights of this action.
        \item Define the Donaldson--Futaki coefficients to be the coefficients of the expansion
        $$
            \frac{w_k}{kH^0\left(\cX_0,\cL_0^k\right)}=\sum_{i=0}^\infty DF_i\left(\cX,\cL,\sigma\right)k^{-i}.
        $$
    \end{itemize}
\end{definition}

\subsection{Construction of the rooftop test configuration}\label{test config def}
    We construct a product test configuration
    \beq\lb{ProdTC}
    (\cX,\cL,\sigma_{q(v)})
    \eeq
    as follows.
    Recall the pair $(\overline{\cX},\overline{\cL})$ constructed in \eqref{cXcL}. The projection $M_\RR\times\RR\to\RR$ corresponds to an equivariant map $\pi:\overline{\cX}\to\PP^1$ (see Figure \ref{test configuration}). The subfan consisting of cones generated by $(v_1,0),\ldots,(v_d,0),(0,1)$ (i.e., the cones on or above the hyperplane $M_\RR\times\{0\}$) is the product of the fan of $X$ and the fan of $\CC$, and hence corresponds to the toric manifold $\pi^{-1}(\CC)\cong X\times\CC$. Similarly, the subfan consisting of cones generated by $(v_1,0),\ldots,(v_d,0),(v,-1)$ (i.e., the cones on or below the hyperplane $M_\RR\times\{0\}$) corresponds to $\pi^{-1}(\PP^1\setminus\{0\})\cong X\times\CC$. Let $\cX$ denote the latter open submanifold and $\cL$ the restriction of $\overline{\cL}$ over $\cX$. Note that this product structure implies that each fiber of $\pi$ is a copy of $(X,L)$.
    
    The lattice point $(0,-1)\in N\times\ZZ=\Hom(\CC^*,(\CC^*)^{n+1})$ induces a $\CC^*$-action expressed on the torus $(\CC^*)^{n+1}=(\CC^*)^{n}\times \CC^*$ (the first factor is the open-orbit
    in $X$ considered as a toric variety) as
    $$
        \sigma(t):z\mapsto\sigma(t).z:=\left(z_1,\ldots,z_n,t^{-1}z_{n+1}\right),
        \q t\in\CC^*.
    $$
     By definition of a toric variety (both $X$ and $X\times \CC$ are such) it is enough to specify the action on the open orbit and it will extend uniquely to $X\times \CC$. Of course, it is not obvious what the action does at $t=0$ as $1/t$ blows up; this shows in passing that what we construct is not a trivial, but rather a product, test configuration (this test configuration is trivial if and only if $\overline{\cX}=X\times\PP^1$, which is precisely when $v=0$).
     We can lift it to a $\CC^*$-action on the line bundle $\overline{\cL}$ by (recall $q=q(v)$ defined in \eqref{qEq})
    $$
        \sigma_{q(v)}(t):\left(z,f\right)\mapsto\left(\sigma(t).z,t^{-q}f\right).
    $$
    More precisely, for a section $s_{u'}\in H^0(\overline{\cX},\overline{\cL})$ given by $z\mapsto z^{u'}$ under the standard local trivialization over $(\CC^*)^{n+1}$, where $u'\in M\times\ZZ$,
    \begin{align*}
        \sigma_{q(v)}(t)\left(z,z^{u'}\right)&=\left(\left(z_1,\ldots,z_n,t^{-1}z_{n+1}\right),t^{-q}z_1^{u'_1}\cdots z_n^{u'_n}z_{n+1}^{u'_{n+1}}\right)\\
        &=\left(\left(z_1,\ldots,z_n,t^{-1}z_{n+1}\right),t^{u'_{n+1}-q}z_1^{u'_1}\cdots z_n^{u'_n}\left(t^{-1}z_{n+1}\right)^{u'_{n+1}}\right)\\
        &=\left(\sigma_t\left(z\right),t^{u'_{n+1}-q}\sigma_t\left(z\right)^{u'}\right).
    \end{align*}
    Thus,
    $$
        \sigma_{q(v)}(t)\left(s_{u'}\right)=t^{u'_{n+1}-q}s_{u'}\circ\sigma_t,
    $$
    i.e., a section $s_{u'}$ has weight $u'_{n+1}-q$.

    We define this triple $(\cX,\cL,\sigma_{q(v)})$ to be the product test configuration \eqref{ProdTC}. This test configuration is also studied in \cite[Remark 2.4]{Sai}, \cite[Proposition 2.2.2]{Don02}.

\subsection{Quantized barycenters and higher Donaldson--Futaki
invariants}

\begin{theorem}
    Let $\left(\cX,\cL,\sigma_v\right)$ be the product test configuration \eqref{ProdTC} constructed in \S\ref{test config def}. Then
    $$
        \left\langle\Bc_k\left(P\right),v\right\rangle=\sum_{i=0}^\infty DF_i\left(\cX,\cL,\sigma_{q(v)}\right)k^{-i}.
    $$
\end{theorem}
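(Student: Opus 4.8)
\emph{Strategy.} The plan is to establish the \emph{exact} identity
$$
\frac{w_k}{k\,h^0\!\left(\cX_0,\cL_0^k\right)}=\left\langle\Bc_k(P),v\right\rangle,\qquad k\in\NN,
$$
and then observe that its left-hand side is, by the definition of the Donaldson--Futaki coefficients, $\sum_{i\ge0}DF_i\!\left(\cX,\cL,\sigma_{q(v)}\right)k^{-i}$, while its right-hand side has the asymptotic expansion $\sum_{i\ge0}\langle a_i(P),v\rangle k^{-i}$ coming from Theorem \ref{FirstMainThm} (equivalently, both are the coefficient sequences of the same rational function of $k$, cf.\ Corollary \ref{Bc_k=0 for all k if large k}). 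The two expansions therefore agree term by term. This also re-proves Theorem \ref{tc main thm}, since it identifies $DF_i\!\left(\cX,\cL,\sigma_{q(v)}\right)=\langle a_i(P),v\rangle$.

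\emph{Step 1: the central fibre.} Recall from \S\ref{test config def} that $\cX$ is the open toric subvariety of $\overline{\cX}$ obtained by deleting the ray $(0,1)$ from the fan, i.e.\ $\cX=\overline{\cX}\setminus\cD_{d+1}$, and that $\pi:\cX\to\CC\cong\PP^1\setminus\{0\}$ is a product, $\cX\cong X\times\CC$. Hence the central fibre is $\cX_0=\cD_{d+2}$, the torus-invariant divisor attached to the ray $(v,-1)$ (see Figure \ref{test configuration}). Under the correspondence of Theorem \ref{toric-corresp}, $\cD_{d+2}$ is cut out of $P_{v,q}$ by $\langle(u,h),(v,-1)\rangle=-q$, i.e.\ it is the ``roof'' facet $\{h=\langle u,v\rangle+q\}$; since $(v,-1)$ is primitive, projection to the first $n$ coordinates carries this facet isomorphically, as a lattice polytope, onto $P$. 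Therefore $(\cX_0,\cL_0)\cong(X,L)$ and $h^0\!\left(\cX_0,\cL_0^k\right)=\#(kP\cap M)=E_P(k)$.

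\emph{Step 2: the weights, and summing.} The monomial sections of $\overline{\cL}^k$ are indexed by $(u',h')\in kP_{v,q}\cap(M\times\ZZ)$, and the computation of \S\ref{test config def}, applied to $\overline{\cL}^k$ (where $\sigma_{q(v)}$ twists the fibre by $t^{-kq}$), shows that $s_{(u',h')}$ has weight $h'-kq$. The restriction map $H^0\!\left(\overline{\cX},\overline{\cL}^k\right)\to H^0\!\left(\cX_0,\cL_0^k\right)$ is $\CC^*$-equivariant and sends a monomial to a nonzero section precisely when $(u',h')$ lies on the roof facet, i.e.\ when $h'=\langle u',v\rangle+kq$; these restrictions form exactly the monomial basis of $H^0\!\left(\cX_0,\cL_0^k\right)$, now indexed by $u'\in kP\cap M$, and each carries weight $(\langle u',v\rangle+kq)-kq=\langle u',v\rangle$. (Note the $kq$ cancels, so the answer does not depend on the auxiliary choice \eqref{qEq} — a useful consistency check.) Summing over this basis,
$$
w_k=\sum_{u\in kP\cap M}\langle u,v\rangle=\Big\langle\sum_{u\in kP\cap M}u,\;v\Big\rangle=k\,E_P(k)\,\langle\Bc_k(P),v\rangle
$$
by the definition of $\Bc_k(P)$; dividing by $k\,h^0\!\left(\cX_0,\cL_0^k\right)=k\,E_P(k)$ yields the exact identity, and the reduction in the first paragraph completes the proof.

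\emph{Main obstacle.} The only real difficulty is bookkeeping: pinning down which of $\cD_{d+1},\cD_{d+2}$ is the central fibre $\cX_0$ (i.e.\ fixing the orientation of $\pi:\overline{\cX}\to\PP^1$ consistently with Figure \ref{test configuration}), and tracking the $\CC^*$-linearization through the tensor power $\overline{\cL}^{\,k}$ and through restriction to $\cX_0$, so that the twist $t^{-kq}$ cancels against the height $\langle u,v\rangle+kq$ of the roof. Once these identifications are fixed there is no analytic content — everything reduces to counting the lattice points of $P_{v,q}$, exactly as in the proof of Corollary \ref{combRR cor}.
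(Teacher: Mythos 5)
Your proof is correct and follows essentially the same route as the paper: identify $\cX_0=\cD_{d+2}$ with the roof facet of $P_{v,q}$, note that the torus-invariant sections of $\overline{\cL}^k$ restricting nontrivially to $\cX_0$ are exactly those indexed by lattice points on that facet with weight $\langle u',v\rangle$ (the $kq$ twist cancelling), and sum the weights to get $w_k=kE_P(k)\langle\Bc_k(P),v\rangle$. The extra bookkeeping you supply (why $\cD_{d+2}$ is the central fibre, why the restrictions give a basis) only makes explicit what the paper cites from Fulton.
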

\begin{proof}
    Consider the lattice points on the facet
    $$
        \left\{\left(u,\left\langle u,v\right\rangle+q\right)\in M_\RR\times\RR\,:\,u\in P\cap k^{-1}M\right\}.
    $$
    They correspond to the torus invariant sections of $\overline{\cL}^k$ that do not vanish along $\cX_0=\cD_{d+2}$ \cite[p. 61]{Ful93}.
    For $u\in P\cap k^{-1}M$, let $s_u$ denote the section of $\overline{\cL}^k$ corresponding to $(u,\langle u,v\rangle+q)$, which has weight $k\langle u,v\rangle$. It follows that the section $s_u|_{\cL_0^k}$ on $\cL_0^k$ also has weight $k\langle u,v\rangle$. Since $\{s_u|_{\cL_0^k}\}_{u\in P\cap k^{-1}M}$ spans $H^0(\cX_0,\cL_0^k)$, taking the sum of all their weights, we get
    $$
        \frac{w_k}{kH^0\left(\cX_0,\cL_0^k\right)}=\frac{1}{kE_P\left(k\right)}\sum_{u\in P\cap k^{-1}M}k\left\langle u,v\right\rangle=\left\langle\Bc_k\left(P\right),v\right\rangle.
    $$
    This concludes the proof.
\end{proof}

Combining this with Corollary \ref{combRR cor}, we obtain an expression for the higher Donaldson--Futaki invariants of this test configuration.
\begin{corollary}
    Assume $X$ is smooth. Let $\left(\cX,\cL,\sigma_v\right)$ be the product test configuration \eqref{ProdTC} constructed in \S\ref{test config def}. Then
    $$
        DF_j\left(\cX,\cL,\sigma_{q(v)}\right)=\sum_{i=0}^jc'_{n+1-j+i}\Bigg(\frac{\delta_{i0}}{a_n}+\sum_{\ell=1}^i\frac{\left(-1\right)^\ell}{a_n^{\ell+1}}\sum_{\sum\limits_{m=1}^\ell i_m=i-\ell}\prod_{m=1}^\ell a_{n-i_m-1}\Bigg),
    $$
    where $a_j$ and $c'_j$ are defined in Corollary \ref{combRR cor}.
\end{corollary}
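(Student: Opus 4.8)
The plan is to obtain the formula by comparing, coefficient by coefficient, two expansions of the single scalar sequence $\langle\Bc_k(P),v\rangle$. The first expansion is the one furnished by the theorem immediately preceding this corollary, which identifies
$$
\langle\Bc_k(P),v\rangle=\sum_{i=0}^\infty DF_i\big(\cX,\cL,\sigma_{q(v)}\big)k^{-i};
$$
by the definition of the Donaldson--Futaki coefficients this is just the expansion of $w_k/\big(kH^0(\cX_0,\cL_0^k)\big)$ in descending powers of $k$. The second expansion is that of Corollary \ref{combRR cor}, which under the standing hypothesis that $X$ is smooth (so Hirzebruch--Riemann--Roch applies both on $X$ and on the $(n{+}1)$-dimensional toric manifold $\overline{\cX}$ of \eqref{cXcL}) gives
$$
\langle\Bc_k(P),v\rangle=\sum_{j=0}^\infty k^{-j}\sum_{i=0}^j c'_{n+1-j+i}\Bigg(\frac{\delta_{i0}}{a_n}+\sum_{\ell=1}^i\frac{(-1)^\ell}{a_n^{\ell+1}}\sum_{\sum\limits_{m=1}^\ell i_m=i-\ell}\prod_{m=1}^\ell a_{n-i_m-1}\Bigg),
$$
with $a_j,c'_j$ the Ehrhart / mixed-volume coefficients defined there (the bracketed factor being nothing but the coefficient of $k^{-j}$ in the reciprocal power series of $E_P(k)=\sum_j a_jk^j$, multiplied into $\sum_j c'_{j+1}k^j$).

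First I would record that the left-hand side is a genuine rational function of $k$: by Corollary \ref{Bc_k=0 for all k if large k} each coordinate of $\Bc_k(P)$, and hence $\langle\Bc_k(P),v\rangle$, is a quotient of two polynomials of degree at most $n$, and therefore has a \emph{unique} expansion as a formal (equivalently, asymptotic) series in $k^{-1}$. Consequently the two displays above are literally the same expansion, and equating the coefficients of $k^{-j}$ for each $j\in\NN\cup\{0\}$ yields
$$
DF_j\big(\cX,\cL,\sigma_{q(v)}\big)=\sum_{i=0}^j c'_{n+1-j+i}\Bigg(\frac{\delta_{i0}}{a_n}+\sum_{\ell=1}^i\frac{(-1)^\ell}{a_n^{\ell+1}}\sum_{\sum\limits_{m=1}^\ell i_m=i-\ell}\prod_{m=1}^\ell a_{n-i_m-1}\Bigg),
$$
which is the assertion. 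In short, the corollary is a purely formal consequence of the preceding theorem together with Corollary \ref{combRR cor}, with no analytic input beyond the weight computation already carried out.

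I do not anticipate any genuine obstacle; the only point deserving a line of care is the legitimacy of matching coefficients, i.e.\ the uniqueness of the expansion, which is exactly what the rationality statement of Corollary \ref{Bc_k=0 for all k if large k} supplies. Should one prefer to avoid invoking rationality at all, it suffices to note that both right-hand sides are asymptotic expansions of the same function $k\mapsto w_k/\big(kH^0(\cX_0,\cL_0^k)\big)$ in powers of $1/k$, and such expansions are unique, which again forces the coefficientwise identity.
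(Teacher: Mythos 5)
Your proposal is correct and matches the paper's (implicit) proof exactly: the corollary is obtained by combining the preceding theorem, which identifies $\langle\Bc_k(P),v\rangle$ with the generating series of the $DF_i$, with the expansion of Corollary \ref{combRR cor}, and equating coefficients of $k^{-j}$. Your added remark on uniqueness of the asymptotic expansion (via rationality of $\Bc_k(P)$ in $k$) is a reasonable extra precaution that the paper leaves implicit.
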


\section{Examples}\label{example sec}

\subsection{Two simple polygon examples}

\begin{example}\label{example1 of normalized vol}
    Let $P$ be the associated polytope of $\PP^2$. Specifically, the three edges of $\partial P$ are $F_1:=\{(-1,y)\in\RR^2~:~-1\leq y\leq2\}$,
    $F_2:=\{(x,-1)\in\RR^2~:~-1\leq x\leq2\}$, and 
    $F_3:=\{(x,y)\in\RR^2~:~x+y-1=0,~-1\leq x\leq2\}$.
    Let $C_1$, $C_2$, and $C_3$ denote the standard cubes of the induced affine sublattices on $F_1$, $F_2$, and $F_3$, respectively.
    Precisely, we may choose $C_1:=\{(-1,y)\in\RR^2~:~-1\leq y\leq0\}$,
    $C_2:=\{(x,-1)\in\RR^2~:~-1\leq x\leq0\}$ and 
    $C_3:=\{(x,y)\in\RR^2~:~x+y-1=0,~-1\leq x\leq0\}$. 
    Then $\Vol(C_1)=\Vol(C_2)=1$ and $\Vol(C_3)=\sqrt{2}$. 
    Thus one has
    \begin{align*}
        \wvol\left(\partial P\right)
        &=\wvol\left(F_1\right)+\wvol\left(F_2\right)+\wvol\left(F_3\right)          \\
        &=\frac{\Vol\left(F_1\right)}{\Vol\left(C_1\right)} +\frac{\Vol\left(F_2\right)}{\Vol\left(C_2\right)}+\frac{\Vol\left(F_3\right)}{\Vol\left(C_3\right)}  \\
        &=3+3+\frac{3\sqrt{2}}{\sqrt{2}}\\
        &=9,\\
        \wBc\left(\partial P\right)
        &=\frac{1}{\wvol\left(\partial P\right)}\left(\int_{F_1}\left(x,y\right)d\wvol+\int_{F_2}\left(x,y\right)d\wvol+\int_{F_3}\left(x,y\right)d\wvol\right)\\
        &=\frac{1}{9}\left(\left(-3,\frac{3}{2}\right)+\left(\frac{3}{2},-3\right)+\left(\frac{3}{2},\frac{3}{2}\right)\right)\\
        &=0.
    \end{align*}
    By Remark \ref{scalar curvature}, the average scalar curvature
    $
        \overline{S}=\frac{\wvol\left(\partial P\right)}{\Vol\left(P\right)}=2.
    $
\end{example}
\begin{example}\label{example2 of normalized vol}
    Let $P'$ be the associated polytope of blow-up of $\PP^2$ at one point. Specifically, the four edges of $\partial P'$ by $F_1':=\{(-1,y)\in\RR^2~:~0\leq y\leq2\}$,
    $F_2':=\{(x,-1)\in\RR^2~:~0\leq x\leq2\}$, 
    $F_3':=\{(x,y)\in\RR^2~:~x+y=1,~-1\leq x\leq2\}$, and 
    $F_4':=\{(x,y)\in\RR^2~:~x+y=-1,~-1\leq x\leq0\}$.
    Let $C_1'$, $C_2'$, $C_3'$, and $C_4'$ denote the standard cube of the induced lattice on $F_1'$, $F_2'$, $F_3'$, and $F_4'$, respectively.
    Then $\Vol(C_1')=\Vol(C_2')=1$ and $\Vol(C_3)=\Vol(C_4')=\sqrt{2}$.
    Thus one has
    \begin{align*}
        \wvol\left(\partial P'\right)
        &=\wvol\left(F_1'\right)+\wvol\left(F_2'\right)+\wvol\left(F_3'\right)+\wvol\left(F_4'\right)          \\
        &=\frac{\Vol\left(F_1'\right)}{\Vol\left(C_1'\right)} +\frac{\Vol\left(F_2'\right)}{\Vol\left(C_2'\right)}+\frac{\Vol\left(F_3'\right)}{\Vol\left(C_3'\right)}+\frac{\Vol\left(F_4'\right)}{\Vol\left(C_4'\right)}  \\
        &=2+2+\frac{3\sqrt{2}}{\sqrt{2}}+\frac{\sqrt{2}}{\sqrt{2}}\\
        &=8,\\
        \wBc\left(\partial P'\right)
        &=\frac{1}{\wvol\left(\partial P'\right)}\left(\int_{F_1'}\left(x,y\right)d\wvol+\int_{F_2'}\left(x,y\right)d\wvol+\int_{F_3'}\left(x,y\right)d\wvol+\int_{F_4'}\left(x,y\right)d\wvol\right)\\
        &=\frac{1}{8}\left(\left(-2,2\right)+\left(2,-2\right)+\left(\frac{3}{2},\frac{3}{2}\right)+\left(-\frac{1}{2},-\frac{1}{2}\right)\right)\\
        &=\left(\frac{1}{8},\frac{1}{8}\right).
    \end{align*}
    By Remark \ref{scalar curvature}, the average scalar curvature
    $
        \overline{S}=\frac{\wvol\left(\partial P\right)}{\Vol\left(P\right)}=2.
$
\end{example}

\subsection{A 3-dimensional reflexive example
with non-collinear quantized barycenters}

\begin{example}\label{not colinear}
    The Fano 3-fold $No.3.29$ has a toric structure with the primitive elements of the rays of its fan being the columns of the matrix  \cite[p. 43]{WW82}
    \begin{align*}
        \begin{pmatrix}
        1 &0 &0 &1  &0  &-1  \\
        0 &1 &0 &-1 &-1 &-1  \\
        0 &0 &1 &0  &0  &-1 
    \end{pmatrix}.
    \end{align*}
    Then a computer calculation shows
    \begin{align*}
        \Bc_1(P)&=(\frac{6}{28},\frac{-12}{28},\frac{3}{28}), \\
        \Bc_2(P)&=(\frac{51}{260},\frac{-99}{260},\frac{24}{260})     \\
        \Bc_3(P)&=(\frac{201}{1071},\frac{-387}{1071},\frac{93}{1071}).
    \end{align*}
    It follows that $\Bc_k$ are not on a line. This means 
    that $a_{i}(P)\not=0$ for some $i>1$.
\end{example}

\subsection{Virtual rooftop polytopes and verification of the terms
in the expansion}

\begin{example}
    Consider $X=\PP^1$ with $L=-K_X\sim\{0\}+\{\infty\}$. Then $P=[-1,1]$. For $v\in\ZZ$, let $q=|v|+1$. Then
    $$
        P_{v,q}=\left\{\left(u,h\right)\in\left[-1,1\right]\times\RR_+\,:\,h\leq v\cdot u+q\right\}.
    $$
    This corresponds to $\overline{\cX}=\FF_{|v|}$ with $\overline{\cL}=E+H+qF_\infty$, where $E$ is the $-|v|$-curve, $H$ the torus invariant $|v|$-curve, and $F_\infty$ the fiber over $\infty\in\PP^1$. See Figure \ref{Fv example}.
    \begin{figure}
        \centering
        \begin{tikzpicture}
            \draw[<->](-1,0)node[left]{$H$}--(1,0)node[right]{$E$};
            \draw[<->](0,1)node[above]{$F_0$}--(0,0)--(1,-1)node[below right]{$F_\infty$};
            \draw(3,-1)--(5,-1)node[midway,below]{$F_0$}--(5,2)node[midway,right]{$H$}--(3,0)node[midway,above left]{$F_\infty$}--cycle node[midway,left]{$E$};
            \draw(7,-1)rectangle(9,2)node[midway]{$\overline{\cX}$};
            \draw[->](9.5,.5)--(10.5,.5)node[midway,above]{$\pi$};
            \draw(11,-1)node[right]{$0$}--(11,2)node[midway,right]{$\PP^1$}node[right]{$\infty$};
        \end{tikzpicture}
        \caption{The fan and the polytope $P_{v,q}$ of the pair $(\overline{\cX},\overline{\cL})$.}
        \label{Fv example}
    \end{figure}
\end{example}

\begin{example}
    Let $X=\PP^2$ with $L=-K_X$. We have $v_1=(1,0)$, $v_2=(0,1)$, $v_3=(-1,-1)$. Let us verify Corollary \ref{combRR cor} with $v=(1,0)$.

    Let $D_1,D_2,D_3$ be the corresponding toric divisors. Geometrically, they correspond to virtual polytopes $P_{D_i}=P_{-K_X+D_i}-P_{-K_X}$. We have $L=-K_X=D_1+D_2+D_3$. By Corollary \ref{combRR cor},
    \begin{align*}
        a_2&=\frac{L^2}{2}=\frac{9}{2},\\
        a_1&=\frac{L^2}{2}=\frac{9}{2},\\
        a_0&=\frac{L^2+\sum\limits_{i<j}D_i\cdot D_j}{12}=1.
    \end{align*}
    For divisors in \eqref{cD def}, we have $\overline{\cL}'=-K_{\cX/\PP^1}=\cD_1+\cD_2+\cD_3$, and 
    \begin{align*}
        \cD_2&\sim\cD_3,&\cD_1^3&=-2,&\cD_1^2\cdot\cD_2&=-1,&\cD_1\cdot\cD_2^2&=0,&\cD_2^3&=1.
    \end{align*}
    By Corollary \ref{combRR cor},
    \begin{align*}
        c'_3&=\frac{{\overline{\cL}'}^3}{6}=0,\\
        c'_2&=\frac{{\overline{\cL}'}^3}{4}=0,\\
        c'_1&=\frac{\overline{\cL}'\cdot\left({\overline{\cL}'}^2+\sum\limits_{1\leq i<j\leq d}\cD_i\cdot\cD_j\right)}{12}=0.
    \end{align*}
    Therefore,
    $$
        \left\langle\Bc_k\left(P\right),v\right\rangle=\frac{\sum\limits_{j=0}^nc'_{j+1}k^j}{\sum\limits_{j=0}^na_jk^j}=0.
    $$
\end{example}

\subsection{Three non-stabilizing quantized barycenter examples}

\begin{example}
\lb{BSExam}
Batyrev--Selivanova symmetry of $P$ \cite{BS99} implies
$\Bc_k(P)=0$ for all $k\in\NN$, and is furthermore equivalent to it when $n\le6$ \cite{JLR24}. This relies on
    Nill--Paffenholz's exhaustive computer search showing there are exactly three toric Fano manifolds $X_1$, $X_2$, $X_3$ of dimension up to 8 whose associated polytopes have $\Bc=0$ but are not symmetric in the sense of Batyrev--Selivanova
    \cite[Proposition 2.1]{NP11}.

$X_1$ is 7-dimensional. Let $P_1$ be the associated polytope. Then a computer calculation shows
\begin{align}
\label{P1BCEq}
\Bc_1(P_1)&=\frac{16}{2257}(-1,-1,-1,1,1,1,2), \nonumber \\
\Bc_2(P_1)&=\frac{60}{27121}(-1,-1,-1,1,1,1,2), \nonumber \\
\Bc_3(P_1)&=\frac{2744}{2579721}(-1,-1,-1,1,1,1,2).
\end{align}
It is interesting to note that our results give a refinement of
a theorem of Ono--Sano--Yotsutani \cite[Theorem 1.6]{OSY12}, who showed 
that $(X_1,-kK_{X_1})$ is not Chow semistable for $k$ large enough.
Chow stability of $(X_1,-kK_{X_1})$ is equivalent to existence of $k$-anticanonical balanced metrics (see \cite[Theorem 4]{PS03}, \cite[Theorem 3.2]{Zh96}, \cite[Theorem 0.1]{Luo98}),
so this means Ono--Sano--Yotsutani showed that $X_1$ does not admit $k$-anticanonical balanced metrics
for $k$ large enough. But in fact from the above computation and Corollary \ref{Bc_k=0 for all k if large k}, $\delta_k(-K_{X_1})<1$ for all $k$ except at most $7$ values of $k$.
Thus, by \cite[Theorem 2.3]{RTZ21}, 
$(X_1,-kK_{X_1})$ is actually not Chow semistable for all but at most $7$ values of $k$.

Next, $X_2=X_1\times\PP^1$. So $P_2=P_1\times[-1,1]$. Then $\Bc_k(P_2)=(\Bc_k(P_1),0)$ (see \eqref{P1BCEq}).

Finally,
$X_3$ is 8-dimensional. Let $P_3$ be the associated polytope. Then a computer calculation shows
\begin{align*}
\Bc_1(P_3)&=\frac{32}{5459}(-1,-1,-1,1,1,1,1,2),\\
\Bc_2(P_3)&=\frac{580}{321787}(-1,-1,-1,1,1,1,1,2).
\end{align*}
Note that in these three examples, the $\Bc_k(P_i)$ we computed
are colinear. This is actually true for all $k$ and is because
while $P_i$ is not Batyrev--Selivanova symmetric, there is a 1-dimensional
subspace that is invariant under $\Aut P$. We explain this in more detail in \cite{JLR24}. 
\end{example}
\appendix
\section{On a result of Blum--Jonsson and Rubinstein--Tian--Zhang}

\bigskip

{\bf\large Yaxiong Liu\footnote{University of Maryland, yxliu238@umd.edu}}

\bigskip

Blum--Jonsson computed explicitly the $\delta$-invariant of toric 
varieties \cite[Corollary 7.16]{BJ20} using a valuative approach.
In the Fano case, Rubinstein--Tian--Zhang gave an alternative proof of their result 
and also extended it to the $\delta_k$-invariants
by using an analytic approach with balanced metrics and coercivity
arguments \cite[Corollary 7.1]{RTZ21}.
The goal of this appendix is to give an alternative proof of the
Rubinstein--Tian--Zhang result and simultaneously extend it to any polarization, by showing that (as expected) the original valuative proof
of Blum--Jonsson for $\delta$ adapts to handle also $\delta_k$.

We follow the valuative reformulation of the $\delta$-invariant due to Blum--Jonsson.
Let $(X,L)$ be a polarized variety. 
A \textit{valuation} on $X$ means a real valuation $w:\CC(X)^*\rightarrow\RR$ that is trivial on $\CC$. We denote $\Val_X$ the space of all valuations on $X$.
A \textit{prime divisor} $E$ \textit{over} $X$, meaning that $E$ is a prime divisor on some birational model $Y$ of $X$, defines a valuation $\ord_E:\CC(X)^*\rightarrow\RR$ in $\Val_X$ as vanishing order at the generic point of $E$. 
For any prime divisor $E$ over $X$, the \textit{log discrepancy} of $\ord_E$ is defined by $A_X(\ord_E):=1+\ord_E(K_{Y/X})$.
We assume that $X$ has klt singularities.
As showed in \cite{BFFU11}, the log discrepancy can be extended to a lower semicontinuous function $A_X:\Val_X\rightarrow[0,+\infty]$ that is homogeneous of order $1$, i.e., $A_X(tw)=tA_X(w)$ for $t\in\RR_+$.

\begin{definition}
\label{def filtration}
A \textit{filtration} $\mathcal{F}R_{\bullet}$ of the graded $\mathbb{C}$-algebra $R=\oplus^{\infty}_{k=0}R_k=\oplus^{\infty}_{k=0}H^0(X,kL)$ consists of a family of subspace $\{\mathcal{F}^{\lambda}R_k \}_{\lambda}$ of $R_k$, for $\lambda\in\mathbb{R}$ and $k\in\mathbb{Z}_{+}$, satisfying:
		\begin{enumerate}[(i)]
			\item (decreasing) $\mathcal{F}^{\lambda}R_k \subset\mathcal{F}^{\lambda^{\prime}}R_k$ if $\lambda\geq \lambda^{\prime}$;
			\item (left-continuous) $\mathcal{F}^{\lambda}R_k=\cap_{\lambda^{\prime}<\lambda} \mathcal{F}^{\lambda^{\prime}}R_k$;
			\item (multiplicative) $\mathcal{F}^{\lambda}R_k\cdot\mathcal{F}^{\lambda^{\prime}}R_{k^{\prime}}
			\subset\mathcal{F}^{\lambda+\lambda^{\prime}}R_{k+k^{\prime}}$
			for any $\lambda,\lambda^{\prime}\in\mathbb{R}$ and $k,k^{\prime}\in\mathbb{Z}_{+}$;
			\item (linearly bounded)
            $\mathcal{F}^{0}R_k=R_k$ and
			 $\mathcal{F}^{\lambda}R_k=0$ for $\lambda>>0$.
          \end{enumerate}
\end{definition}

Any valuation $w\in\Val_X$ induces a filtration $\cF_w$ on the section ring $R(X,L)$ via
\begin{equation*}
    \cF_w^\lambda R_k:=\{s\in R_k~:~v(s)\geq \lambda\}
\end{equation*}
for $k\in\NN$ and $\lambda\in\RR_+$, where $R_k=H^0(X,kL)$.
The jumping numbers of the filtration $\cF_w$ are
\begin{equation}
\label{def jumping number}
    a_{k,j}:=\inf\{t\in\RR_+~:~\codim\cF^tR_k\geq j\},
    \quad 1\leq j\leq N_k.
\end{equation}
Their average is 
\begin{equation}
\label{S_k def}
    S_k(w):=\frac{1}{kN_k}\sum_{j} a_{k,j}
\end{equation}
where $N_k:=h^0(X,kL)$.
The \textit{expected vanishing order} is 
\begin{equation*}
    S(w):=\frac{1}{\Vol(L)}\int_0^\infty \Vol(L;w\geq \lambda)d\lambda
\end{equation*}
where
$    \Vol(L;w\geq \lambda):=\lim_{k\rightarrow\infty}\frac{n!}{k^n}\dim\cF_w^{\lambda k}R_k,$
and  
$
    S(w)=\lim_{k\rightarrow\infty}S_k(w)
$
\cite[Theorem 2.11]{BC11}.

The $\delta_k$- and $\delta$-invariants can be expressed using valuations:
\begin{equation*}
    \delta_k(L)=\inf_w\frac{A_X(w)}{S_k(w)}, \quad
    \delta(L)=\lim_{k\rightarrow\infty}\delta_k(L)=\inf_w\frac{A_X(w)}{S(w)},
\end{equation*}
where $w$ ranges over nontrivial valuations on $X$ with $A_X(w)<\infty$
\cite[Proposition 4.2, Theorem~4.4]{BJ20}.

Blum--Jonsson's proved that for polarized toric variety $(X,L)$ \cite[Corollary 7.16]{BJ20}, 
\begin{align}
\label{toric delta}
	\delta(L)
	=\inf_{w\in {\Val_X}}\frac{A_X(w)}{S(w)}
        =\inf_{v\in N_\RR\setminus\{0\}}\frac{A_X(v)}{S(v)}
	=\min_{i=1,\ldots,d}\frac{1}{\langle\Bc(P),v_i\rangle+b_i}.
\end{align}
They used the following proposition, reducing the above formula to the toric valuation $v\in N_\RR\setminus\{0\}$:
\begin{proposition}
\label{Prop of BJ}
If $w\in \Val_X$ with $A_X(w)<\infty$, then there exists $v\in N_\RR\backslash \{0\}$ such that
	$
	A(v)\leq A(w)$ and  $S(v)\geq S(w)$.
\end{proposition}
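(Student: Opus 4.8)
The plan is to deform $w$, using the torus action on $X$, to a torus-invariant valuation, for which both the log discrepancy and the expected vanishing order are computed directly from the moment polytope $P=P_L$. As in the infimum defining $\delta_k$ and $\delta$, I take $w$ to be nontrivial (the trivial valuation is excluded); then $S(w)>0$, a fact I will use at the end.

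Two structural facts drive the reduction. First, because the polarization $L$ and the canonical class $K_X$ are canonically linearized for the $(\CC^\star)^n$-action on $X$, both $A_X$ and $w\mapsto S(w)$ are invariant under the induced action of $T:=(\CC^\star)^n$ on $\Val_X$. Second, $A_X$ is lower semicontinuous on $\Val_X$ \cite{BFFU11}. Given $w$, I would pick a sufficiently general one-parameter subgroup $\gamma\colon\CC^\star\to T$ and form the degeneration $w_0:=\lim_{s\to 0}(\gamma(s))_*w$. One must check that this limit exists in $\Val_X$, has finite log discrepancy, and is invariant under the whole torus $T$: genericity of $\gamma$ forces the limiting filtration on each graded piece $R_k=H^0(X,kL)$ to be homogeneous for the torus weights (an ``initial'' degeneration, in the Gröbner sense, of $\cF_w$ on $R_k$), which is exactly the statement that $w_0$ is a toric valuation; and by the classification of $T$-invariant valuations on a complete toric variety, $w_0$ is the monomial valuation attached to a unique $v\in N_\RR$. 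One could equivalently first reduce to divisorial $w=\ord_E$ by density and semicontinuity, and then degenerate $E$ to a torus-invariant divisor, but the valuation-level formulation is cleaner to state.

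Granting this, the two required inequalities follow. Every translate $(\gamma(s))_*w$ has $A_X$ equal to $A_X(w)$ by $T$-invariance, so lower semicontinuity gives $A_X(v)=A_X(w_0)\le\liminf_{s\to 0}A_X((\gamma(s))_*w)=A_X(w)$. Similarly $S((\gamma(s))_*w)=S(w)$ for all $s\ne 0$, and passing to the limit does not decrease the jumping numbers $a_{k,j}$ of the induced filtration on $R_k$, so $\Vol(L;w_0\ge\lambda)\ge\Vol(L;w\ge\lambda)$ for every $\lambda$ and hence $S(v)=S(w_0)\ge S(w)$. Finally $v\ne 0$: the valuation attached to $v=0$ is trivial, with $S(0)=0<S(w)\le S(v)$. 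This produces $v\in N_\RR\setminus\{0\}$ with $A_X(v)\le A_X(w)$ and $S(v)\ge S(w)$; in particular $A_X(v)/S(v)\le A_X(w)/S(w)$, which is how the proposition is used to reduce the infimum defining $\delta$ (and, in the analogous argument, $\delta_k$) to toric valuations.

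I expect the main obstacle to be the monotonicity $S(w_0)\ge S(w)$ under the degeneration, i.e.\ making rigorous that taking the limit of the filtrations $\{\cF_{(\gamma(s))_*w}\}$ does not decrease the successive minima on any $R_k$; this is a Hilbert-function / Newton--Okounkov-body comparison for an initial degeneration of graded algebras, and the subtlety is that the limit in $\Val_X$ is ``upper-continuous'' for vanishing orders so that $\cF_{w_0}^\lambda R_k$ can genuinely be larger than the Grassmannian limit of the $\cF_{(\gamma(s))_*w}^\lambda R_k$. A secondary technical point is that the degeneration really lands in $\Val_X$ with $A_X(w_0)<\infty$, rather than producing only an abstract filtration; here one invokes the compactness of $\{A_X\le C\}\subset\Val_X$ together with the toric description of that set.
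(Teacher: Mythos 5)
Your strategy---degenerate the valuation $w$ itself along a generic one-parameter subgroup of the torus and compare $A_X$ and $S$ across the limit---is genuinely different from the argument this proposition actually rests on. The statement is quoted from \cite[Proposition 7.14]{BJ20}, and the machinery the appendix sets up (the graded base ideals $\ssb_\bullet(\cF)$, Lemma \ref{inclusion for all real}, Corollary \ref{modified cor}) reflects that proof: one never takes a limit in $\Val_X$. Instead, from $w$ one forms $\ssb_\bullet(\cF_w)$, normalizes so that $w(\ssb_\bullet)\geq 1$, uses the toric structure to produce a \emph{monomial} valuation $v\in N_\RR\setminus\{0\}$ with $A_X(v)/v(\ssb_\bullet)\leq A_X(w)/w(\ssb_\bullet)$ (log canonical thresholds of graded sequences of ideals on a toric variety are computed by monomial valuations, via degeneration of honest coherent ideals to monomial ideals), and then the filtration inclusion of Lemma \ref{inclusion for all real} yields $S(v)\geq v(\ssb_\bullet)\,S(\cF_w)\geq S(w)$. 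The only degeneration involved is of finitely many ideal sheaves, where the behavior of discrepancies and colengths is classical.

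The concrete gap in your version is the identification of the limit object with a point of $N_\RR$. What the initial (Gr\"obner) degeneration of $\cF_w$ along a generic $\gamma$ produces is a $T$-homogeneous, multiplicative, linearly bounded \emph{filtration} of $R(X,L)$: multiplicativity only gives superadditivity $w_0(fg)\geq w_0(f)+w_0(g)$, so $w_0$ need not be a valuation, and the induced function $u\mapsto w_0(\chi^u)$ on $M$ is only concave, not linear. (Your appeal to the classification of $T$-invariant valuations is correct as far as it goes---a genuine $T$-invariant valuation is monomial, since $u\mapsto w_0(\chi^u)$ is then additive---but it does not apply because the limit is not known to be a valuation; already for $w=\ord_E$ the flat limit of $E$ under $\gamma$ can be reducible or non-reduced and carries no canonical single valuation.) Since $A_X$ is defined only on $\Val_X$, the lower-semicontinuity step $A_X(w_0)\leq A_X(w)$ has no meaning for this limit filtration, and the argument collapses exactly where the linear functional $v$ is supposed to appear. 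By contrast, the half you flag as the main obstacle is the easier one: a Gr\"obner degeneration preserves $\dim\cF^\lambda R_k$ and hence the jumping numbers, so $S$ does not drop. To extract a linear $v$ from the concave limit data one needs an additional minimization, and that is precisely what the lct-of-base-ideals device of Blum--Jonsson accomplishes.
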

We now explain how to modify Blum--Jonsson's argument to derive a similar conclusion for $S_k$ 
(Proposition \ref{modified prop}).
For $\lambda\in\RR_+$ and $k\in \NN$,
\begin{align*}
	\ssb_{\lambda,k}(\cF)
	:=\kb(|\cF^\lambda R_k|),
\end{align*} 
where $\kb(|\cF^\lambda R_k|)\subset \cO_X$ is the \textit{base ideal} of the linear series $|\cF^\lambda R_k|$  \cite[Definition 1.1.8]{Laz04} defined as the image of the map $|\cF^\lambda R_k|\otimes_\CC L^*\rightarrow\cO_X$ determined by the evaluation of sections in $|\cF^\lambda R_k|$ 
\begin{align*}
    \cF^\lambda R_k\otimes_\CC\cO_X\rightarrow L.
\end{align*}

The sequence $\ssb_{\lambda,k}(\cF)$ is stationary, i.e., $\ssb_{\lambda,k}(\cF)=\ssb_{\lambda,k+1}(\cF)$ for $k$ large enough \cite[Lemma 3.17]{BJ20}, thus one set $\ssb_\lambda(\cF):=\ssb_{\lambda,k}(\cF)$ for large $k$.
One has \cite[Corollary 3.18]{BJ20},
\begin{align*}
	\ssb_\lambda(\cF)\cdot\ssb_{\lambda'}(\cF)
	\subset \ssb_{\lambda+\lambda'}(\cF),
\end{align*}
so $	w(\ssb_{\lambda+\lambda'}(\cF))
	\leq w(\ssb_\lambda(\cF)) +w(\ssb_{\lambda'}(\cF))
$ for any $\lambda,\lambda'\in\RR_+$ and any $w\in\Val_X$.

Recall the classical Fekete Lemma:
\begin{lemma}
{\rm\cite[Section 2]{Fek23}, \cite[Theorem 7.6.1]{HP48}}
\label{Fekete Lem}
    For any measurable subadditive function $f:(0,\infty)\rightarrow\RR$, the limit $\lim_{\lambda\rightarrow\infty}f(\lambda)/\lambda$ exists and is equal to $\inf_{\lambda>0}f(\lambda)/\lambda$ $($This limit may be $-\infty$$)$.
\end{lemma}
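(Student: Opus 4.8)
The plan is to run the classical two-stage proof of Fekete's lemma: first use measurability to control the local behaviour of $f$, then finish by a Euclidean-division estimate. Write $L:=\inf_{\lambda>0}f(\lambda)/\lambda\in[-\infty,+\infty)$. One inequality is free: since $f(\lambda)/\lambda\ge L$ for every $\lambda>0$, we get $\liminf_{\lambda\to\infty}f(\lambda)/\lambda\ge L$, so the task reduces to proving $\limsup_{\lambda\to\infty}f(\lambda)/\lambda\le L$ (and when $L=-\infty$ even this reduces to the same estimate).

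First I would extract from measurability that $f$ is bounded above on some open interval. Writing $(1,2)=\bigcup_{m\in\NN}\{t:f(t)\le m\}$, some $A:=\{t\in(1,2):f(t)\le m\}$ has positive Lebesgue measure; by the Steinhaus-type fact that $A+A$ contains an interval (for instance because $1_A*1_A$ is continuous with strictly positive integral $|A|^2$, hence positive on an open set), every $x$ in some interval $(a,b)\subset(2,4)$ splits as $x=t_1+t_2$ with $t_i\in A$, whence $f(x)\le f(t_1)+f(t_2)\le 2m=:C$. Iterating subadditivity gives $f\le nC$ on the $n$-fold sumset $(na,nb)$; picking $N>a/(b-a)$, the intervals $(na,nb)$ with $n\ge N$ overlap and their union is $(Na,\infty)$, so $f$ is bounded above on every \emph{bounded} subinterval of $(Na,\infty)$. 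This is all I will need; note that $f$ need not be bounded above near $0$, as $f(t)=1/t$ already shows.

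With that in hand the endgame is routine. Fix $\mu>0$. For $\lambda$ large choose an integer $n$ with $r:=\lambda-n\mu\in[Na+\mu,\,Na+2\mu]$ (possible since this interval has length $\mu$); then $n\to\infty$, $n/\lambda\to1/\mu$, and $r$ ranges in a fixed bounded interval $J\subset(Na,\infty)$ on which $\sup_J f=:C_J<\infty$ by Step 1. Iterated subadditivity gives $f(\lambda)\le f(n\mu)+f(r)\le nf(\mu)+C_J$, so $f(\lambda)/\lambda\le(n/\lambda)f(\mu)+C_J/\lambda\to f(\mu)/\mu$, i.e.\ $\limsup_{\lambda\to\infty}f(\lambda)/\lambda\le f(\mu)/\mu$; taking the infimum over $\mu>0$ gives $\limsup_{\lambda\to\infty}f(\lambda)/\lambda\le L$ (valid verbatim when $L=-\infty$), and combined with the trivial bound this yields $\lim_{\lambda\to\infty}f(\lambda)/\lambda=L=\inf_{\lambda>0}f(\lambda)/\lambda$. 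The only genuinely nontrivial step — and where I expect any difficulty — is the first one, namely passing from ``measurable and subadditive'' to ``bounded above on an interval''; this is the standard Steinhaus/convolution trick, and everything after it is elementary bookkeeping with subadditivity and limits.
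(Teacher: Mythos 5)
The paper does not prove this lemma itself — it only cites Fekete and Hille--Phillips — and your argument is precisely the standard proof given in those references: Steinhaus/convolution to get an upper bound for $f$ on an interval far out, then the Euclidean-division estimate. Both steps are carried out correctly (including the $L=-\infty$ case), so the proposal is complete and matches the classical approach.
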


\begin{definition}
\label{def of valu of graded ideal}
    The valuation of the graded sequence of base ideals $\ssb_\bullet(\cF)$ of a filtration is defined as
\begin{equation}
\label{valuation for real index}
	w(\ssb_\bullet(\cF))
	:=\lim_{\lambda\rightarrow\infty}\frac{w(\ssb_\lambda(\cF))}{\lambda}
	=\inf_{\lambda\in\RR_+}\frac{w(\ssb_\lambda(\cF))}{\lambda},
\end{equation}
where $w\in\Val_X$.
Due to Lemma \ref{Fekete Lem}, it is well-defined.
\end{definition}

\begin{lemma}
\label{inclusion for all real}
	If $w\in\Val_X$ with $w(\ssb_\bullet(\cF))\geq1$, then $\cF^\lambda R_k\subset \cF_w^\lambda R_k$ for any $\lambda\in\RR_+$ and all $k\in\NN$.
\end{lemma}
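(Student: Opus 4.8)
The plan is to first unwind the hypothesis into a uniform lower bound on the stabilized base ideals, and then to compare a section of $\cF^{\lambda}R_k$ with the base ideal it generates at the center of $w$. For the first step, by Definition \ref{def of valu of graded ideal} together with Lemma \ref{Fekete Lem},
$$
w\big(\ssb_\bullet(\cF)\big)=\inf_{\mu\in\RR_+}\frac{w(\ssb_\mu(\cF))}{\mu}\ \geq\ 1,
$$
so the hypothesis says precisely that $w(\ssb_\mu(\cF))\geq\mu$ for every $\mu\in\RR_+$. It then suffices to prove $w(s)\geq\lambda$ for every nonzero $s\in\cF^{\lambda}R_k$ and all $\lambda\in\RR_+$, $k\in\NN$; indeed this forces $s\in\cF_w^{\lambda}R_k$ by the definition of $\cF_w$, and the zero section is trivially in $\cF_w^{\lambda}R_k$.

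Next I would carry out the local comparison. Let $\xi$ be the center of $w$ on $X$ (which exists since $X$ is proper), and fix a local trivialization of $\cO_X(kL)$ near $\xi$; under it a nonzero $s\in\cF^{\lambda}R_k$ is represented by a nonzero regular function $f_s\in\cO_{X,\xi}$, and by the very definition of the base ideal $\ssb_{\lambda,k}(\cF)=\kb(|\cF^{\lambda}R_k|)$ the germ of $f_s$ lies in the stalk $\ssb_{\lambda,k}(\cF)_\xi$. Changing the trivialization multiplies $f_s$ by a unit of $\cO_{X,\xi}$, which has $w$-value $0$, so $w(s):=w(f_s)$ is unambiguous, and since $f_s$ is an element of the ideal $\ssb_{\lambda,k}(\cF)_\xi$ one gets $w(s)\geq w(\ssb_{\lambda,k}(\cF))$. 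It remains to replace $\ssb_{\lambda,k}(\cF)$ by the stabilized ideal $\ssb_{\lambda}(\cF)$ that the hypothesis controls: the ideals $\ssb_{\lambda,k}(\cF)$ increase with $k$ to their stationary value $\ssb_{\lambda}(\cF)$ (cf.\ \cite[Lemma 3.17]{BJ20}), so $\ssb_{\lambda,k}(\cF)\subseteq\ssb_{\lambda}(\cF)$ and hence $w(s)\geq w(\ssb_{\lambda}(\cF))\geq\lambda$, finishing the argument.

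If one prefers to use only stationarity and not monotonicity of $\ssb_{\lambda,k}(\cF)$ in $k$, there is a robust substitute via a power/Veronese trick: for $m\geq1$, multiplicativity of the filtration (Definition \ref{def filtration}(iii)) gives $s^m\in\cF^{m\lambda}R_{mk}$, and since $L$ is ample the bundle $mkL$ is basepoint free for $m\gg0$, which forces $\ssb_{m\lambda,mk}(\cF)\subseteq\ssb_{m\lambda}(\cF)$; applying the previous paragraph to $s^m$ then yields $m\,w(s)=w(s^m)\geq w(\ssb_{m\lambda}(\cF))\geq m\lambda$, so $w(s)\geq\lambda$ after dividing by $m$. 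The one place needing genuine care — and the main obstacle — is precisely this passage between the level-$k$ base ideal $\ssb_{\lambda,k}(\cF)$ that a single section sees and the stabilized ideal $\ssb_{\lambda}(\cF)$ that the hypothesis bounds; by contrast the Fekete unwinding, the reading of $w(s)$ off a local trivialization, and the multiplicativity step are all routine.
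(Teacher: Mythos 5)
Your argument is correct and is essentially the paper's own proof: both unwind the hypothesis via the infimum characterization \eqref{valuation for real index} to get $w(\ssb_\lambda(\cF))\ge\lambda$, invoke $\ssb_{\lambda,k}(\cF)\subset\ssb_\lambda(\cF)$ from \cite[Lemma 3.17]{BJ20}, and conclude that every section of $\cF^\lambda R_k$ has $w$-value at least $\lambda$. Your local-trivialization paragraph is just the spelled-out form of the paper's one-line inclusion $\cF^\lambda R_k\subset H^0(X,kL\otimes\ssb_{\lambda,k}(\cF))\subset H^0(X,kL\otimes\ka_\lambda(w))=\cF_w^\lambda R_k$, and the Veronese variant is an unneeded but harmless extra.
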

In \cite[Lemma 3.20]{BJ20}, the authors showed the inclusion for all $\lambda\in\NN$. By modifying the definition of the valuation of the graded base ideals, we also obtain the same conclusion for all $\lambda\in\RR_+$.
\begin{proof}
	By definition (\ref{valuation for real index}), we have
	$1\leq w(\ssb_\bullet(\cF))\leq w(\ssb_\lambda(\cF))/\lambda$ for all $\lambda>0$.
	Then $w(\ssb_\lambda(\cF))\geq\lambda$, so that $\ssb_\lambda(\cF)\subset\ssa_\lambda(w)$ where
$		\ssa_\lambda(w):=\{f\in\cO_X~|~w(f)\geq\lambda \}$.
	Since we have $\ssb_{\lambda,k}(\cF)\subset \ssb_\lambda(\cF)$ \cite[Lemma 3.17]{BJ20}, 
	$	\cF^\lambda R_k
		\subset H^0(X,kL\otimes\ssb_{\lambda,k}(\cF))
		\subset H^0(X,kL\otimes\ka_\lambda(w))
		=\cF_w^\lambda R_k.
	$
 \end{proof}
Next, we adapt \cite[Corollary 3.21]{BJ20}:
\begin{corollary}
\label{modified cor}
	Let $\cF$ be a linearly bounded filtration of $R(X,L)$. Then,
 	for all $w\in\Val_X, k\in\NN$,
	\begin{align*}
		S_k(w)\geq w(\ssb_\bullet(\cF))S_k(\cF).
	\end{align*}
\end{corollary}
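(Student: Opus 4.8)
The plan is to reduce everything to Lemma \ref{inclusion for all real} via a rescaling (homogeneity) argument, and then to convert an inclusion of filtrations into a comparison of jumping numbers. First I would dispose of the degenerate case: since $w$ is nonnegative on $\cO_X$, each $w(\ssb_\lambda(\cF))\geq0$, hence $c:=w(\ssb_\bullet(\cF))\geq0$ by \eqref{valuation for real index}; if $c=0$ the asserted inequality is immediate because $S_k(w)\geq0$. So assume $c>0$ and set $w':=c^{-1}w\in\Val_X$. Because the valuation of an ideal is the minimum of the valuations of its generators and valuations scale linearly, $(tw)(\ssb_\lambda(\cF))=t\,w(\ssb_\lambda(\cF))$ for $t>0$; passing to the limit in \eqref{valuation for real index} shows $w\mapsto w(\ssb_\bullet(\cF))$ is homogeneous of degree $1$, so $w'(\ssb_\bullet(\cF))=1\geq1$.

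Next I would apply Lemma \ref{inclusion for all real} to $w'$: it gives $\cF^\lambda R_k\subseteq\cF_{w'}^\lambda R_k$ for every $\lambda\in\RR_+$ and every $k\in\NN$. A subspace inclusion increases codimension, so $\codim_{R_k}\cF^\lambda R_k\geq\codim_{R_k}\cF_{w'}^\lambda R_k$; consequently $\{t:\codim\cF_{w'}^tR_k\geq j\}\subseteq\{t:\codim\cF^tR_k\geq j\}$ for each $j$, and taking infima in \eqref{def jumping number} yields $a_{k,j}(\cF)\leq a_{k,j}(\cF_{w'})$ for all $j$. Summing over $j$ and dividing by $kN_k$ (the same normalization on both sides, since $N_k=h^0(X,kL)$ depends only on $L$) gives $S_k(\cF)\leq S_k(w')$.

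Finally, the homogeneity of $S_k$ in the valuation finishes the argument: since $\cF_{tw}^\lambda R_k=\cF_w^{\lambda/t}R_k$, the jumping numbers scale as $a_{k,j}(\cF_{tw})=t\,a_{k,j}(\cF_w)$, so $S_k(tw)=t\,S_k(w)$; thus $S_k(w')=c^{-1}S_k(w)$, and combining with the previous step we get $S_k(w)=c\,S_k(w')\geq c\,S_k(\cF)=w(\ssb_\bullet(\cF))\,S_k(\cF)$, as claimed. I do not expect a genuine obstacle here — the substance is entirely in Lemma \ref{inclusion for all real} (already established) together with the two elementary homogeneity facts; the one point needing care is exactly why the modified Definition \ref{def of valu of graded ideal} was introduced, namely that $w(\ssb_\bullet(\cF))$ be linear in $w$ with the limit taken over all real $\lambda\in\RR_+$ rather than only $\lambda\in\NN$. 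If anything is delicate it is bookkeeping: tracking the correct direction of the jumping-number comparison (smaller filtration piece $\Rightarrow$ larger codimension $\Rightarrow$ smaller jumping number) and verifying that the $kN_k$ normalization is identical for $\cF$ and for $\cF_{w'}$.
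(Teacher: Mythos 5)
Your proposal is correct and follows essentially the same route as the paper: reduce by homogeneity to the normalized case $w(\ssb_\bullet(\cF))=1$, invoke Lemma \ref{inclusion for all real} to get $\cF^\lambda R_k\subseteq\cF_w^\lambda R_k$, and compare jumping numbers termwise before summing. You merely make explicit the two homogeneity facts (linearity of $w\mapsto w(\ssb_\bullet(\cF))$ and of $S_k$ in the valuation) that the paper compresses into the phrase ``we may assume $w(\ssb_\bullet(\cF))=1$ by scaling $w$''.
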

\begin{proof}
	If $w(\ssb_\bullet(\cF))=0$, there is nothing to show.
	If not, we may assume $w(\ssb_\bullet(\cF))=1$ by scaling $w$.
	By Lemma \ref{inclusion for all real}, we have $\cF^\lambda R_k\subset \cF_w^\lambda R_k$ for all $\lambda>0$. Thus, the jumping numbers of the filtration $\cF_w^{\bullet} R_k$ is greater than equal to the jumping numbers of the filtration $\cF^\bullet R_k$ (see (\ref{def jumping number})). 
    Since $S_k(\cF)$ is the sum of jumping numbers of the filtration $\cF$, we obtain
	$
		S_k(\cF)\leq S_k(\cF_w)=S_k(w).
	$ 
\end{proof}

Once we have Corollary \ref{modified cor}, then we can run the proof of \cite[Proposition 7.14]{BJ20} and obtain:
\begin{proposition}
\label{modified prop}
    For a nontrivial $w\in\Val_X$ with $A_X(w)<\infty$, then there exists $v\in\NN_\RR\backslash\{0\}$ with 
$
	A(v)\leq A(w)
 $
 and
 $S_k(v)\geq S_k(w)$
   \hbox{ for all $k\in\NN$}.

\end{proposition}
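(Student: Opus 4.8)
The plan is to follow Blum--Jonsson's proof of \cite[Proposition 7.14]{BJ20} --- the argument underlying Proposition \ref{Prop of BJ} --- essentially verbatim, with one substitution: at the single point where that proof invokes \cite[Corollary 3.21]{BJ20} (the $S$-version) we instead invoke Corollary \ref{modified cor} (the $S_k$-version). Every other ingredient of their argument concerns only log discrepancies, graded sequences of base ideals, and the toric structure of $X$, and is insensitive to whether one works with $S$ or with $S_k$.

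In detail: fix a nontrivial $w\in\Val_X$ with $A_X(w)<\infty$ and let $\cF=\cF_w$ be the induced filtration of $R(X,L)$; since $A_X(w)<\infty$ this filtration is linearly bounded, and by construction $S_k(\cF)=S_k(w)$ for every $k$. Pass to the associated graded sequence of base ideals $\ssb_\bullet:=\ssb_\bullet(\cF)$, which is well defined by the stationarity \cite[Lemma 3.17]{BJ20} and multiplicative, $\ssb_\lambda\cdot\ssb_{\lambda'}\subseteq\ssb_{\lambda+\lambda'}$, as recalled above; it is nontrivial because $w$ is. Now carry out the toric-reduction step of Blum--Jonsson's proof: using only that $X$ is toric together with the lower semicontinuity and homogeneity of $A_X$, one produces a toric valuation $v\in N_\RR\setminus\{0\}$ with
$$A_X(v)\le A_X(w)\qquad\text{and}\qquad v(\ssb_\bullet)\ge 1.$$
(In outline: the linearly bounded filtration $\cF_w$ has a well-defined concave transform on $P$; one degenerates $\ssb_\bullet$ through $T$-invariant, hence monomial, graded sequences of ideals, which correspond to nested families of lattice polytopes inside the dilates $kP$; a supporting affine functional of that family gives the candidate $v\in N_\RR$, and one checks, exactly as in \cite{BJ20}, that passing to the toric degeneration does not raise the log discrepancy and that $v(\ssb_\bullet)\ge1$.)

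It remains to feed this $v$ and the filtration $\cF=\cF_w$ into Corollary \ref{modified cor}: for every $k\in\NN$,
$$S_k(v)\ \ge\ v(\ssb_\bullet(\cF_w))\,S_k(\cF_w)\ \ge\ 1\cdot S_k(w)\ =\ S_k(w),$$
which, together with $A_X(v)\le A_X(w)$ and $v\ne 0$, is precisely the assertion of the proposition. The main obstacle is the toric-reduction step: one cannot simply take the monomial valuation defined by $\langle\cdot,v\rangle|_M = w(\chi^{(\cdot)})$, since that only gives the reverse inequality $v(\ssb_\bullet)\le w(\ssb_\bullet)$; one genuinely needs Blum--Jonsson's degeneration argument, which we borrow unchanged. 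The new point is that because $\ssb_\bullet(\cF_w)$ and $v$ are both independent of $k$ and Corollary \ref{modified cor} holds for all $k$ simultaneously, the inequality $S_k(v)\ge S_k(w)$ holds uniformly in $k$ --- not merely asymptotically as for $S$ in \cite{BJ20}.
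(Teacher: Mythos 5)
Your proposal is correct and is essentially identical to the paper's own proof, which likewise consists of the single sentence ``run the proof of \cite[Proposition 7.14]{BJ20} with Corollary \ref{modified cor} in place of \cite[Corollary 3.21]{BJ20}''; your extra detail on the toric-reduction step producing $v$ with $A_X(v)\le A_X(w)$ and $v(\ssb_\bullet)\ge 1$ is an accurate gloss on what that borrowed argument does.
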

Thus, we obtain the following extension of the result of 
Rubinstein--Tian--Zhang (that assumed $L=-K_X$ so $b_i=1$ for all $i$):
\begin{corollary}
\label{toric delta_k}
For a polarized toric variety $(X,L)$ and $k\in\NN$,
\begin{align}
\label{toric delta_k formula}
	\delta_k(L)
       =\min_{i=1,\ldots,d}\frac{1}{\langle \Bc_k(P),v_i\rangle+b_i}.
\end{align}
\end{corollary}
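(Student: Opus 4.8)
The plan is to derive the formula from the valuative description of the $\delta_k$-invariant recalled above, namely $\delta_k(L)=\inf_w A_X(w)/S_k(w)$ over nontrivial $w\in\Val_X$ with $A_X(w)<\infty$, by first cutting down to toric valuations and then evaluating both invariants on the rays of the fan. The reduction is exactly what Proposition \ref{modified prop} provides: given such a $w$, it supplies $v\in N_\RR\setminus\{0\}$ with $A_X(v)\le A_X(w)$ and $S_k(v)\ge S_k(w)$, hence $A_X(v)/S_k(v)\le A_X(w)/S_k(w)$; conversely any nonzero $v\in N_\RR$ is itself a nontrivial valuation with $A_X(v)<\infty$. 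Therefore
\[
\delta_k(L)=\inf_{v\in N_\RR\setminus\{0\}}\frac{A_X(v)}{S_k(v)},
\]
the infimum now over toric (monomial) valuations only.

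Next I would evaluate $A_X$ and $S_k$ on toric valuations. Since $(X,L)$ is polarized, $X$ is projective, so its fan is complete and every $v\in N_\RR\setminus\{0\}$ lies in some cone; writing $v=\sum_j\lambda_j v_{i_j}$ as the unique non-negative combination of the primitive generators of the minimal cone containing $v$, one has $A_X(v)=\sum_j\lambda_j$, and in particular $A_X(\ord_{D_i})=1$ for each ray generator $v_i$, where $D_i$ is the corresponding torus-invariant prime divisor on $X$ (so that $K_{X/X}=0$). For $S_k$, the space $R_k=H^0(X,kL)$ is spanned by the torus-invariant sections $s_u$ indexed by $u\in kP\cap M$, and because $\ord_{D_i}$ is torus-invariant the induced filtration $\cF_{\ord_{D_i}}R_k$ is spanned by these monomials; as $s_u$ vanishes to order $\langle u,v_i\rangle+kb_i$ along $D_i$, the multiset of jumping numbers of $\cF_{\ord_{D_i}}R_k$ is $\{\langle u,v_i\rangle+kb_i:u\in kP\cap M\}$. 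Using $\#(kP\cap M)=h^0(X,kL)$ and $\sum_{u\in kP\cap M}u=k\,h^0(X,kL)\,\Bc_k(P)$,
\[
S_k(\ord_{D_i})=\frac{1}{k\,h^0(X,kL)}\sum_{u\in kP\cap M}\big(\langle u,v_i\rangle+kb_i\big)=\langle\Bc_k(P),v_i\rangle+b_i,
\]
which gives $\delta_k(L)\le A_X(\ord_{D_i})/S_k(\ord_{D_i})=(\langle\Bc_k(P),v_i\rangle+b_i)^{-1}$ for each $i$.

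For the reverse inequality I would bound $S_k$ on an arbitrary toric valuation $v=\sum_j\lambda_j v_{i_j}$ by the ray-generator values: arguing as above one obtains $S_k(v)\le\langle\Bc_k(P),v\rangle-\min_{u\in P}\langle u,v\rangle$ (with equality when $v$ is a ray generator, since then the facet $\langle\cdot,v_i\rangle=-b_i$ of $P$ is attained on $M$), and since $\min_{u\in P}\langle u,v\rangle\ge\sum_j\lambda_j\min_{u\in P}\langle u,v_{i_j}\rangle=-\sum_j\lambda_j b_{i_j}$ from the defining inequalities $\langle\cdot,v_{i_j}\rangle\ge-b_{i_j}$ of $P$, we get
\[
S_k(v)\le\sum_j\lambda_j\big(\langle\Bc_k(P),v_{i_j}\rangle+b_{i_j}\big)\le\Big(\sum_j\lambda_j\Big)\max_i\big(\langle\Bc_k(P),v_i\rangle+b_i\big)=A_X(v)\max_i S_k(\ord_{D_i}).
\]
Hence $A_X(v)/S_k(v)\ge\min_i(\langle\Bc_k(P),v_i\rangle+b_i)^{-1}$ for every toric $v$, and combining with the previous paragraph finishes the proof.

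The main obstacle is the bookkeeping behind the $S_k$ computation rather than any deep new idea: one must verify that the order of $s_u$ along $D_i$ is $\langle u,v_i\rangle+kb_i$, that these are genuinely the jumping numbers of the monomial filtration $\cF_{\ord_{D_i}}R_k$ (which uses torus-invariance of $\ord_{D_i}$, so that the filtration is spanned by monomials), and that the normalization of the monomial valuation $\ord_v$ on sections is the one making $S_k(v)\le\langle\Bc_k(P),v\rangle-\min_{u\in P}\langle u,v\rangle$ — a point essentially settled in Blum--Jonsson's treatment of $S$ and transferring verbatim to $S_k$ once Proposition \ref{modified prop} is available. Everything else is a transcription of Blum--Jonsson's toric computation of $\delta$ with $S$ replaced by $S_k$ and $\Bc$ by $\Bc_k$.
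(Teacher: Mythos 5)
Your proposal is correct and follows essentially the same route as the paper: the crux in both is the reduction to toric valuations via Proposition \ref{modified prop}, after which everything is the standard toric evaluation of $A_X$ and $S_k$. The only difference is presentational — the paper packages the endgame as $\delta_k(L)=\lct\big(D_{\Bc_k(P)}\big)$ by citing Blum--Jonsson's facts on $S_k(v)$, $v(D_u)$ and the toric lct formula, whereas you unpack that lct computation into the explicit two-sided estimate (computing $S_k(\ord_{D_i})$ from vanishing orders of monomial sections and bounding $A_X(v)/S_k(v)$ from below for general $v$ in a cone); the underlying computation is identical.
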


    Zhang \cite{Zh18} deduced an explicit formula of $\delta$-invariant in the rational Fano $T$-varieties of complexity $1$, generalizing Blum--Jonsson's result.
    By the same ideas as above, we can also modify Zhang's argument to obtain a similar formula, involving the weighted quantized barycenter of its associated polytope, of $\delta_k$-invariant for $T$-varieties of complexity $1$.

\begin{proof}
    By Proposition \ref{modified prop},
\begin{align}
\label{delta_k reduced to toric}
	\delta_k(L)=\inf_{v\in N_\RR\backslash\{0\}}\frac{A_X(v)}{S_k(v)}.
\end{align}
Recall the following facts due to Blum--Jonsson \cite{BJ20}:
\begin{enumerate}[(i)]
	\item $S_k(v)=\langle\Bc_k(P),v\rangle-\psi_L(v)$,
	where $\psi_L$ is the support function of $P$ \cite[Proposition 7.6]{BJ20}.
	\item  Write $L=\cO_X(D)$ and $D_u:=D+\sum_{i=1}^d\langle u,v_i\rangle D_i$ for $u\in P\cap M_\QQ$. Then $v(D_u)=\langle u,v\rangle-\psi_L(v)$ for $v\in N_\RR$ \cite[Lemma 7.1]{BJ20}.
	\item\label{BJ cor 7,3}  For $u\in P\cap M_\QQ$, 
 $		\lct(D_u)=\inf_{v\in N_\RR\backslash\{0\}}\frac{A_X(v)}{v(D_u)}
		=\min_{i=1,\ldots,d}\frac{1}{\langle u,v_i\rangle+b_i}
	$ \cite[Corollary 7.3]{BJ20}.
\end{enumerate}
Thus,
\begin{align*}	\delta_k(L)=&\inf_{v\in N_\RR\backslash\{0\}}\frac{A_X(v)}{S_k(v)} \text{\ \ \ \  (by (\ref{delta_k reduced to toric}))} \\
	=&\inf_{v\in N_\RR\backslash\{0\}}\frac{A_X(v)}{\langle\Bc_k(P),v\rangle-\psi_L(v)} \text{\ \ \ \ (by (i))}  \\
	=&\inf_{v\in N_\RR\backslash\{0\}}\frac{A_X(v)}{v(D_{\Bc_k(P)})} \text{\ \ \ \  (by (ii))}  \\
	=&\lct(D_{\Bc_k(P)}) \text{\ \ \ \  (by (iii))} \\
	=&\min_{i=1,\ldots,d}\frac{1}{\langle \Bc_k(P),v_i\rangle+b_i}
	\text{\ \ \ \  (by (iii))}
\end{align*}
as claimed.
\end{proof}

\bigskip
\textsc{University of Maryland}

\bigskip
{\tt cjin123@terpmail.umd.edu, yanir@alum.mit.edu}

\end{document}